\newtheorem{theorem}{Theorem}[section]
\newtheorem{proposition}[theorem]{Proposition}
\newtheorem{lemma}[theorem]{Lemma}
\newtheorem{corollary}[theorem]{Corollary}
\newtheorem{remark}{Remark}[section]
\newcommand{\no}{\noindent}
\newcommand{\nn}{\nonumber}
\newcommand{\oo}{\infty}
\newcommand{\ra}{\rightarrow}
\newcommand{\comp}{\circ}
\newcommand{\N}{\mathbb{N}}
\newcommand{\C}{\mathbb{C}}
\newcommand{\R}{\mathbb{R}}
\renewcommand\thetable{\thesection.\@arabic\c@table}
\title[Spectrum for operators of stochastic machines]{Spectrum of stochastic adding machines and fibered Julia sets}
\author{Ali Messaoudi$^1$, Olivier Sester$^2$, Glauco Valle$^3$}
\thanks{1. Supported by CNPq grant 305939/2009-2 and Fapesp Project 2011/23199-1.}
\thanks{2. Supported by  Fapesp Project	2007/06896-5.}
\thanks{3. Supported by CNPq grant, 307038/2009-2.}
\date{\today}
\address{
\newline
Ali Messaoudi
\newline
UNESP - Departamento de matem\'atica do Instituto de Bioci\^encias Letras e Ci\^encias Exatas de S\~ao Jos\'e do Rio Preto
\newline
e-mail: {\rm \texttt{messaoud@ibilce.unesp.br}}
\newline
\newline
Olivier Sester
\newline
LAMA - Universit\'e Paris-Est Marne-la-Vall\'ee, UMR CNRS 8050, France
\newline
e-mail: {\rm \texttt{olivier.sester@univ-mlv.fr}}
\newline
\newline
Glauco Valle
\newline
UFRJ - Departamento de m\'etodos estat\'{\i}sticos do Instituto de Matem\'atica.
\newline  Caixa Postal 68530, 21945-970, Rio de Janeiro, Brasil
\newline
e-mail: {\rm \texttt{glauco.valle@im.ufrj.br}}
}
\subjclass[2000]{primary 37A30, 37F50; secondary 47A10}
\keywords{Julia sets, Stochastic adding machines, Markov chains, Spectrum of transition operator}
\begin{document}

\maketitle

\begin{abstract}
Consider the basic algorithm to perform the transformation $n\mapsto n+1$ changing digits of the $d$-adic expansion of $n$ one by one. We obtain a family of Markov chains on the non-negative integers through sucessive and independent applications of the algorithm modified by a parametrized stochastic rule that randomly prevents one of the steps in the algorithm to finish. The objects of study in this paper are the spectra of the transition operators of these Markov chains. The spectra of these Markov chains turn out to be fibered Julia sets of fibered polynomials. This enable us to analyze their topological and analytical properties with respect to the underlying
parameters of the Markov chains.
\end{abstract}

\section{Introduction}
\label{sec:intro}

Binary representations of real numbers have many useful applications in science. One cares not only on how transformations on sets of real numbers can be described through their binary representations, but also on how these transformations can be performed algorithmically. The transformation that associate to a natural number its successor, adding one to the number, is one of the simplest to be described by binary representations. A basic algorithm to perform the transformation $n \mapsto n+1$, changing binary digits one by one, requires less than $\left\lfloor \log_2(n) \right\rfloor + 1$ steps. Killeen and Taylor \cite{kt} proposed a stochastic rule that randomly prevents one of the steps in the algorithm to finish resulting in a number smaller than $n+1$. Successive iterations of the Killeen and Taylor rule give rise to a Markov chain on $\mathbb{Z}_+ = \{0,1,2,3,...\}$ whose transition operator has important spectral properties. In particular, the spectrum of the transition operator in $l^{\infty}$ is equal to the filled-in Julia set of a quadratic map. Here we propose a generalization of the Killeen and Taylor machine and also of the results obtained in \cite{am} and \cite{kt}.

We extend these results in two directions. On the one hand, we consider not only binary representations but also $d$-adic expansions of the natural numbers. On the other hand, the stochastic rule we consider
is more general: the iteration of the adding algorithm is randomized through Bernoulli variables whose parameters change at each step (see below).

\medskip
Let us fix a positive integer $d \ge 2$.  Set
$$
\Gamma = \Gamma_d := \Big\{ (a_j)_{j=1}^{+\oo} \in \{0,...,d-1\}^\mathbb{N} : \sum_{j=1}^{+\oo} a_j < \infty \Big\} \, .
$$
There is a one to one map from $\mathbb{Z}_+$ to $\Gamma$ that associates to each $n$ a sequence $(a_{j}(n))_{j=1}^{+\oo}$ such that
$$
n = \sum_{j=1}^{+\oo} a_{j}(n) d^{j-1} \, .
$$
The right hand side of the previous equality is called the $d$-adic expansion of $n$ and $a_{j}(n)$ is called the $j$th digit of the expansion. The map $n \mapsto n+1$ operates on $\Gamma$ in the following way: we define the counter $\zeta_n = \zeta_{d,n} := \min\{j\ge 1:a_{j}(n) \neq d-1\} $ then
$$
a_{j}(n+1) = \left\{
\begin{array}{cl}
0 &, \ j< \zeta_{n} \\
a_{j}(n) + 1 &, \ j = \zeta_{n} \\
a_{j}(n) &, \ j>\zeta_{n} \, .
\end{array}
\right.
$$
So an adding machine algorithm, that maps $n$ to $n+1$ using $d$-adic expansions by changing one digit on each step, is performed in $\zeta_{n}$ steps in the following way: the first $\zeta_{n} -1$ digits are replaced by zero recursively and in $\zeta_{n}$th step we add one to the $\zeta_{n}$th digit (basically we are adding one modulus $d$ on each step). Note that $0\le \zeta_{n} \le \left\lfloor \log_d(n) \right\rfloor + 1$.

As an example consider $d=3$ and $n= 98 = 2 \cdot 3^0 + 2 \cdot 3^1 + 1 \cdot 3^2 + 1 \cdot 3^4$, then the adding machine algorithm is performed in $\zeta_{3,98} = 3$ steps as follows:
\begin{equation}
\label{AM98}
22101 \mapsto 02101 \mapsto 00101 \mapsto 00201 \, .
\end{equation}

\medskip

Now suppose that for each step of the adding machine algorithm, independently of any other step, there is a positive probability that the information about the counter is lost, thus making the algorithm  stop. This implies that the outcome of the adding machine is a random variable. We call this procedure the adding machine algorithm with fallible counter, or simply $\rm{AMFC}_d$ where $d$ represents the base.

Formally, we fix a sequence $(p_j)_{j=1}^{+\oo}$ of real numbers in $(0,1]$ and a sequence $(\xi_j)_{j=1}^{+\oo}$ of independent random variables such that $\xi_j$ is a Bernoulli distribution with parameter $p_j$. Define the random time $\tau = \inf\{ j : \xi_j = 0 \}$. Then the $\rm{AMFC}_d$ is defined by applying the adding machine algorithm to $n$ and stopping at the step $\tau$ if $\tau \le \zeta_{n}$ (this means that steps $j\ge \tau$ are not performed).

Let us return to the previous example with $d=3$ and $n=98$. If $\xi_1 = 1$ and $\xi_2=0$, which occurs with probability $p_1(1-p_2)$ then $\tau =2 < 3 = \zeta_{98}$. Thus the $\rm{AMFC}_3$ applied to $n=98$ stops just after step one giving an outcome of $2 \cdot 3 + 1 \cdot 3^2 + 1 \cdot 3^4 = 96$, see scheme in \ref{AM98}. Indeed the probability distribution of the outcome of the $\rm{AMFC}_3$ applied to $n=98$ is
$$
22101 \, (98) \mapsto \left\{
\begin{array}{cl}
22101 \, (98) \, , & \textrm{with probability } 1-p_1 \\
02101 \, (96) \, , & \textrm{with probability } p_1 (1-p_2) \\
00101 \, (90) \, , & \textrm{with probability } p_1 p_2 (1-p_3) \\
00201 \, (99) \, , & \textrm{with probability } p_1 p_2 p_3 \, .
\end{array}
\right.
$$

\smallskip

Now fix an initial, possibly random, state $X(0) \in \mathbb{Z}_+$. We apply recursively the $\rm{AMFC}_d$ to its successive outcomes starting at $X(0)$ and using independent sequences of Bernoulli random variables at different times. These random sequences are associated to the same fixed sequence of probabilities $(p_j)_{j=1}^{+\oo}$. In this way, we generate a discrete time-homogeneous Markov chain $(X(t))_{t \ge 0}$. This Markov chain is irreducible if and only if $p_j < 1$ for infinitely many j's. During the rest of the paper we will assume that the previous condition is satisfied and the chain is irreducible. For a concise introduction to the Theory of discrete time Markov chains with countable state spaces, we suggest Chapter 2 in \cite{l}. For a more specific book on Markov Chains, we suggest \cite{R}.

As mentioned before, Killeen and Taylor \cite{kt} considered  this stochastic machine in the case $d=2$ and $p_n= p$ for all $n \in \mathbb{N}$.
They proved that the spectrum of the transition operator in $l^{\infty}$ is equal to the filled-in Julia set of a quadratic map.
In \cite{am}, El Abdalaoui and Messaoudi, also in the case $d=2$ and $p_n$ constant, studied the spectrum of the transition operator acting in other Banach spaces as $c_0$ and $l^{\alpha}(\mathbb{Z}_+),\; \alpha \geq 1$.
Messaoudi and Smania \cite{ms} also defined the stochastic adding machine in the case where the base of numeration is not constant.
In particular, they considered the case where the base is the Fibonacci sequence. They proved that the eigenvalues of the spectrum of the transition operator acting in $l^{\infty}(\mathbb{Z}_+)$ is connected to the Julia set of an endomorphism of $\mathbb{C}^2$ (see also \cite{U} for the case where the base belongs to a class of recurrent sequences of degree $2$).

In this paper, we  study  convergence and spectral properties of the $\rm{AMFC}_d$  Markov chain. We prove that the $\rm{AMFC}_d$ Markov chain is null recurrent if and only if $\prod_{j=1}^{+\oo} p_j = 0 $. Otherwise the chain is transient.
We also prove that the spectrum of its transition operator acting on $l^{\infty}$ is equal to the filled-in fibered Julia set $E$ defined by
$$
E = E_d := \Big\{ z \in \mathbb{C} : \limsup_{j\ra +\oo} |\tilde{f}_{j}(z)| < +\oo \Big\},
$$
where
$\tilde{f}_{j} := f_{j} \comp ... \comp f_{1}$ for all $j \geq 1$ and
 $f_{j}:  \mathbb{C} \ra \mathbb{C}$, is the function defined by
$$
f_{j}(z) := \left( \frac{z-(1-p_j)}{p_j} \right)^d \, .
$$
We shall study the topological properties of the filled-in fibered Julia set $E$. In particular, we give sufficient conditions on the sequence $(p_n)_{n \geq 1}$ to ensure that $E$ is a connected set, or has a finite number of connected components. 

We also study some properties of the fibered Julia set $\partial {E}$, in particular, we introduce the Green function of $\partial {E}$  and prove that there exist $0<\rho<1$ and $\kappa>1$ such that whenever $p_i\in[\rho,1] $ for all $i\geq 2$, then $\partial {E}$ is a $\kappa$-quasicircle.

The paper is organized as follows: In section \ref{sec:chain} we obtain the transition operator of the $\rm{AMFC}_d$ Markov chain and we give a necessary and sufficient condition for recurrence and transience; Section \ref{sec:spectra} is devoted to provide an exact description of the spectra of these transition operators acting on $l^{\infty}$; Section \ref{sec:connectedness}
contains results about connectedness properties of the filled-in fibered Julia sets $E$; In section \ref{sec:fibered}, further properties of $\partial {E}$ are established in connection with properties of the associated fibered polynomials.


\section{Transition operators and recurrence of $\rm{AMFC}_d$ chains}
\label{sec:chain}

\smallskip

In this section $(X(t))_{t \ge 0}$ is an irreducible $\rm{AMFC}_d$ Markov chain associated to a sequence of probabilities $\bar{p} = (p_j)_{j=1}^{+\oo}$. Our first aim is to describe the transition probabilities of $(X(t))_{t \ge 0}$ which we denote $s(n,m) = s_{\bar{p},d}(n,m):=P(X(t+1)=m|X(t)=n)$.
They can be obtained directly from the description of the chain. First recall the definition of the counter $\zeta_n$ from Section \ref{sec:intro}. For every $n\ge 0$, we have
\begin{equation}
\label{tp1}
s(n,m) =
\left\{
\begin{array}{cl}
(1-p_{r+1}) \prod_{j=1}^r p_j &\rm{if} \ m = n - \sum_{j=1}^r (d-1) d^{j-1} \, , \\
& \  \ \ r \le \zeta_n -1 \, , \ \zeta_n \ge 2 \, , \\
1-p_1 &\rm{if}  \ m=n \, , \\
\prod_{j=1}^{\zeta_n} p_j &\rm{if} \ m=n+1 \, , \\
0 & \ \textrm{otherwise} \, .
\end{array}
\right.
\end{equation}
From the exact expressions above, the transition probabilities satisfy a property of self-similarity. Indeed, the following Lemma is straightforward
\begin{lemma} For all $j\geq 2$ and for all $d^{j-1} \le n \le d^{j}-2$, we have that $\zeta_n \le j$ and
$$ s(n,m) =
\left\{
\begin{array}{cl}
s(n - a_{j}(n) d^{j-1}, m - a_{j}(n) d^{j-1})  &, \ d^{j-1} \le m \le d^{j}-1 \, , \\
0 &, \ \textrm{otherwise.}
\end{array}
\right.
$$
Moreover, if $n=d^j-1$, we have $\zeta_n = j+1$ thus $s(d^j -1, d^j) = \prod_{l=1}^{j+1} p_l$ and
\begin{equation}
\label{tp4}
s(d^j -1, d^j - d^r) = (1-p_{r+1}) \prod_{l=1}^{r} p_l \, , \ 1\le r \le j \, .
\end{equation}
\end{lemma}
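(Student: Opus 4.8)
The plan is to unwind the definition of the counter $\zeta_n$ in terms of $d$-adic digits and then read both assertions off the explicit transition formula \eqref{tp1}; no real input is needed beyond careful bookkeeping, which is why the statement is indeed straightforward. First I would record the digit structure of $n$ in the given range: fix $j \ge 2$ and $d^{j-1} \le n \le d^j - 2$. From $n < d^j$ we get $a_i(n) = 0$ for all $i > j$, and from $n \ge d^{j-1}$ we get $a_j(n) \ge 1$ (otherwise $n = \sum_{i=1}^{j-1} a_i(n) d^{i-1} \le d^{j-1} - 1$). Since $n \ne d^j - 1 = \sum_{i=1}^j (d-1) d^{i-1}$, at least one digit among $a_1(n), \dots, a_j(n)$ differs from $d-1$, so $\zeta_n = \min\{ i \ge 1 : a_i(n) \ne d-1 \} \le j$, which is the first assertion.

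Next I would introduce $n' := n - a_j(n) d^{j-1}$, so $0 \le n' \le d^{j-1} - 1$, with $a_i(n') = a_i(n)$ for $i < j$ and $a_i(n') = 0$ for $i \ge j$. The key point is that $\zeta_{n'} = \zeta_n$: the two expansions agree at every index below $j$, and in the remaining case $\zeta_n = j$ one has $a_j(n') = 0 \ne d-1$, so the minimum defining the counter is the same for $n$ and for $n'$. With this, the self-similarity becomes a matter of inspecting the finitely many nonzero lines of \eqref{tp1} for the state $n$ and checking that each corresponds, under the shift $m \mapsto m - a_j(n) d^{j-1}$, to the analogous transition from $n'$ carrying the same probability: the self-transition $m = n$ has probability $1 - p_1 = s(n', n')$; the step-up $m = n+1$ has probability $\prod_{i=1}^{\zeta_n} p_i = \prod_{i=1}^{\zeta_{n'}} p_i = s(n', n'+1)$, and here $m = n+1 \le d^j - 1$, which is exactly where the hypothesis $n \le d^j - 2$ is used; a drop-down $m = n - \sum_{i=1}^r (d-1) d^{i-1}$ with $r \le \zeta_n - 1$ has probability $(1-p_{r+1}) \prod_{i=1}^r p_i$, and since $a_1(n) = \dots = a_{\zeta_n - 1}(n) = d-1$ one has $n' \ge \sum_{i=1}^{\zeta_n - 1} (d-1) d^{i-1} = d^{\zeta_n - 1} - 1 \ge d^r - 1$, so the drop-down target equals $a_j(n) d^{j-1} + \big(n' - \sum_{i=1}^r (d-1) d^{i-1}\big) \ge d^{j-1}$, lies in $[d^{j-1}, d^j-1]$, and the same probability is $s\big(n', n' - \sum_{i=1}^r(d-1)d^{i-1}\big)$ because $r \le \zeta_{n'} - 1$. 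Reading the correspondence backwards shows that the complete list of targets $m \in [d^{j-1}, d^j-1]$ with $s(n,m) > 0$ is exactly the shift of the list of targets of $n'$, and that $s(n,m) = 0$ for $m$ outside that interval; this is the displayed identity.

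For the ``moreover'' part, when $n = d^j - 1$ every digit $a_1(n), \dots, a_j(n)$ equals $d-1$ while $a_{j+1}(n) = 0$, so $\zeta_n = j+1$ directly; substituting into \eqref{tp1}, the step-up target $m = n+1 = d^j$ gives $s(d^j - 1, d^j) = \prod_{l=1}^{j+1} p_l$, and the drop-down targets $m = (d^j - 1) - (d^r - 1) = d^j - d^r$ with $1 \le r \le \zeta_n - 1 = j$ give $s(d^j - 1, d^j - d^r) = (1-p_{r+1}) \prod_{l=1}^r p_l$, which is \eqref{tp4}. I do not expect a genuine obstacle: the content is pure bookkeeping, and the only points that call for a moment's attention are the boundary case $\zeta_n = j$ in the identity $\zeta_{n'} = \zeta_n$, and the verification that the drop-down targets never fall below $d^{j-1}$ while the step-up target never exceeds $d^j-1$ --- the latter being exactly the role of the assumption $n \le d^j - 2$ rather than $n \le d^j-1$.
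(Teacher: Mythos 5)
Your proof is correct and coincides with the paper's approach: the paper states the lemma as an immediate ("straightforward") consequence of the explicit transition probabilities in (\ref{tp1}) and omits the verification, and your digit-by-digit bookkeeping (including $\zeta_{n'}=\zeta_n$ and the boundary checks on the step-up and drop-down targets) is exactly that omitted routine check.
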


With the transition probabilities, we obtain the countable transition matrix of the $\rm{AMFC}_d$ Markov chain $S = S_{d} = [s(n,m)]_{n,m \ge 0}$. To help the reader, the first entries of the matrix $S_2$ are given below:
$$
\tiny{
\left[
\begin{array}{cccccccccc}
\!\!1-p_1           \!\!&\!\! p_1 \!\!&\!\!0         \!\!&\!\!0    \!\!&\!\!0            \!\!&\!\!0    \!\!&\!\!0         \!\!&\!\!0    \!\!&\!\!0           \!\!&\!\! \cdots \!\! \\
\!\!p_1(1-p_2)      \!\!&\!\!1-p_1\!\!&\!\!p_1p_2    \!\!&\!\!0    \!\!&\!\!0            \!\!&\!\!0    \!\!&\!\!0         \!\!&\!\!0    \!\!&\!\!0           \!\!&\!\! \cdots \!\! \\
\!\!0               \!\!&\!\!0    \!\!&\!\!1-p_1     \!\!&\!\!p_1  \!\!&\!\!0            \!\!&\!\!0    \!\!&\!\!0         \!\!&\!\!0    \!\!&\!\!0           \!\!&\!\! \cdots \!\! \\
\!\!p_1p_2(1-p_3)   \!\!&\!\!0    \!\!&\!\!p_1(1-p_2)\!\!&\!\!1-p_1\!\!&\!\!p_1p_2p_3    \!\!&\!\!0    \!\!&\!\!0         \!\!&\!\!0    \!\!&\!\!0           \!\!&\!\! \cdots \!\! \\
\!\!0               \!\!&\!\!0    \!\!&\!\!0         \!\!&\!\!0    \!\!&\!\!1-p_1        \!\!&\!\! p_1 \!\!&\!\!0         \!\!&\!\!0    \!\!&\!\!0           \!\!&\!\! \cdots \!\! \\
\!\!0               \!\!&\!\!0    \!\!&\!\!0         \!\!&\!\!0    \!\!&\!\!p_1(1-p_2)   \!\!&\!\!1-p_1\!\!&\!\!p_1p_2    \!\!&\!\!0    \!\!&\!\!0           \!\!&\!\! \cdots \!\! \\
\!\!0               \!\!&\!\!0    \!\!&\!\!0         \!\!&\!\!0    \!\!&\!\!0            \!\!&\!\!0    \!\!&\!\!1-p_1     \!\!&\!\!p_1  \!\!&\!\!0           \!\!&\!\! \cdots \!\! \\
\!\!p_1p_2p_3(1-p_4)\!\!&\!\!0    \!\!&\!\!0         \!\!&\!\!0    \!\!&\!\!p_1p_2(1-p_3)\!\!&\!\!0    \!\!&\!\!p_1(1-p_2)\!\!&\!\!1-p_1\!\!&\!\!p_1p_2p_3p_4\!\!&\!\! \cdots \!\! \\
\!\!\vdots          \!\!&\!\!\vdots \!\!&\!\!\vdots    \!\!&\!\!\vdots \!\!&\!\!\vdots       \!\!&\!\!\vdots \!\!&\!\!\vdots    \!\!&\!\!\vdots \!\!&\!\!\vdots      \!\!&\!\!\ddots
\end{array}
\right]}
$$
For $S_3$, the first entries of the matrix are given below:
$$
\tiny{
\left[
\begin{array}{ccccccccccc}
\!\!1-p_1        \!\!&\!\!p_1  \!\!&\!\!0    \!\!&\!\!0         \!\!&\!\!0    \!\!&\!\!0    \!\!&\!\!0         \!\!&\!\!0    \!\!&\!\!0    \!\!&\!\!0  \!\!&\!\! \cdots \!\! \\
\!\!0            \!\!&\!\!1-p_1\!\!&\!\!p_1  \!\!&\!\!0         \!\!&\!\!0    \!\!&\!\!0    \!\!&\!\!0         \!\!&\!\!0    \!\!&\!\!0    \!\!&\!\!0  \!\!&\!\! \cdots \!\! \\
\!\!p_1(1-p_2)   \!\!&\!\!0    \!\!&\!\!1-p_1\!\!&\!\!p_1p_2    \!\!&\!\!0    \!\!&\!\!0    \!\!&\!\!0         \!\!&\!\!0    \!\!&\!\!0    \!\!&\!\!0  \!\!&\!\!\cdots \!\! \\
\!\!0            \!\!&\!\!0    \!\!&\!\!0    \!\!&\!\!1-p_1     \!\!&\!\!p_1  \!\!&\!\!0    \!\!&\!\!0         \!\!&\!\!0    \!\!&\!\!0    \!\!&\!\!0  \!\!&\!\! \cdots \!\! \\
\!\!0            \!\!&\!\!0    \!\!&\!\!0    \!\!&\!\!0         \!\!&\!\!1-p_1\!\!&\!\!p_1  \!\!&\!\!0         \!\!&\!\!0    \!\!&\!\!0    \!\!&\!\!0  \!\!&\!\! \cdots \!\! \\
\!\!0            \!\!&\!\!0    \!\!&\!\!0    \!\!&\!\!p_1(1-p_2)\!\!&\!\!0    \!\!&\!\!1-p_1\!\!&\!\!p_1p_2    \!\!&\!\!0    \!\!&\!\!0    \!\!&\!\!0  \!\!&\!\! \cdots \!\! \\
\!\!0            \!\!&\!\!0    \!\!&\!\!0    \!\!&\!\!0         \!\!&\!\!0    \!\!&\!\!0    \!\!&\!\!1-p_1     \!\!&\!\!p_1  \!\!&\!\!0    \!\!&\!\!0  \!\!&\!\! \cdots \!\! \\
\!\!0            \!\!&\!\!0    \!\!&\!\!0    \!\!&\!\!0         \!\!&\!\!0    \!\!&\!\!0    \!\!&\!\!0         \!\!&\!\!1-p_1\!\!&\!\!p_1  \!\!&\!\!0  \!\!&\!\! \cdots \!\! \\
\!\!p_1p_2(1-p_3)\!\!&\!\!0    \!\!&\!\!0    \!\!&\!\!0         \!\!&\!\!0    \!\!&\!\!0    \!\!&\!\!p_1(1-p_2)\!\!&\!\!0    \!\!&\!\!1-p_1\!\!&\!\!p_1p_{2}p_{3}\!\!&\!\! \cdots \!\! \\
\!\!\vdots \!\!&\!\!\vdots \!\!&\!\!\vdots \!\!&\!\!\vdots \!\!&\!\!\vdots \!\!&\!\!\vdots \!\!&\!\!\vdots \!\!&\!\!\vdots \!\!&\!\!\vdots \!\!&\!\!\vdots \!\!&\!\! \ddots \!\!
\end{array}
\right]}
$$
The transition operator induced by $S$, acting on $l^\oo(\mathbb{Z}_+)$, will also be denoted by $S$. From a result in \cite{am} we see that its restriction on $l^\alpha(\mathbb{Z}_+)$, $\alpha \ge 1$, is a well-defined operator on each of these spaces. Note that $S$ is doubly stochastic, i.e, $S$ is stochastic and the sum of the entries in each column is equal to one, if and only if $\prod_{j=1}^{+\oo} p_j = 0$. Indeed, except for the first column whose sum is $1- \prod_{j=1}^{+\oo} p_j$, any other row and column of $S$ has the sum of its entries equal to one.
\medskip

In the next Proposition, we obtain a necessary and sufficient condition for recurrence of the $\rm{AMFC}_d$ Markov chain.
\begin{proposition}
\label{prop:recurrence}
The $\rm{AMFC}_d$ Markov chain is null recurrent if and only if
\begin{equation}
\label{reccondition}
\prod_{j=1}^{+\oo} p_j = 0 \, .
\end{equation}
Otherwise the chain is transient.
\end{proposition}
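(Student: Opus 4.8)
The plan is to reduce everything to an elementary ``level chain'' obtained by watching $(X(t))$ only along the states $d^{k}-1$. Set $\gamma_{k}:=\prod_{j=1}^{k}p_{j}$, so that $\gamma_{k}\downarrow\gamma_{\oo}:=\prod_{j=1}^{+\oo}p_{j}$, and let $T_{0}:=\inf\{t\ge 0:X(t)=0\}$ and $T_{0}^{+}:=\inf\{t\ge 1:X(t)=0\}$. Since the chain is irreducible, it is recurrent iff $P_{0}(T_{0}^{+}<+\oo)=1$, and because from $0$ the chain either loops back to $0$ or moves to $1$ (and $p_{1}>0$), this holds iff $P_{1}(T_{0}<+\oo)=1$; moreover an irreducible recurrent chain is positive recurrent iff it carries a finite invariant measure. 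The crux of the argument is the identity
$$
P_{1}(T_{0}=+\oo)\ =\ \prod_{j=2}^{+\oo}p_{j}\ =\ \frac{\gamma_{\oo}}{p_{1}},
$$
which vanishes precisely when $\gamma_{\oo}=0$; granting it, the chain is recurrent when $\gamma_{\oo}=0$ and transient when $\gamma_{\oo}>0$.

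The first ingredient is a confinement property which is essentially contained in the Lemma above. For every $j\ge 1$, all one-step transitions from a state $n$ with $d^{j}\le n\le d^{j+1}-2$ remain in the block $[\,d^{j},d^{j+1}-1\,]$, and all one-step transitions from a state $n$ with $(d-1)d^{j-1}\le n\le d^{j}-2$ remain in the sub-block $[\,(d-1)d^{j-1},d^{j}-1\,]$. Indeed, self-loops and the up-step $n\mapsto n+1$ clearly stay in these intervals (we assumed $n$ is below the top), and by \eqref{tp1} a down-step from such an $n$ only resets digits of index strictly smaller than the index of the digit that pins $n$ into its block, so that digit is untouched. As these intervals are finite and the chain is irreducible, it follows that from any state in $[\,d^{j},d^{j+1}-1\,]$ the chain reaches $d^{j+1}-1$ almost surely before leaving the block, and from any state in $[\,(d-1)d^{j-1},d^{j}-1\,]$ it reaches $d^{j}-1$ almost surely before leaving the sub-block; and, since $(d-1)d^{j-1}\ge 1$ for $j\ge 1$, none of these passages ever visits $0$.

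Now fix $k\ge 1$ and start the chain at $d^{k}-1$, where the counter equals $k+1$. By \eqref{tp4} and the Lemma, the non-loop transitions from $d^{k}-1$ are: to $d^{k}$ with probability $\gamma_{k+1}$; to $0$ with probability $(1-p_{k+1})\gamma_{k}$ (the step $r=k$); and to $d^{k}-d^{r}$ with probability $(1-p_{r+1})\gamma_{r}$ for $1\le r\le k-1$. By confinement, from $d^{k}$ the chain reaches $d^{k+1}-1$ a.s.\ without visiting $0$; each $d^{k}-d^{r}$ with $r\le k-1$ lies in $[\,(d-1)d^{k-1},d^{k}-1\,]$, whence the chain returns to $d^{k}-1$ a.s.\ without visiting $0$; and the step to $0$ is the only way the chain can visit $0$ before reaching $d^{k+1}-1$. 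Using the telescoping identity $\gamma_{k+1}+(1-p_{k+1})\gamma_{k}+\sum_{r=1}^{k-1}(1-p_{r+1})\gamma_{r}=p_{1}$, conditioning on the first \emph{decisive} move out of $d^{k}-1$ (the one to $d^{k}$ or to $0$) gives probability $\gamma_{k+1}/\gamma_{k}=p_{k+1}$ for the move to $d^{k}$, i.e.\ $P_{d^{k}-1}(\text{reach }d^{k+1}-1\text{ before }0)=p_{k+1}$. By the strong Markov property, starting from $d^{k}-1$ the chain reaches $0$ iff at some level $d^{j}-1$ ($j\ge k$) it takes the step to $0$ instead of climbing to $d^{j+1}-1$; since at level $j$ the conditional probability of climbing is $p_{j+1}$, we get
$$
P_{d^{k}-1}(T_{0}=+\oo)\ =\ \prod_{j=k+1}^{+\oo}p_{j}.
$$
Finally, from state $1$ the chain reaches $d-1=d^{1}-1$ a.s.\ without visiting $0$ (the states $1,\dots,d-2$ have counter $1$, so from them it just climbs; if $d=2$ then $1=d-1$), so $P_{1}(T_{0}=+\oo)=\prod_{j\ge 2}p_{j}$ and $P_{0}(T_{0}^{+}=+\oo)=p_{1}\prod_{j\ge 2}p_{j}=\gamma_{\oo}$, which proves the recurrent/transient dichotomy.

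It remains to exclude positive recurrence when $\gamma_{\oo}=0$. In that case $S$ is doubly stochastic, as observed just before the statement, so the constant sequence $\mathbf 1$ is an invariant measure; an irreducible recurrent chain has a unique invariant measure up to scalar multiples, hence every invariant measure is infinite, the chain admits no invariant probability measure, and it is null recurrent. The only non-routine point in this plan is the confinement property together with the bookkeeping of the transition probabilities of the induced level chain; I expect the confinement statement (and checking that the sub-blocks are genuinely invariant up to their top points) to be the main obstacle, but it reduces to the elementary remark on which digits are changed by the transitions in \eqref{tp1}, combined with finiteness of the blocks.
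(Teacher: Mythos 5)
Your proof is correct, but it follows a different route from the paper's official argument. The paper invokes the classical criterion that an irreducible chain is transient if and only if the system $\tilde S_d v = v$ admits a solution with $0<v_j\le 1$ (the escape probabilities), and then solves that linear system algebraically using the self-similarity of (\ref{tp1})--(\ref{tp4}): any solution is constant on the blocks $[d^l,d^{l+1}-1]$ and satisfies $v_{d^{l}}=v_1/\prod_{j=2}^{l+1}p_j$, so a bounded positive solution exists exactly when $\prod_j p_j>0$. You instead compute the same escape probabilities directly and probabilistically, via the confinement of the blocks $[d^j,d^{j+1}-1]$ and $[(d-1)d^{j-1},d^j-1]$ and the ``decisive move'' renewal analysis at the level tops $d^k-1$, obtaining $P_{d^k-1}(T_0=+\infty)=\prod_{j\ge k+1}p_j$ and hence $P_0(T_0^+=+\infty)=\prod_{j\ge1}p_j$; note that your hitting probabilities are exactly the paper's $v_m$, so the two computations mirror each other, and your argument is essentially a rigorous version of the informal remark the paper makes after its proof (which works instead with expected numbers of visits to $0$ before hitting $d^n$). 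What the paper's route buys is brevity once the transience criterion is quoted, plus the explicit harmonic function; what your route buys is a self-contained first-step/hitting-time argument that yields the exact no-return probability without appealing to that criterion, at the cost of the confinement bookkeeping and the almost-sure resolution of each level (both of which you justify adequately: the blocks are finite, exits occur only through the top, and each visit to $d^k-1$ is decisive with probability $\prod_{j=1}^{k}p_j>0$). For null recurrence both arguments rest on the same observation that $S$ is doubly stochastic precisely when (\ref{reccondition}) holds; your use of uniqueness of the invariant measure for an irreducible recurrent chain is the standard and correct way to conclude.
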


\begin{proof}
We start by showing that condition (\ref{reccondition}) is necessary and sufficient to guarantee the recurrence of the $\rm{AMFC}_d$ Markov chain. The $\rm{AMFC}_d$ Markov chain is transient if and only if there exists a sequence $v=(v_j)_{j=1}^{+\oo}$ such that $0< v_j \le 1$ and
\begin{equation}
\label{rectran1}
v_j = \sum_{m = 1}^{+\infty} s(j,m) \, v_m \, , \ j \ge 1 \, ,
\end{equation}
i.e, $\tilde{S}_d \, v = v$ where $\tilde{S}_d$ is obtained from $S$ by removing its first line and column. Indeed, in the transient case a solution is obtained by taking $v_m$ as the probability that $0$ is never visited by the $\rm{AMFC}_d$ Markov chain given that the chain starts at state $m$, see the discussion on pages 42-43 Chapter 2 in~\cite{l}  and also Corollary~16.48 in~\cite{O}. 

\smallskip
Suppose that $v=(v_j)_{j=1}^{+\oo}$ satisfies the above conditions. We claim that
\begin{equation}
\label{rectran2}
v_{d^l + j} = v_{d^l} \, , \, \textrm{ for every } \, l\ge 0 \, \textrm{ and } \, j \in \{ 1, ..., (d-1)d^l-1 \} \, .
\end{equation}
The proof follows by induction. Indeed, for $j \in \{ 1, ..., (d-1)d^l-1 \}$, suppose that $v_{d^l} = v_{d^l + r}$, for all $0 \le r \le j-1$  then
$v_{d^l+j-1}  =  \sum_{m=1}^j s(d^l+j-1,m) \, v_{m}$. Since
$s(d^l+j-1,m)=0$  for all $0 \leq m < d^l$, we have
\begin{eqnarray}
\label{rectran3}
v_{d^{l}+j-1} & = & \sum_{r=0}^j s(d^l+j-1,d^l+r) \, v_{d^l + r} \nn \\
& = & \left( \sum_{r=0}^j s(d^l+j-1,d^l+r) \right) \, v_{d^l} \nn \\
& & \quad \quad + s(d^l+j-1,d^l+j) (v_{d^l+j} - v_{d^l})  \, .
\end{eqnarray}
Using the fact that $j \in \{ 1, ..., (d-1)d^l-1 \}$ note that
$$
\sum_{r=0}^j s(d^l+j-1,d^l+r) = 1 \, ,
$$
thus, since $s(d^l+j-1,d^l+j)>0$, from (\ref{rectran3}), we have that $v_{d^l+j} = v_{d^l+j-1} = v_{d^l}$. This proves the claim.

\smallskip

It remains to obtain $v_{d^{l+1}}$ from $v_{d^{l}}$ for $l\ge 0$. First note that (\ref{rectran2}) implies $v_{d^{l+1} - d^r} = v_{d^l}$ for $0\le r \le l$. From the transition probabilities expression in (\ref{tp4}),
if we let $p_0=1$, we have that
\begin{eqnarray*}
v_{d^l} & = & v_{d^{l+1}-1} \ = \ (p_0...p_{l+2}) v_{d^{l+1}} + \sum_{r=0}^{l} (p_0...p_r - p_0...p_{r+1}) v_{d^{l+1} - d^{r}} \\
& = & (p_0...p_{l+2}) v_{d^{l+1}} + (1 - p_0...p_{l+1}) v_{d^{l}}.
\end{eqnarray*}
Therefore for every $l\ge 1$,
$$
v_{d^{l}} = \frac{v_{d^{l-1}}}{p_{l+1}} = \frac{v_1}{\prod_{j=2}^{l+1} p_j} \, .
$$
From this equality, we get to the conclusion that $v$ exists and the chain is transient if and only if
$$
\prod_{j=1}^{+\oo} p_j > 0 \, .
$$

\smallskip

Now suppose that we are in the recurrent case. Since $S$ is an irreducible countable doubly stochastic matrix, it is simple to verify that the $\rm{AMFC}_d$ has no invariant probability measure and so cannot be positive recurrent. Indeed, since each column of $S$ has the sum of its entries equal to one, only the constant sequences are right eigenvectors of $S$ with eigenvalue $1$. But, by definition, an invariant probability measure is an $l^1(\mathbb{Z}_+)$ sequence which is also a right eigenvector of $S$ with eigenvalue $1$. Thus $S$ has no invariant probability measure. 
\end{proof}

We can also obtain the recurrence/transience condition of Proposition \ref{prop:recurrence} through probabilistic arguments. We just describe roughly these arguments leaving the details to the reader. Let $(X(t))_{t \ge 0}$ be the $\rm{AMFC}_d$ Markov chain starting at $X(0)=0$. For $n\ge 0$, denote the first hitting time of state $d^n$ by
$$
\tau_n = \min \{ t \ge 1 : X(t) = d^n \}
$$
and the number of visits to $0$ before the random time $\tau_n$ by $N_n$. Then the expectation of $N_n$ is
\begin{equation}
\label{eq:taun}
\Big( \prod_{j=1}^{n+1} p_j \Big)^{-1} \, .
\end{equation}
We check this by induction. If $n=0$ then $N_0 = \tau_0$. But $\tau_0$ is a geometric random variable with parameter $p_1$, which has expectation $p_1^{-1}$. Now suppose that (\ref{eq:taun}) holds for $n$. When the chain reaches state $d^n$, it must get to $d^{n+1} - 1$ to attempt a return to $0$. If it returns to $0$, it must spend a time with the same distribution of $\tau_n$ to reach $d^n$ again. Moreover, the number of visits to $d^{n+1} - 1$ resulting in a jump to $0$ or a jump to $d^{n+1}$ is a geometric random variable of parameter $p_{n+2}$. By the Markov property, the expectation of $N_{n+1}$ is the expectation of $N_n$ times $p_{n+2}^{-1}$. This means that (\ref{eq:taun}) also holds for $n+1$.

Letting $n$ go to infinity, we get that the expected number of visits to $0$ is $\Big( \prod_{j=1}^{\infty} p_j \Big)^{-1}$. Moreover, as it is shown in the beggining of page 40 in \cite{l}, the probability that the chain never returns to $0$ is the inverse of the expected number of visits to $0$, which is $\prod_{j=1}^{\infty} p_j$. Therefore, the chain is recurrent if and only if $\prod_{j=1}^{\infty} p_j = 0$.

Finally, let us just point out that the previous argument also allows us to show that the expectation of $\tau_n$ is given by $d^n \Big( \prod_{j=1}^{n+1} p_j \Big)^{-1}$. Indeed, the expected number of visits to each one of the states $0,...,d^n-1$ before time $\tau_n$ is $\Big( \prod_{j=1}^{n+1} p_j \Big)^{-1}$.

\medskip

\section{Spectra of transition operators of $\rm{AMFC}_d$ chains}
\label{sec:spectra}
%
%
In this section we describe the spectrum of the transition operator of an $\rm{AMFC}_d$ acting on $l^\infty (\mathbb{Z}_+)$ for a fixed sequence  $\bar{p} = (p_j)_{j=1}^{+\oo}$ with $p_{j}$ in $(0,1]$ satisfying the irreducibility condition. We start introducing some notation. 
Let $f_{j}:  \mathbb{C} \ra \mathbb{C}$, $j \ge 1$, be the function defined by
$$
f_{j}(z) := \left( \frac{z-(1-p_j)}{p_j} \right)^d \, .
$$
Also let $\tilde{f}_0 := Id$, $\tilde{f}_{j} := f_{j} \comp ... \comp f_{1}$ and
$$
E_{\bar{p}}= E := \Big\{ z \in \mathbb{C} : \limsup_{j\ra +\oo} |\tilde{f}_{j}(z)| < +\oo \Big\} \, .
$$
We use the notation $\mathbb{D}(w,r) = \{ z\in \mathbb{C} : |w-z| < r \}$ and $\overline{\mathbb{D}(w,r)} =  \{ z\in \mathbb{C} : |w-z| \leq r \}$.
\begin{lemma}\label{bound}
The set $E_{\bar{p}}$ is included in the closed disk $\overline{\mathbb{D}(1-p_1,p_1)}$. Moreover, for all $z\in E_{\bar{p}}$ and $j\geq 1$, $\tilde{f}_{j}(z)$ belongs to the disk $\overline{\mathbb{D} (1-p_{j+1},p_{j+1})}$.
\end{lemma}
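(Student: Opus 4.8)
The plan is to reduce both assertions to a single elementary escape estimate. Introduce the affine bijections $\phi_j(w):=\dfrac{w-(1-p_j)}{p_j}$, which carry $\overline{\mathbb{D}(1-p_j,p_j)}$ onto the closed unit disk $\overline{\mathbb{D}(0,1)}$, and note that $f_j(w)=\phi_j(w)^d$. The two statements can then be unified: for $z\in E_{\bar p}$ and every $j\ge 0$ (with the convention $\tilde f_0=Id$) one wants $\tilde f_j(z)\in\overline{\mathbb{D}(1-p_{j+1},p_{j+1})}$, equivalently $|\phi_{j+1}(\tilde f_j(z))|\le 1$. The case $j=0$ is exactly the inclusion $E_{\bar p}\subseteq\overline{\mathbb{D}(1-p_1,p_1)}$, and the cases $j\ge 1$ are the ``moreover'' part.

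First I would establish the pointwise bound: if $|w|>1$ and $0<p\le 1$, then, since $1-p\in[0,1)$,
$$
\Big|\frac{w-(1-p)}{p}\Big|\ \ge\ \frac{|w|-(1-p)}{p}\ =\ 1+\frac{|w|-1}{p}\ \ge\ |w| ,
$$
because $1/p\ge 1$ and $|w|-1>0$. Applying this with $w=\tilde f_k(z)$ when $|\tilde f_k(z)|>1$, and using that $f_{k+1}(w)=\phi_{k+1}(w)^d$ with $d\ge 2$, gives $|\tilde f_{k+1}(z)|=|\phi_{k+1}(\tilde f_k(z))|^{d}\ge |\tilde f_k(z)|^{d}\ge |\tilde f_k(z)|^{2}>1$. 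Iterating, $|\tilde f_{k+m}(z)|\ge |\tilde f_k(z)|^{d^{m}}\to +\infty$ as $m\to\infty$, so $\limsup_j|\tilde f_j(z)|=+\infty$ and hence $z\notin E_{\bar p}$.

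The lemma then follows by contraposition. Suppose $z\in E_{\bar p}$ but $\tilde f_j(z)\notin\overline{\mathbb{D}(1-p_{j+1},p_{j+1})}$ for some $j\ge 0$. Then $|\phi_{j+1}(\tilde f_j(z))|>1$, so $|\tilde f_{j+1}(z)|=|\phi_{j+1}(\tilde f_j(z))|^{d}>1$, and the escape estimate applied with $k=j+1$ makes the sequence $(|\tilde f_i(z)|)_{i\ge 1}$ unbounded, contradicting $z\in E_{\bar p}$. Taking $j=0$ yields $E_{\bar p}\subseteq\overline{\mathbb{D}(1-p_1,p_1)}$, and $j\ge 1$ yields the second assertion.

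I do not expect any real obstacle. The only points requiring a little care are the triangle inequality $|w-(1-p)|\ge |w|-(1-p)$ (valid precisely because $1-p\ge 0$), the observation that a smaller $p$ only accelerates the escape (the factor $1/p\ge 1$ works in our favour rather than against it), and keeping the indices straight so that the map applied to $\tilde f_k(z)$ is $f_{k+1}=\phi_{k+1}^{\,d}$; all of this is routine bookkeeping rather than a genuine difficulty.
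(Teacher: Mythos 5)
Your proposal is correct and follows essentially the same route as the paper: the same pointwise estimate $\bigl|\tfrac{w-(1-p)}{p}\bigr|\ge|w|$ for $|w|>1$, the same iterated escape bound $|\tilde f_{k+m}(z)|\ge|\tilde f_k(z)|^{d^m}$, and the same contrapositive conclusion (the paper treats the disk inclusion at level $j=0$ and at level $j\ge 1$ separately, while you unify them via $\tilde f_0=Id$, and you also cover $p_j=1$ cleanly, which the paper's estimate tacitly assumes away).
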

\begin{proof}
Take $p_j\in (0,1)$ and $z\in \mathbb{C}$ with $|z|>1$ then
\begin{equation}
\label{LowerBound}
\left| \frac{z-(1-p_j)}{p_j}\right| \ge \frac{|z| - (1-p_j)}{p_j} = \frac{|z|-1}{p_j} + 1 > |z| > 1 \, .
\end{equation}
Thus, we obtain, for every $z\in \mathbb{C}$ with $|z|>1$ and $j\ge 1$, that
$$
|f_{j}(z)| > \vert z \vert^d >1 \, .
$$

Now suppose $|\tilde{f}_{r}(z)|>1$ for some $r>1$, then by induction one can show that for $j>r$
\begin{equation}
\label{julia1}
|\tilde{f}_{j}(z)| \ge |\tilde{f}_{r}(z)|^{d^{j-r}} \, .
\end{equation}

From (\ref{julia1}) we see that $\lim_{j\ra +\oo} |\tilde{f}_{j}(z)| = +\oo$ whenever $|\tilde{f}_{r}(z)|>1$ for some $r>1$.
In particular, if $|\tilde{f}_{r}(z)|>1$ then $z \notin E$.

Now we can finish the proof. Suppose that $|z-(1-p_1)|>p_1.$ This implies that $|f_1(z)|>1$ and so $z\notin E$.
Analogously, if $|\tilde{f}_j(z) - (1-p_{j+1})| > p_{j+1}$, we have that $|\tilde{f}_{j+1}(z)|>1$ and then $z \notin E$.
\end{proof}

\smallskip
\begin{corollary}
\label{cor:bound}
We have the following equality
\begin{eqnarray*}
E_{\bar{p}} & = & \overline{\mathbb{D}(1-p_1,p_1)} \cap \bigcap_{j=1}^\infty \tilde{f}_j^{-1} \big( \overline{\mathbb{D}(1-p_{j+1},p_{j+1})} \big) \\
& = &
\overline{\mathbb{D}(0,1)} \cap \bigcap_{j=1}^\infty \tilde{f}_j^{-1} \big( \overline{\mathbb{D} (0,1)} \big) \, .
\end{eqnarray*}
\end{corollary}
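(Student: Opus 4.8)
The plan is to derive both equalities directly from Lemma~\ref{bound}, by unwinding the definitions. For the first equality, I would argue by double inclusion. The inclusion $\subseteq$ is immediate from Lemma~\ref{bound}: if $z \in E_{\bar p}$ then $z \in \overline{\mathbb{D}(1-p_1,p_1)}$ and, for every $j \ge 1$, $\tilde f_j(z) \in \overline{\mathbb{D}(1-p_{j+1},p_{j+1})}$, which is exactly the statement that $z \in \tilde f_j^{-1}\big(\overline{\mathbb{D}(1-p_{j+1},p_{j+1})}\big)$ for all $j$. For the reverse inclusion, suppose $z$ lies in the right-hand intersection. Then for every $j \ge 1$ we have $|\tilde f_j(z) - (1-p_{j+1})| \le p_{j+1} \le 1$, hence $|\tilde f_j(z)| \le 2$ for all $j$; in particular $\limsup_j |\tilde f_j(z)| \le 2 < +\infty$, so $z \in E_{\bar p}$.

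For the second equality I would exploit the algebraic structure of the maps $f_j$. The key observation is that the affine change of coordinates $\varphi_j(w) = \frac{w-(1-p_j)}{p_j}$ sends the disk $\overline{\mathbb{D}(1-p_j,p_j)}$ bijectively onto $\overline{\mathbb{D}(0,1)}$, and $f_j = (\,\cdot\,)^d \circ \varphi_j$. Consequently $\tilde f_j^{-1}\big(\overline{\mathbb{D}(0,1)}\big) = \tilde f_{j-1}^{-1}\big(f_j^{-1}(\overline{\mathbb{D}(0,1)})\big) = \tilde f_{j-1}^{-1}\big(\varphi_j^{-1}((\,\cdot\,)^d{}^{-1}(\overline{\mathbb{D}(0,1)}))\big) = \tilde f_{j-1}^{-1}\big(\varphi_j^{-1}(\overline{\mathbb{D}(0,1)})\big) = \tilde f_{j-1}^{-1}\big(\overline{\mathbb{D}(1-p_j,p_j)}\big)$, using that $z\mapsto z^d$ maps $\overline{\mathbb{D}(0,1)}$ onto itself. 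Thus $\tilde f_j^{-1}\big(\overline{\mathbb{D}(0,1)}\big) = \tilde f_{j-1}^{-1}\big(\overline{\mathbb{D}(1-p_j,p_j)}\big)$ for every $j \ge 1$; taking $j=1$ gives $\tilde f_1^{-1}(\overline{\mathbb{D}(0,1)}) = \overline{\mathbb{D}(1-p_1,p_1)}$ (since $\tilde f_0 = Id$ so $\tilde f_1^{-1}(\overline{\mathbb{D}(0,1)}) = f_1^{-1}(\overline{\mathbb{D}(0,1)}) = \varphi_1^{-1}(\overline{\mathbb{D}(0,1)})$). Reindexing the intersection over $j$ then turns the first displayed set into the second one: the $j=1$ factor of the second intersection together with the opening disk $\overline{\mathbb{D}(0,1)}$ combine to give $\overline{\mathbb{D}(1-p_1,p_1)}$ (intersected with $\overline{\mathbb D(0,1)}$, but the former is contained in the latter), and for $j \ge 1$ the factor $\tilde f_{j+1}^{-1}(\overline{\mathbb{D}(0,1)})$ equals $\tilde f_j^{-1}(\overline{\mathbb{D}(1-p_{j+1},p_{j+1})})$.

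I expect the only delicate point to be the bookkeeping in the reindexing — making sure the shifted intersections line up and that the leading disk $\overline{\mathbb{D}(0,1)}$ is correctly absorbed (recall $\overline{\mathbb{D}(1-p_1,p_1)} \subseteq \overline{\mathbb{D}(0,1)}$ since $|1-p_1| + p_1 = 1$ when $p_1 \le 1$, so no information is lost). A minor technical caveat is the edge case $p_j = 1$, where $\varphi_j$ is still affine and the argument goes through verbatim, and the degenerate possibility that some $\tilde f_j$ is not surjective, which is harmless since we only ever take preimages. No genuine analytic obstacle arises; the corollary is essentially a restatement of Lemma~\ref{bound} once the conjugacy $f_j = (\,\cdot\,)^d \circ \varphi_j$ is made explicit.
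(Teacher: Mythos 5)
Your proof is correct and follows exactly the route the paper intends (the corollary is stated without a separate proof, being an immediate consequence of Lemma~\ref{bound}): one inclusion is the lemma itself, the reverse inclusion holds because membership in all the disks forces $\limsup_j |\tilde f_j(z)| < +\infty$, and the second equality is the reindexing based on $f_j^{-1}\big(\overline{\mathbb{D}(0,1)}\big) = \overline{\mathbb{D}(1-p_j,p_j)}$ together with $\overline{\mathbb{D}(1-p_1,p_1)} \subseteq \overline{\mathbb{D}(0,1)}$. No gaps; the bookkeeping you flag is handled correctly.
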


\smallskip
\begin{proposition}
The point spectrum of $S$ in $l^\oo (\mathbb{\mathbb{Z}_+})$ is equal to $E$. Furthermore, fix $\lambda \in E$ and $v_0 >0$ and define
\begin{equation}
\label{ps1}
v_n = v_0 \, \prod_{r=1}^{\left\lfloor \log_d(n) \right\rfloor + 1} (q_{\lambda}(r))^{a_{r}(n)} \, , \ n \ge 1 \, ,
\end{equation}
where $a_{r}(n)$ is the $r$th digit of $n$ in its $d$-adic expansion and
\begin{equation}
\label{ps2}
q_{\lambda}(r) = (\, h_r \comp \tilde{f}_{r-1} \, ) (\lambda)
\end{equation}
with
\begin{equation}
\label{ps3}
h_r(z) = \frac{z}{p_r} - \frac{1-p_r}{p_r}.
\end{equation}
Thus $(v_n)_{n=0}^{+\oo}$ is, up to multiplication by a constant, the unique right eigenvector of $S$ in $l^\oo (\mathbb{N})$ with eigenvalue $\lambda$.
\end{proposition}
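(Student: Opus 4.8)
The plan is to establish $\sigma_p(S)=E$ together with the eigenvector description by analysing the equation $Sv=\lambda v$ coordinate by coordinate. The starting point is a triangularity remark: since $s(n,m)=0$ for $m>n+1$ while $s(n,n+1)=\prod_{j=1}^{\zeta_n}p_j>0$, the $n$-th equation reads
$$
\Big(\prod_{j=1}^{\zeta_n}p_j\Big)v_{n+1}\;=\;\big(\lambda-(1-p_1)\big)v_n\;-\;\sum_{m<n}s(n,m)\,v_m ,
$$
so $v_{n+1}$ is an explicit linear function of $v_0,\dots,v_n$. Hence for every $\lambda\in\C$ the solution space of $Sv=\lambda v$ among all sequences (a fortiori in $l^\oo(\Z_+)$) is at most one-dimensional, and running the recursion from $v_0=0$ gives $v\equiv 0$. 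It therefore suffices to prove that (a) the sequence of (\ref{ps1}) satisfies $Sv=\lambda v$ for \emph{every} $\lambda\in\C$, and (b) this sequence belongs to $l^\oo(\Z_+)$ if and only if $\lambda\in E$; the uniqueness up to a constant and the fact that an eigenvector must have $v_0\neq 0$ then follow from the first remark.

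For (a) I would argue by strong induction on $n$, using that $\Z_+$ is the disjoint union of $\{0,\dots,d-2\}$, of $\{d^{j}-1:j\ge 1\}$, and of $\bigcup_{k\ge 2}\{d^{k-1},\dots,d^{k}-2\}$. If $n\le d-2$ then $\zeta_n=1$, so the equation is just $(1-p_1)v_n+p_1v_{n+1}=\lambda v_n$, which holds because (\ref{ps1}) gives $v_n=v_0\,q_{\lambda}(1)^{n}$ and $p_1q_{\lambda}(1)+(1-p_1)=\lambda$ (the latter being the definition of $q_{\lambda}(1)=h_1(\lambda)$). If $n\in\{d^{k-1},\dots,d^{k}-2\}$ with $k\ge 2$, put $a:=a_k(n)\ge 1$ and $\hat n:=n-a\,d^{k-1}<n$. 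The self-similarity Lemma gives $s(n,m)=s(\hat n,m-a\,d^{k-1})$ whenever $s(n,m)\neq 0$, and a direct inspection of (\ref{ps1}) shows $v_m=q_{\lambda}(k)^{a}\,v_{m-a\,d^{k-1}}$ for all such $m$ (including the case $m-a\,d^{k-1}=d^{k-1}$, i.e. $m=(a+1)d^{k-1}$, which arises when $\hat n=d^{k-1}-1$ and then forces $a\le d-2$), while also $v_n=q_{\lambda}(k)^{a}\,v_{\hat n}$. Consequently the $n$-th equation is $q_{\lambda}(k)^{a}$ times the $\hat n$-th equation, which holds by induction.

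The only real computation is for the carry rows $n=d^{j}-1$. There the nonzero transitions are $s(d^{j}-1,\,d^{j}-d^{r})=(1-p_{r+1})\prod_{l=1}^{r}p_l$ for $0\le r\le j$ and $s(d^{j}-1,\,d^{j})=\prod_{l=1}^{j+1}p_l$, while (\ref{ps1}) yields $v_{d^{j}-d^{r}}=v_0\big(\prod_{l=r+1}^{j}q_{\lambda}(l)\big)^{d-1}$, $v_{d^{j}}=v_0\,q_{\lambda}(j+1)$ and $v_{d^{j}-1}=v_0\big(\prod_{l=1}^{j}q_{\lambda}(l)\big)^{d-1}$. Writing $Q_l:=q_{\lambda}(l)$ and using the identities $Q_l^{d}=p_{l+1}Q_{l+1}+(1-p_{l+1})$ for $l\ge 1$ (immediate from $\tilde f_l(\lambda)=Q_l^{d}$ and $q_{\lambda}(l+1)=(h_{l+1}\comp\tilde f_l)(\lambda)$) together with $p_1Q_1+(1-p_1)=\lambda$, the sum $\sum_m s(d^{j}-1,m)v_m$ telescopes from the top: absorbing the terms in the order $r=j,j-1,\dots,1,0$ into a running partial sum, each absorption collapses a factor $(1-p_{r+1})+p_{r+1}Q_{r+1}$ into $Q_r^{d}$ (and the final one collapses $(1-p_1)+p_1Q_1$ into $\lambda$), leaving $\lambda\,v_0\big(\prod_{l=1}^{j}Q_l\big)^{d-1}=\lambda v_{d^{j}-1}$. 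I expect this telescoping identity to be the main technical point; it is cleanest to isolate it as a separate claim proved by a short downward induction on the summation index.

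For (b), the direct implication is immediate: if $v\in l^\oo(\Z_+)$, then evaluating (\ref{ps1}) at $n=d^{j-1}$ (whose only nonzero digit equals $1$) gives $v_{d^{j-1}}=v_0\,q_{\lambda}(j)$, so $|q_{\lambda}(j)|\le\|v\|_\oo/|v_0|$ for all $j$; since $\tilde f_j(\lambda)=q_{\lambda}(j)^{d}$, the sequence $(\tilde f_j(\lambda))_j$ is bounded, i.e.\ $\lambda\in E$. Conversely, if $\lambda\in E$ then Lemma~\ref{bound} gives $\tilde f_j(\lambda)\in\overline{\mathbb{D}(1-p_{j+1},p_{j+1})}$, hence $|q_{\lambda}(j)|=|\tilde f_j(\lambda)|^{1/d}\le 1$ for every $j\ge 1$, so each factor of (\ref{ps1}) has modulus at most one and $|v_n|\le|v_0|$, whence $v\in l^\oo(\Z_+)$. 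Putting everything together: for $\lambda\in E$ the sequence (\ref{ps1}) is a bounded nonzero right eigenvector, so $E\subseteq\sigma_p(S)$; and if $\lambda\in\sigma_p(S)$ then any eigenvector in $l^\oo(\Z_+)$ has $v_0\neq 0$ and, by the triangularity remark, is a scalar multiple of (\ref{ps1}), which is thus bounded and forces $\lambda\in E$. This yields $\sigma_p(S)=E$ and the asserted form (unique up to a constant) of the eigenvector.
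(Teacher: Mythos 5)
Your proposal is correct and rests on the same core ingredients as the paper's own proof: the coordinate-wise recursion forced by the structure of $S$ (zero above the superdiagonal, with $s(n,n+1)=\prod_{j=1}^{\zeta_n}p_j>0$), the telescoping identity $q_{\lambda}(r)^d=p_{r+1}q_{\lambda}(r+1)+(1-p_{r+1})$ together with $p_1q_{\lambda}(1)+(1-p_1)=\lambda$, and Lemma~\ref{bound} for the equivalence between boundedness of $(q_{\lambda}(j))_j$ and $\lambda\in E$. The differences are organizational rather than substantive: you verify the explicit formula (\ref{ps1}) for all $\lambda$ by reducing, via the self-similarity lemma, to the carry rows $n=d^j-1$ and deduce uniqueness from the triangularity remark, whereas the paper runs one induction deriving (\ref{ps1}) directly from $(Sv)_n=\lambda v_n$ for general $n$, with the same telescoping algebra appearing in (\ref{ps6}).
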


\begin{proof}
Since $S$ is stochastic, its spectrum is a subset of the closed disk $\overline{\mathbb{D}(0,1)}$. Let us fix $\lambda \in \overline{\mathbb{D}(0,1)}$ and let $(v_n)_{n=0}^{+\oo}$ be the sequence defined in the statement. Then the proof of the Proposition relies on the two following claims below:

\medskip
%

\no \textbf{Claim 1:} $(|q_{\lambda}(j)|)_{j=1}^{+\oo}$ is bounded above by one if and only if $\lambda \in E$, otherwise it is unbounded. In particular,  $(v_n)_{n=0}^{+\oo}$ is a well defined element of $l^\oo (\mathbb{\mathbb{Z}_+})$ if and only if $\lambda \in E$.

\medskip

\no \textbf{Claim 2:}  $\lambda$ is an eigenvalue of $S$ if and only if $(v_n)_{n=0}^{+\oo}$ is a well defined element of $l^\oo (\mathbb{\mathbb{Z}_+})$.
Moreover if  $\lambda$ is an eigenvalue of $S$  then $(v_n)_{n=0}^{+\oo}$ is, up to multiplication by a constant, the unique right eigenvector of $S$ with eigenvalue $\lambda$.

\medskip

\no \textbf{Proof of Claim 1:} If $\lambda \in E$ then $\tilde{f}_{r-1}(\lambda)$ is uniformly bounded and according
to Lemma \ref{bound}, for all $r\geq 1$
$$
\tilde{f}_{r-1}(\lambda)\in \overline{\mathbb{D} (1-p_{r},p_{r})}.
$$
Since $h_r$ maps  $\overline{\mathbb{D} (1-p_{r},p_{r})}$ on $\overline{\mathbb{D} (0,1)} $ we deduce that
$|q_{\lambda}(r)|\leq1 $ and $(|q_{\lambda}(r)|)_{r=1}^{+\oo}$ is bounded above by one.
Indeed, assume that there exists $j_0 \in \mathbb{N}$ such that $|q_{\lambda}(j_0)|>1$. Then $\lambda \not \in E$. Thus  $|\tilde{f}_{r}(z)|>1$ for some $r>1$. We deduce  by (\ref{julia1}) that $\lim_{j \to +\infty} \vert  \tilde{f}_{j}(z) \vert= +\infty$. Hence $\lim_{j \to +\infty}|q_{\lambda}(j)|)= +\infty$.

Conversely, suppose $|q_{\lambda}(r)|\leq 1$ for all $r$. 
From (\ref{LowerBound}) we know that for any $|z|>1$
$$
|h_r(z)| \ge |z|. \, 
$$
Thus, if $|\tilde{f}_{r-1}(\lambda)| > 1$ for some $r > 0$, we have
$$
|q_{\lambda}(r)| = |h_r(\tilde{f}_{r-1}(\lambda))| \ge |\tilde{f}_{r-1}(\lambda)| > 1 \, ,
$$
which yields a contradiction to the fact that $|q_{\lambda}(r)|\leq 1$. Hence $|\tilde{f}_{r-1}(\lambda)|\leq 1$ for all $r$ and, by definition, $\lambda \in E$.

Hence $(|q_{\lambda}(j)|)_{j=1}^{+\oo}$ is bounded above by one if and only if $\lambda \in E$.

\medskip

\no \textbf{Proof of Claim 2:} Let $v=(v_n)_{n\ge 0}$ be a sequence of real numbers and suppose that $(S \, v)_n = \lambda \, v_n$ for every $n \ge 0$. We shall prove that $v$ satisfies (\ref{ps1}). The proof is based on the following representation
\begin{eqnarray}
\label{ps4}
(S \, v)_n & = & \left( \prod_{j=1}^{\zeta_{n}} p_j \right) \, v_{n+1} + (1-p_1) v_n \nn \\
& & + \sum_{r=1}^{\zeta_{n}-1}
\left( \prod_{j=1}^{r} p_j \right) (1-p_{r+1}) v_{n-\sum_{j=1}^r (d-1)d^{j-1}} \, ,
\end{eqnarray}
for $\zeta_{n} \geq 2$
and $(S \, v)_n = p_1 v_{n+1} + (1-p_1) v_n$ if $\zeta_{n}= 1$. This representation follows directly from
the definition of the transition probabilities in (\ref{tp1}). From (\ref{ps4}), we show (\ref{ps1}) by induction.

Indeed, for $n=1$ we have that
$$
\lambda v_0 = (1-p_1)v_0 + p_1 v_1 \ \Rightarrow \ v_1 = \left( \frac{\lambda - (1-p_1)}{p_1}\right) v_0 = q_{\lambda}(1) \, v_0 \, .
$$
Now fix $n \ge 1$ and suppose that (\ref{ps1}) holds for every $1 \le j \le n$. By (\ref{ps4}), since $(S \, v)_n = \lambda \, v_n$, we have that
\begin{equation}
\label{ps5}
\frac{v_{n+1}}{v_0 \, \prod_{r=\zeta_{n}+1}^{\left\lfloor \log_d(n) \right\rfloor + 1} (q_{\lambda}(r))^{a_{r}(n)}}
\end{equation}
is equal to
\begin{eqnarray}
\label{ps6}
\lefteqn{\!\!\!\!\!\!\!\!\!
\frac{ [\lambda - (1-p_1)] \Big[ \prod_{r=1}^{\zeta_{n}-1} (q_{\lambda}(r))^{d-1} \Big] (q_{\lambda}(\zeta_{n}))^{a_{\zeta_{n}}(n)} }
{ \prod_{j=1}^{\zeta_{n}} p_j }
} \nn \\
& & + \frac{ (1-p_2) \Big[ \prod_{r=2}^{\zeta_{n}-1} (q_{\lambda}(r))^{d-1} \Big] (q_{\lambda}(\zeta_{n}))^{a_{\zeta_{n}}(n)} }
{ \prod_{j=2}^{\zeta_{n}} p_j }
\nn \\
& & ... \, + \frac{1-p_{\zeta_{n}}}{p_{\zeta_{n}}} (q_{\lambda}(\zeta_{n}))^{a_{\zeta_{n}}(n)} \, .
\end{eqnarray}
Since
$$
q_{\lambda}(1) = \frac{\lambda - (1-p_1)}{p_1} \, ,
$$
the first term in (\ref{ps6}) is equal to
$$
\frac{ q_{\lambda}(1)^d \, \Big[ \prod_{r=2}^{\zeta_{n}-1} (q_{\lambda}(r))^{d-1} \Big] (q_{\lambda}(\zeta_{n}))^{a_{\zeta_{n}}(n)} }
{ \prod_{j=2}^{\zeta_{n}} p_j } \, .
$$
Summing with the second term we get
$$
\left( \frac{ (q_{\lambda}(1))^d - (1-p_2)}{p_2} \right)
\frac{ \Big[ \prod_{r=2}^{\zeta_{n}-1} (q_{\lambda}(r))^{d-1} \Big] (q_{\lambda}(\zeta_{n}))^{a_{\zeta_{n}}(n)} }
{ \prod_{j=3}^{\zeta_{n}} p_j } \, ,
$$
which is equal to
$$
\frac{ (q_{\lambda}(2))^d \, \Big[ \prod_{r=3}^{\zeta_{n}-1} (q_{\lambda}(r))^{d-1} \Big] (q_{\lambda}(\zeta_{n}))^{a_{\zeta_{n}}(n)} }
{ \prod_{j=3}^{\zeta_{n}} p_j } \, .
$$
By induction we have that the sum of the first $\zeta_{d.n}-1$ terms in (\ref{ps6}) is equal to
$$
\frac{ (q_{\lambda}(\zeta_{n}-1))^d \, (q_{\lambda}(\zeta_{n}))^{a_{\zeta_{n}}(n)} }
{ p_{\zeta_{n}} } \, .
$$
Finally, summing the previous expression with the last term in (\ref{ps6}) we have that (\ref{ps5}) is equal to
$$
\frac{ (q_{\lambda}(\zeta_{n}-1))^d - (1-p_{\zeta_{n}}) }{ p_{\zeta_{n}} } \,
(q_{\lambda}(\zeta_{n}))^{a_{\zeta_{n}}(n)} = (q_{\lambda}(\zeta_{n}))^{a_{\zeta_{n}}(n)+1} \, ,
$$
Therefore,
\begin{eqnarray*}
v_{n+1} & = & v_0 \, (q_{\lambda}(\zeta_{n}))^{a_{\zeta_{n}}(n)+1} \, \prod_{r=\zeta_{n}+1}^{\left\lfloor \log_d(n) \right\rfloor + 1} (q_{\lambda}(r))^{a_{r}(n)} \\
& = & v_0 \prod_{r=1}^{\left\lfloor \log_d(n+1) \right\rfloor + 1} (q_{\lambda}(r))^{a_{r}(n+1)}  \, ,
\end{eqnarray*}
which, by induction, completes the proof of Claim 2.
\end{proof}

\bigskip
%

Denote by $\sigma_{\bar{p}}$ the spectrum of $S_{\bar{p}}$ in $l^\oo(\mathbb{Z}_+)$. We have proved in the previous Proposition that $\sigma_{\bar{p}} \supset E_{\bar{p}}=E$. In the next Proposition, we show that $\sigma_{\bar{p}} \subset E_{\bar{p}}$.

\bigskip

\begin{theorem}\label{spectrum}
The spectrum of $S_{\bar{p}}$ is equal to $E$.
\end{theorem}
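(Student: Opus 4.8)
The plan is to establish the reverse inclusion $\sigma_{\bar{p}} \subset E$; together with the previous Proposition, which gives $E \subset \sigma_{\bar{p}}$, this yields the theorem. Since $S$ is stochastic we have $\|S\|_{l^\infty \to l^\infty} = 1$, hence $\sigma_{\bar{p}} \subset \overline{\mathbb{D}(0,1)}$, and it suffices to prove that every $\lambda \in \overline{\mathbb{D}(0,1)} \setminus E$ belongs to the resolvent set, i.e. that $\lambda I - S$ is a bijection of $l^\infty(\mathbb{Z}_+)$ (the inverse is then bounded by the bounded inverse theorem). Injectivity is immediate: if $(\lambda I - S)v = 0$ with $v \in l^\infty$, then $v$ is a right eigenvector of $S$ with eigenvalue $\lambda$, so by the previous Proposition it is a scalar multiple of the sequence in (\ref{ps1}); by Claim 1 of that proof this sequence is unbounded when $\lambda \notin E$, so $v = 0$.

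The core of the argument is surjectivity: given $w \in l^\infty(\mathbb{Z}_+)$, one must exhibit $v \in l^\infty(\mathbb{Z}_+)$ with $(\lambda I - S)v = w$, together with a uniform bound $\|v\|_\infty \le C(\lambda)\|w\|_\infty$. I would start from the representation (\ref{ps4}) of $(Sv)_n$: since $\prod_{j=1}^{\zeta_n} p_j > 0$, the equation in row $n$ can be solved for $v_{n+1}$ as an affine function of $v_0, \dots, v_n$ and $w_n$. Thus the solution set of $(\lambda I - S)v = w$ in $\mathbb{C}^{\mathbb{Z}_+}$ is the affine line $\{\, v_0\,\phi + \psi(w) : v_0 \in \mathbb{C} \,\}$, where $\phi$ is the homogeneous solution normalised by $\phi_0 = 1$ — so $\phi_n = \prod_{r=1}^{\lfloor \log_d n \rfloor + 1} q_\lambda(r)^{a_r(n)}$ by the previous Proposition — and $\psi(w)$ is the particular solution with $\psi(w)_0 = 0$, depending linearly on $w$. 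It remains to show that exactly one value of $v_0$, depending linearly and boundedly on $w$, makes the corresponding $v$ bounded (uniqueness of such $v_0$ is clear, since $\phi \notin l^\infty$ when $\lambda\notin E$).

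To pin down $v_0$ and obtain the estimate I would use the self-similarity relations for the transition probabilities recorded in Section~\ref{sec:chain}: on each block $\{ d^k, \dots, d^{k+1} - 1 \}$ the recursion for $v$ is a rescaled copy of the base-level recursion, with the homogeneous mode amplified by factors comparable to $q_\lambda(k+1)$. Because $\lambda \notin E$, Claim 1 together with (\ref{julia1}) gives $|q_\lambda(r)| \to +\infty$; indeed $\tilde{f}_r(\lambda) = q_\lambda(r)^d$ and $q_\lambda(r+1) = (q_\lambda(r)^d - (1-p_{r+1}))/p_{r+1}$, so past the finitely many indices with $|q_\lambda(r)| \le 1$ the growth is super-exponential (of iterated $d$th-power type). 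This forces a unique $v_0$ — expressible as an absolutely convergent series in the entries of $w$ weighted by products of the $q_\lambda(r)^{-1}$ — for which the amplifying mode is killed at every scale, and for that $v_0$ one estimates $v$ scale by scale. The main obstacle is precisely this last uniform estimate: one cannot crudely bound $|v_n|$ by $|\phi_n|$ times a tail series (far too lossy once $d \ge 3$), but must track, for each $n$, the true size of $|\phi_n|$ prescribed by the $d$-adic digits of $n$ and organise the multi-scale summation so that the super-fast growth of $|q_\lambda(r)|$ controls the contribution of the high digits; the finitely many initial scales on which $q_\lambda(r)$ is small or vanishes are handled separately, affecting only a bounded initial segment of $v$. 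Once $\|v\|_\infty \le C(\lambda)\|w\|_\infty$ is proved, $\lambda I - S$ is a bounded bijection, so $\lambda \in \rho(S)$; hence $\sigma_{\bar{p}} \subset E$, and with the previous Proposition $\sigma_{\bar{p}} = E$.
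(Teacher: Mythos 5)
Your overall frame---show that every $\lambda\in\overline{\mathbb{D}(0,1)}\setminus E$ lies in the resolvent set by proving $\lambda I-S$ is a bijection of $l^\infty(\mathbb{Z}_+)$, with injectivity coming from Claim 2 of the preceding Proposition (any solution of $Sv=\lambda v$ is determined by $v_0$ via (\ref{ps1})) together with the unboundedness of that sequence when $\lambda\notin E$---is legitimate, and it is a genuinely different route from the paper, which never constructs a resolvent: there one passes to $\tilde{S}_{\bar{p}}=(S_{\bar{p}}-(1-p_1)Id)/p_1$, shows in Lemma \ref{ikk} that $\tilde{S}_{\bar{p}}^{\,d}$ acts on the $d$ residue classes mod $d$ as copies of $S_{\bar{p}_2}$, hence $\sigma(\tilde{S}_{\bar{p}}^{\,d})=\sigma(S_{\bar{p}_2})$, and then iterates the Spectral Mapping Theorem applied to $\tilde{S}_{\bar{p}}^{\,d}=\tilde{f}_1(S_{\bar{p}})$ to get $\tilde{f}_{j+1}(\sigma_{\bar{p}})=\sigma_{\bar{p}_{j+1}}\subset\overline{\mathbb{D}(0,1)}$ for all $j$, which is exactly $\sigma_{\bar{p}}\subset E$. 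The problem with your version is that in it the whole content of the inclusion $\sigma_{\bar{p}}\subset E$ is the surjectivity of $\lambda I-S$, and that is precisely what you do not prove. You correctly reduce the solution set of $(\lambda I-S)v=w$ in $\mathbb{C}^{\mathbb{Z}_+}$ to the affine family $v_0\phi+\psi(w)$ (each row of (\ref{ps4}) can be solved for $v_{n+1}$ since $\prod_{j=1}^{\zeta_n}p_j>0$), but then you merely assert that a unique $v_0$, given by an absolutely convergent series in the entries of $w$, renders the solution bounded, and you explicitly defer the multi-scale estimate as ``the main obstacle.'' Both $\phi$ and the particular solution $\psi(w)$ are generated by the same unstable recursion, so boundedness of $v_0\phi+\psi(w)$ requires cancellations at every scale simultaneously; proving that the series defining $v_0$ converges, that the resulting $v$ is bounded (or at least bounded for every fixed $w\in l^\infty$, after which the bounded inverse theorem gives the norm estimate), is the actual work, and none of it is carried out. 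As written, the proposal establishes injectivity but not surjectivity, so it does not yield $\sigma_{\bar{p}}\subset E$.

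There is also a concrete inaccuracy in the part of the plan meant to dispose of the degenerate scales. You say the finitely many indices $r$ with $q_\lambda(r)$ small or zero affect ``only a bounded initial segment of $v$.'' That is false: if $q_\lambda(r_0)=0$ (which can happen with $\lambda\notin E$, since $\tilde{f}_{r_0}(\lambda)=q_\lambda(r_0)^d=0$ may later escape when some $p_j<1/2$), then the factor $q_\lambda(r_0)^{a_{r_0}(n)}$ kills $\phi_n$ for every $n$ whose $r_0$-th digit is nonzero---a set of density $(d-1)/d$, recurring in every block, not an initial segment---so on those indices the homogeneous mode cannot be used to compensate $\psi(w)$ at all, and the ``kill the amplifying mode by choosing $v_0$'' picture breaks down there. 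This may well be repairable by exploiting the block self-similarity more carefully, but it is part of the estimate you would have to organize, not a finite-index nuisance. If you want a complete proof along resolvent-free lines, the paper's argument via Lemma \ref{ikk} and the spectral mapping theorem is the efficient way to convert the self-similarity you invoke into the inclusion $\sigma_{\bar{p}}\subset E$.
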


\begin{proof}
We prove here that $\sigma_{\bar{p}} \subset E$. Let us denote by $\tau:\mathbb{Z}_+ \ra \mathbb{Z}_+$ the  map $\tau(n)=n+1$ and $\bar{p}_n := (p_{n+j})_{j=0}^\oo$ for
a given $n\in\N$.

Denote $\tilde{S}_{\bar{p}}$ the operator
$$
\tilde{S}_{\bar{p}} := \frac{S_{\bar{p}} - (1-p_1)Id}{p_1} \, ,
$$
which is also a stochastic operator acting on $\mathbb{Z}_+$.  It is associated to an irreducible  Markov chain with period $d$. Indeed let $(Y_n)_{n \ge 0}$ be a Markov chain having $\tilde{S}_{\bar{p}}$ as its transition operator. Then $(Y_n)_{n \ge 0}$ evolves as an $\rm{AMFC}_d$ Markov chain with $p_1 = 1$, thus the chain cannot remain at the same state during two consecutive times. Moreover, $\sum_{j=1}^r (d-1) d^{j-1}$ is equal to $d-1$ modulus $d$ for every $r \ge 1$, then, from (\ref{tp1}), we have that $Y_{n+1}$ is always equal to $Y_n + 1$ modulus $d$. Thus $(Y_n)_{n \ge 0}$ can only return to its starting state at times that are multiples of $d$. Since the probability of $\{Y_d = 0\}$ given $\{Y_0 = 0\}$ is $(1-p_2)>0$, we have that $(Y_n)_{n \ge 0}$ has period $d$.

In particular $\tilde{S}_{\bar{p}}^d$ has $d$ recurrent communication classes that are
$$
C_n= \{ \ j\in \mathbb{N} \ : \ j\equiv n \mod d \} , \quad 0\le n \le d-1 \, .
$$
We now show that $\tilde{S}_{\bar{p}}^d$ acts on each of these classes as a copy of $S_{\bar{p}_2}$ 
(See Remark \ref{rem:per-d} for a rather intuitive justification).

Indeed, for all  $0\le n \le d-1$, we consider the map $\Pi_n: l^{\infty} \to l^{\infty} $  defined
for all $v= (v_{k})_{k \geq 0} \in l^{\infty}$ by
$$(\Pi_n (v))_j= v_k \mbox { if } j= n+ kd \in C_n,\; k \in \mathbb{Z}^+ \mbox { and } 0 \mbox { otherwise }.$$

\begin{lemma}
\label{ikk} With the above notations,
the following properties  hold:
\begin{enumerate}
\item
$\tilde{S}_{\bar{p}} \circ \Pi_{0} = \Pi_{d-1}\circ S_{\bar{p}_2}  \mbox { and } \tilde{S}_{\bar{p}}\circ \Pi_{n} = 
\Pi_{n-1} ,\; \forall\  1 \leq n \leq d-1. $
\item
$\tilde{S}_{\bar{p}}^ {d}= \sum_{i=0}^{d-1} \Pi_i \circ S_{\bar{p}_2} \circ F_i$,
where $F_{i}(v)= (v_i, v_{i+d}, \ldots, v_{i+nd}, \ldots )$ for all $v \in l^{\infty}$ and $i \in \{0,1,\ldots, d-1\}$.
\item
$\sigma (\tilde{S}_{\bar{p}}^ {d})= \sigma ( S_{\bar{p}_2}).$

\end{enumerate}

\end{lemma}

\begin{proof}

Let $v= (v_k)_{k \geq 0} \in l^{\infty},\;i \in \mathbb{Z}^{+}$ and $n \in \{0,\ldots, d-1\}.$
We have
\begin{eqnarray}
\label{tildeS}
(\tilde{S}_{\bar{p}} \circ \Pi_{n} v)_i=  \sum_{j=0}^{+\infty} (\tilde{S}_{\bar{p}})_{i,j}(\Pi_{n}v)_j=  \sum_{k=0}^{+\infty}(\tilde{S}_{\bar{p}})_{i,n+kd} v_k.
\end{eqnarray}
For all $1\leq n  \leq d-1$,  $(\tilde{S}_{\bar{p}})_{i,n+kd}=  1$ whenever  $i= n-1+ kd$ and $0$ otherwise. 
Hence $\tilde{S}_{\bar{p}} \circ \Pi_{n} = \Pi_{n-1}.$ \\
If $ n=0,\; (\tilde{S}_{\bar{p}})_{i,n+kd}= (\tilde{S}_{\bar{p}})_{i,kd} \ne 0.$\\ 
{\bf - Case 1}:  $i= (d-1)+ md,\; m \in \mathbb{N}$.
\\
By (\ref{tp1}),  $(S_{\bar{p}})_{(d-1)+ md,kd}= p_1  (S_{\bar{p}_{2}})_{m,k}$ then 
\begin{eqnarray}
\label{zs}
(\tilde{S}_{\bar{p}})_{(d-1)+ md,kd}=   (S_{\bar{p}_2})_{m,k}.
 \end{eqnarray}
 By (\ref{tildeS}) and (\ref{zs}), we deduce that
 $(\tilde{S}_{\bar{p}} \circ \Pi_{0})_i = (\Pi_{d-1}\circ S_{\bar{p}_2})_i.$\\
 {\bf - Case 2}:  $i \not \in (d-1)+ d \mathbb{Z}^{+}$.\\
Then  $(S_{\bar{p}})_{i,kd}=0$ for all $k$. Hence 
$(\tilde{S}_{\bar{p}} \circ \Pi_{0})_i =
0= (\Pi_{d-1}\circ S_{\bar{p}_2})_i$.
Thus $\tilde{S}_{\bar{p}} \circ \Pi_{0} = \Pi_{d-1}\circ S_{\bar{p}_2}.$ 
4
 As regard item 2) of Lemma~\ref{ikk}, from  1) we can easily prove that for all integers $0 \leq n \leq d-1 $
 and $1 \leq i \leq d$, we have
 $$
\tilde{S}_{\bar{p}}^ {i} \circ \Pi_n = \left\{
\begin{array}{cl}
\Pi_{n-i} & \mbox { if }   i \leq n \\
\Pi_{d+n-i} \circ S_{\bar{p}_2} & \mbox { if }    i > n \, .
\end{array}
\right.
$$
Hence
 \begin{eqnarray}
 \label{rrp}
 \tilde{S}_{\bar{p}}^ {d} \circ \Pi_n=  \Pi_n \circ S_{\bar{p}_2},\; \forall 0 \leq n \leq d-1  .
 \end{eqnarray}
  Using (\ref{rrp}) and the fact that $Id= \sum_{i=0}^{d-1} \Pi_i \circ F_i$, we obtain of 2) Lemma~\ref{ikk}.

 It remains to show the last item. By 2), we have
 \begin{eqnarray}
 \label{xxc}
 (\tilde{S}_{\bar{p}}^ {d} - \lambda Id)= \sum_{i=0}^{d-1} \Pi_i \circ ( S_{\bar{p}_2} - \lambda Id) \circ F_i,
 \end{eqnarray}
 for $\lambda \in \mathbb{C}$.\\
 {\bf Claim: } $(\tilde{S}_{\bar{p}}^ {d} - \lambda Id)$ is bijective if and only if 
 $( S_{\bar{p}_2} - \lambda Id)$ has the same property. 
 We will show that these 2 maps are both injective and onto for the same values of $\lambda$.
 
 Indeed, assume there exists 
 $v= (v_i)_{i \geq 0} \in l^{\infty} \setminus \{0\}$ such that $(S_{\bar{p}_2} - \lambda Id)(v)=0$.
 We consider $w= (w_{i})_{i \geq 0}$ defined by
 $w_i= v_{[i/d]}$ where $[i/d]$ denotes the integer part of $i/d$.
 We have $ w
 = (\underbrace{v_0,\ldots v_0}_{d},\ldots, \underbrace{v_i \ldots v_i}_d, \ldots ) \in l^{\infty}\setminus \{0\}$ and $(\tilde{S}_{\bar{p}}^ {d} - \lambda Id)w= \sum_{i=0}^{d-1} \Pi_i \circ ( S_{\bar{p}_2} - \lambda Id) (v)= 0.$ 
 
 On the other hand, if  $(\tilde{S}_{\bar{p}}^ {d} - \lambda Id) (u)= 0$ for some $u= (u_{i})_{i \geq 0} \in l^{\infty}\setminus \{0\}$. By (\ref{xxc}), we have
 $( S_{\bar{p}_2} - \lambda Id) \circ F_i (u)= 0$ for all $i \in \{0,\ldots d-1\}$. 
 Hence $(\tilde{S}_{\bar{p}}^ {d} - \lambda Id)$ is one to one if and only if $(S_{\bar{p}_2} - \lambda Id)$
 is.
 
 Now, suppose that $(\tilde{S}_{\bar{p}}^ {d} - \lambda Id)$ is onto.
 Let $v= (v_i)_{i \geq 0} \in l^{\infty} $ and $v'
 = (\underbrace{v_0,\ldots v_0}_{d},\ldots, \underbrace{v_i \ldots v_i}_d, \ldots )$.
 Then there exists $u \in l^{\infty}$ such that  $(\tilde{S}_{\bar{p}}^ {d} - \lambda Id) (u)= v'$.
 Hence $(S_{\bar{p}_2} - \lambda Id) \circ F_i (u)=v$ for all $i=0,\ldots, d-1$. Thus $S_{\bar{p}_2} - \lambda Id$ is onto.
 
 Now assume  that $S_{\bar{p}_2} - \lambda Id$ is onto. Let $v= (v_i)_{i \geq 0} \in l^{\infty} $ 
 and  $u^{(k)},\; k=0,\ldots, d-1$, elements of $l^{\infty}$ such that 
  $(S_{\bar{p}_2} - \lambda Id)(u^{(k)})= F_k (v)$.
  Let $w$ be the unique element in  $l^{\infty}$ such that $F_{k}(w)= u^{(k)}$ for all $ k \in \{0,\ldots, d-1\}$.
  Hence $(\tilde{S}_{\bar{p}}^ {d} - \lambda Id)(w)= \sum_{i=0}^{d-1} \Pi_i \circ F_i (v)= v$.
  Thus   $(\tilde{S}_{\bar{p}}^ {d} - \lambda Id)$ is onto and we obtain the claim.

\end{proof}

\noindent
{\bf End of the  proof of Theorem \ref{spectrum}}.
By item 3 of Lemma  \ref{ikk}, the spectrum of $\tilde{S}_{\bar{p}}^d$ is equal to the spectrum of $S_{\bar{p}_2}$. Since, $\tilde{S}_{\bar{p}}^d = \tilde{f}_{1}\big( S_{\bar{p}} \big)$, by the Spectral Mapping Theorem(see \cite{ds}),  we have that
$$
\tilde{f}_{1}\big( \sigma_{\bar{p}} \big) = \sigma_{\bar{p}_2} \, .
$$
By induction, we deduce that
$$
\tilde{f}_{j+1}\big( \sigma_{\bar{p}} \big) = \sigma_{\bar{p}_{j+1}} \, ,
$$
for every $j\ge 1$. Since $S_{\bar{p}_{j+1}}$ is a stochastic operator, its spectrum is a subset of $D(0,1)$. Therefore
$$
| \tilde{f}_{j+1}\big( \lambda \big) | \le 1 \, ,
$$
for every $j$ and $\lambda \in \sigma_{\bar{p}}$. This implies that $\sigma_{\bar{p}} \subset E$.
\end{proof}

\smallskip

\begin{remark} \label{rem:per-d}
 let us give a rather intuitive justification to the fact that $\sigma (\tilde{S}_{\bar{p}}^ {d})= \sigma ( S_{\bar{p}_2}).$ Recall that $\tilde{S}_{\bar{p}}$ is the transition operator of an $\rm{AMFC}_d$ Markov chain with $p_1 = 1$. Each time we apply $\tilde{S}_{\bar{p}}$ to a given $n \ge 1$, we add one to $a_1(n)$, the first digit of $n$ in its $d$-adic expansion. Iterating $\tilde{S}_{\bar{p}}$ $d$-times, we necessarily end at a non-negative integer $m$ with $a_1(m)=a_1(n)$, i.e., $n = m$ modulus $d$. Moreover, at one single iteration, among these $d$ iterations of $\tilde{S}_{\bar{p}}$, we have the first digit of the outcome equal to $d-1$, and $\tilde{S}_{\bar{p}}$ will act on the shifted sequence of digits $(a_j(n))_{j\ge 2}$ as an $\rm{AMFC}_d$ algorithm associated to $\bar{p}_2$. Therefore, after the $d$ iterations of $\tilde{S}_{\bar{p}}$, equivalent to one iteration of $\tilde{S}_{\bar{p}}^d$, we obtain that the outcome $m$ have $a_1(m)=a_1(n)$ and $(a_j(m))_{j\ge 2}$ randomly generated by $(a_j(n))_{j\ge 2}$ through the $\rm{AMFC}_d$ algorithm associated to $\bar{p}_2$. 
 \end{remark}

\begin{remark}
Let $(X(t))_{t \ge 0}$ be an irreducible recurrent $\rm{AMFC}_d$ Markov chain starting at $0$.
Suppose that $\lambda \in \mathbb{R} \cap E$, $\lambda \neq 1$ and $v = (v_n)_{n\ge 0}$ is an eigenvector associated to $\lambda$ with $v_0 > 0$. Since $E \subset D(0,1)$, then $v$ is a super-harmonic function on $l^\infty (\mathbb{Z}_+)$ with respect to $S$, i.e., $(Sv)_n \le v_n$, for all $n \ge 0$. Therefore, by (\ref{ps1}) and from the Potential Theory for Markov chains, we have that $(v_{X(t)})_{t \in \mathbb{Z}_+}$ is a bounded supermartingale. If $(v_{X(t)})_{t \in \mathbb{Z}_+}$ is positive, by the convergence theorem for supermartingales, we have that there exists a bounded $\mathbb{Z}_+$ valued random variable $v_\infty$ such that $\lim_{t \rightarrow \infty} v_{X(t)} = v_\infty$ almost surely. Since the chain is irreducible and recurrent, this can happen only if $v_\infty$ is constant. But for $\lambda \neq 1$,  by (\ref{ps1}), $v_\infty$ is non-constant. Therefore, either
$\mathbb{R} \cap E = \{1\}$ or $v_n < 0$ for some $n$. However we have $v_n < 0$ for some $n$ if and only if $\tilde{f}_{j}(\lambda) < 0$ for some $j\ge 1$, which does not happen for a constant sequence $p_i = p$, with $p \ge \frac{1}{2}$ (by Lemma \ref{bound} $\tilde{f}_{j}(\lambda) \in \bar{\mathbb{D}}(1/2,1/2)$). Thus if $p \ge \frac{1}{2}$, $\mathbb{R} \cap E = \{1\}$.
\end{remark}

\smallskip

\section{Connectedness properties of the spectra}
\label{sec:connectedness}

This section is devoted to the study of the connectedness of $E$ and its complement. According to the parity of $d$, we obtain conditions for $E$ to be connected, to have a finite number of connected components or to be a Cantor set.

\smallskip

\begin{proposition}
\label{propjulia}
The set $\mathbb{C} \setminus E$ is connected.
 \end{proposition}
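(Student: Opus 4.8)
The plan is to show that $\mathbb{C}\setminus E$ is connected by exhibiting it as an increasing union of connected open sets, exploiting the description of $E$ from Corollary \ref{cor:bound} as a nested intersection of preimages of closed disks. Write $K_0 = \overline{\mathbb{D}(0,1)}$ and, for $j\ge 1$, $K_j = \tilde f_j^{-1}\big(\overline{\mathbb{D}(0,1)}\big)$, so that $E = \bigcap_{j\ge 0} K_j$ and $\mathbb{C}\setminus E = \bigcup_{j\ge 0} \big(\mathbb{C}\setminus (K_0\cap\cdots\cap K_j)\big)$. Since this is an increasing union of open sets, by a standard connectedness lemma it suffices to prove that each $U_j := \mathbb{C}\setminus(K_0\cap\cdots\cap K_j)$ is connected and that $U_0\subset U_1\subset\cdots$ (the inclusions being obvious). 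Actually it is cleaner to argue directly that $\mathbb{C}\setminus E$ is connected by first treating the complement of a single nested piece: I would prove by induction on $j$ that $\mathbb{C}\setminus (K_0\cap\cdots\cap K_j)$ is connected, in fact that it is the complement of a full compact set (a compact set with connected complement).

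\textbf{Key steps.} First, the base case: $\mathbb{C}\setminus K_0 = \mathbb{C}\setminus\overline{\mathbb{D}(0,1)}$ is connected. Second, the inductive step is where the real content lies. Suppose $L_{j-1} := K_0\cap\cdots\cap K_{j-1}$ is a compact set with connected complement; I want the same for $L_j = L_{j-1}\cap K_j = L_{j-1}\cap \tilde f_j^{-1}(\overline{\mathbb{D}(0,1)})$. The map $\tilde f_j = f_j\circ\cdots\circ f_1$ is a polynomial of degree $d^j$. The cleanest route: observe that $\tilde f_j$ restricted to $L_{j-1}$ already takes values near where it matters — in fact by Lemma \ref{bound} iterated, on $L_{j-1}$ (which sits inside $\overline{\mathbb{D}(1-p_1,p_1)}\cap\cdots$) the map $\tilde f_{j-1}$ lands in $\overline{\mathbb{D}(1-p_j,p_j)}$, and then $f_j$ sends this disk onto $\overline{\mathbb{D}(0,1)}$. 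So on $L_{j-1}$, the extra constraint $\tilde f_j\in\overline{\mathbb{D}(0,1)}$ is equivalent to $\tilde f_{j-1}\in \overline{\mathbb{D}(1-p_j,p_j)}$. The point is then: the complement of the set $\{z: \tilde f_{j-1}(z)\in\overline{\mathbb{D}(1-p_j,p_j)}\}$ inside the already-known-connected complement of $L_{j-2}$... This suggests inducting on a slightly different statement. I would instead set it up as: for each $j$, define $A_j = \overline{\mathbb{D}(1-p_{j+1},p_{j+1})}\cap\bigcap_{i\ge 1}(f_{j+i}\circ\cdots\circ f_{j+1})^{-1}(\overline{\mathbb{D}(0,1)})$, i.e. the filled Julia set $E_{\bar p_{j+1}}$ for the shifted sequence, and note $f_{j+1}(A_{j+1})\subset A_j$ and, more importantly, that $A_j = f_{j+1}^{-1}(A_{j+1})\cap \overline{\mathbb{D}(1-p_{j+1},p_{j+1})}$. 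Then run a ``pullback'' argument: $\mathbb{C}\setminus A_j$ connected would follow from $\mathbb{C}\setminus A_{j+1}$ connected if $f_{j+1}$ behaved like a branched cover over the relevant region. Since $f_{j+1}(z) = ((z-(1-p_{j+1}))/p_{j+1})^d$ is conjugate by the affine map $h_{j+1}$ to $w\mapsto w^d$, and $w\mapsto w^d$ is a proper map $\mathbb{C}\to\mathbb{C}$ for which the preimage of any full compact set is full, the inductive step goes through. Concretely: $z\mapsto z^d$ has the property that $\mathbb{C}\setminus(z^d)^{-1}(X)$ is connected whenever $\mathbb{C}\setminus X$ is connected (the preimage of a connected-complement compact set under a polynomial with connected complement of its own filled Julia set is again such — or more elementarily, $\mathbb{C}\setminus\{z: z^d\in X\}$ is the image under the covering of an annular-type region). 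Finally, pass to the limit: $\mathbb{C}\setminus E = \mathbb{C}\setminus\bigcap_j L_j = \bigcup_j(\mathbb{C}\setminus L_j)$ is an increasing union of connected sets sharing the point at $\infty$ (all contain $\mathbb{C}\setminus\overline{\mathbb{D}(0,1)}$), hence connected.

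\textbf{Main obstacle.} The delicate point is the inductive step — showing that pulling back a full compact set under $f_{j+1}$ (equivalently under $w\mapsto w^d$ after affine conjugation) keeps the complement connected. For $w\mapsto w^d$ this is classical: if $\mathbb{C}\setminus X$ is connected and $X\subset\overline{\mathbb{D}(0,1)}$, then $\{w: w^d\in X\}$ has connected complement because the $d$ ``sheets'' over $\mathbb{C}\setminus X$ are glued together through a neighborhood of $\infty$ (where $w^d$ is a $d$-fold cover of a neighborhood of $\infty$, which is connected), so its complement $\mathbb{C}\setminus\{w:w^d\in X\}$, being the $d$-fold cover of the connected set $\mathbb{C}\setminus X$ minus... — one must check it does not disconnect, which follows since $\infty$ is a single point with connected punctured neighborhood in the preimage. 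I would make this precise via the standard fact that a proper polynomial map sends sets with connected complement to sets with connected complement and, dually, that the preimage of a polynomially convex compact set under $z\mapsto z^d$ is polynomially convex; alternatively invoke that $\mathbb{C}_\infty\setminus(\text{preimage})$ is connected because $\mathbb{C}_\infty\setminus X$ is a connected open set on which $w\mapsto w^d$ is a proper covering (possibly branched only at $0$, which may be removed harmlessly since $0\in\overline{\mathbb{D}(0,1)}$ can be assumed in $X$ or handled separately), and a branched cover of a connected set is connected. Packaging this cleanly, rather than any single computation, is the crux.
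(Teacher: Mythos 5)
Your proposal is correct and follows essentially the same route as the paper: by Corollary \ref{cor:bound} one writes $E=\bigcap_n \tilde f_n^{-1}\big(\overline{\mathbb{D}(0,1)}\big)$ with nested terms, so $\mathbb{C}\setminus E$ is an increasing union of the open sets $\mathbb{C}\setminus\tilde f_n^{-1}\big(\overline{\mathbb{D}(0,1)}\big)$, each of which is connected because $\tilde f_n$ is a polynomial, and an increasing union of connected sets is connected. The only difference is that the paper invokes this last fact directly (it is a one-line consequence of the maximum principle: a bounded component of $\{|\tilde f_n|>1\}$ is impossible), whereas your detour through the shifted filled Julia sets $A_j$ is unnecessary — and, taken literally as an induction, would have no base case — so you should replace it by the finite-level statement you already formulated for the sets $L_j$.
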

 \begin{proof}

By Lemma \ref{bound} and Corollary~\ref{cor:bound}, we have $E= \bigcap_{n=1}^{+\infty} \tilde{f}_n ^{-1} \big( \overline{\mathbb{D}(0,1)} \big)$ where $\tilde{f}_{n+1}^{-1} \big(\overline{\mathbb{D} (0,1)} \big) \subset \tilde{f}_{n} ^{-1} \big( \overline{\mathbb{D}(0,1)} \big)$ for every integer $n \geq 1$. Then
  $\mathbb{C} \setminus  E= \bigcup_{n=1}^{+ \infty} \mathbb{C} \setminus  \tilde{f}_n ^{-1} \big( \overline{\mathbb{D}(0,1)} \big).$
  Since for any integer $n, f_n$ is a polynomial function, then $ X_n= \mathbb{C} \setminus \tilde{f}_n ^{-1} \big( \overline{\mathbb{D}(0,1)}\big)$ is a connected set.
  Since $X_n$ is an increasing sequence, we deduce that $\mathbb{C} \setminus E$ is also a connected set.
\end{proof}

\smallskip

We need to introduce some new notation. Let $g_{j}:  \mathbb{C} \ra \mathbb{C},\; j \ge 2$, be the function defined by
$$
g_{j}(z) := \frac{1}{p_{j}}z^d -\frac {(1-p_j)}{p_j},
$$
and  $\tilde{g}_{j} := g_{j+1} \comp ... \comp g_{2}$.
Let
\begin{eqnarray}
\label{E}
K = \Big\{ z \in \mathbb{C} : \limsup_{j\ra +\oo} |\tilde{g}_{j}(z)| < +\oo \Big\}.
\end{eqnarray}

 Recall that $h_j(z)=\frac{z}{p_j}-\frac{1-p_j}{p_j}$.
 The functions $g_{j+1}$ and $f_j$ are conjugated in the following sense
\begin{equation}
\label{conjugacy}
g_{j+1}=h_{j+1}\circ f_j\comp h_j^{-1}.
\end{equation}
Since
 $\tilde{g}_{j} (h_1(z))= h_{j+1}\comp \tilde{f}_{j} (z)$ for all $j \geq 2$,
  we deduce that
 \begin{eqnarray}
\label{if}
  \mbox { if } \liminf_{i \ra \infty} p_i >0, \mbox { then }  E := h_1^{-1} (K) \, .
  \end{eqnarray}

According to Lemma \ref{bound}, since $h_1$ maps the disk $\overline{\mathbb{D}(1-p_1,p_1)}$ to $\overline{\mathbb{D}(0,1)}$, we have that if $ \liminf_{i \ra \infty} p_i >0$, then  $K$ is also included
in the closed disk $\overline{\mathbb{D}(0,1)}$. Indeed, by the Lemma \ref{bound}, we have an analog of Corollary \ref{cor:bound}.
 In  the case where $\liminf_{i \ra \infty} p_i =0$,  the same results are true, and we have $h_1^{-1} (K) \subset E$.

\smallskip

\begin{lemma}
\label{RR}
Let $R >1$   then $K= \bigcap_{n=2}^{+\infty} \tilde{g}_{n}^{-1} (\mathbb{D}(0,R))$ where
$\tilde{g}_{n+1} ^{-1} (\mathbb{D}(0,R)) \subset \tilde{g}_{n} ^{-1} (\mathbb{D}(0,R))$ for every integer $n \geq 2$.

\end{lemma}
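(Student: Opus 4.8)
The plan is to mimic the proof of Lemma~\ref{bound} and Corollary~\ref{cor:bound}, working directly with the polynomials $g_j$. Everything rests on one elementary estimate: for every $j\geq 2$ and every $w\in\mathbb{C}$ with $|w|>1$,
$$
|g_j(w)|=\left|\frac{w^d-(1-p_j)}{p_j}\right|\geq\frac{|w|^d-(1-p_j)}{p_j}=\frac{|w|^d-1}{p_j}+1\geq |w|^d>|w|>1,
$$
where the penultimate inequality uses $0<p_j\leq 1$ (so $1/p_j\geq 1$) together with $|w|^d-1>0$, and the last one uses $d\geq 2$. This is the analogue of $(\ref{LowerBound})$. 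Since $\tilde{g}_{n+1}=g_{n+2}\comp\tilde{g}_n$ for all $n\geq 1$, an immediate induction gives: if $|\tilde{g}_{n_0}(w)|>1$ for some $n_0\geq 1$, then $|\tilde{g}_{n_0+k}(w)|\geq |\tilde{g}_{n_0}(w)|^{d^k}$ for all $k\geq 0$, hence $|\tilde{g}_j(w)|\ra +\oo$ as $j\ra+\oo$. This is the analogue of $(\ref{julia1})$.

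Next I establish the nesting property. Fix $n\geq 2$ and let $z\in\tilde{g}_{n+1}^{-1}(\mathbb{D}(0,R))$; put $w=\tilde{g}_n(z)$, so that $|g_{n+2}(w)|=|\tilde{g}_{n+1}(z)|<R$. If $|w|\geq R$, then, since $R>1$, the estimate above yields $|g_{n+2}(w)|\geq|w|^d\geq|w|\geq R$, a contradiction. Hence $|\tilde{g}_n(z)|=|w|<R$, i.e.\ $z\in\tilde{g}_n^{-1}(\mathbb{D}(0,R))$. Thus $\tilde{g}_{n+1}^{-1}(\mathbb{D}(0,R))\subset\tilde{g}_n^{-1}(\mathbb{D}(0,R))$ for every $n\geq 2$.

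It remains to prove $K=\bigcap_{n\geq 2}\tilde{g}_n^{-1}(\mathbb{D}(0,R))$. The inclusion $\supseteq$ is immediate: if $|\tilde{g}_n(w)|<R$ for all $n\geq 2$, then $\limsup_{n\ra+\oo}|\tilde{g}_n(w)|\leq R<+\oo$, so $w\in K$. For $\subseteq$, let $w\in K$ and suppose $|\tilde{g}_{n_0}(w)|>1$ for some $n_0\geq 2$; then $|\tilde{g}_j(w)|\ra+\oo$ by the first paragraph, contradicting $\limsup_j|\tilde{g}_j(w)|<+\oo$. Therefore $|\tilde{g}_n(w)|\leq 1<R$ for every $n\geq 2$, i.e.\ $w\in\tilde{g}_n^{-1}(\mathbb{D}(0,R))$ for all $n\geq 2$.

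There is no genuine difficulty here; the points requiring care are the index bookkeeping in $\tilde{g}_{n+1}=g_{n+2}\comp\tilde{g}_n$ and the uniformity in $j$ of the basic estimate, which holds precisely because $p_j\in(0,1]$ for every $j$. Note also that the hypothesis $R>1$ is used only through the implication $|w|\geq R\Rightarrow|w|>1$; the statement would fail for $R\leq 1$, since $g_j(1)=1$ for every $j$, so $1\in K$ while $|\tilde{g}_n(1)|=1\not<1$.
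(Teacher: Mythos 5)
Your proof is correct and is exactly the argument the paper intends: Lemma~\ref{RR} is stated without proof as the analogue for the $g_j$'s of Lemma~\ref{bound} and Corollary~\ref{cor:bound}, and you reproduce that argument faithfully, i.e.\ the escape estimate analogous to (\ref{LowerBound}) and (\ref{julia1}) giving both the nesting and the identification of $K$ with the intersection. Nothing further is needed.
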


For the study of connectedness of $K$, we will use the Riemann-Hurwitz-Formula (see ~\cite{Mil} for the formula
in the general  case and~\cite{ste} for a short proof in the case of plane domains).

\textsc{ Riemann-Hurwitz-Formula} : Let $V$ and $W$ be 
domains on the Riemann sphere and let $f : V\rightarrow W$ be a branched covering map of degree $d$ with $r$ 
critical points (counted with multiplicity). Then
$$\chi(V) = d\chi(W) - r,
$$
where $\chi(V)$ and $\chi(W)$ are the Euler characteristic of $V$ and $W$, respectively. 

This imply, if $V$ and $W$ are domains of $\C$ of connectivity $n$ and $m$ respectively that
$$m-2= d(n-2)+ r.$$
In particular, if $W$ is connected and $r=d-1$ then $V$ is connected.

\begin{remark}
\label{remark:RR}
\textbf{1.} The use of $\tilde{g}_j$ instead of $\tilde{f}_j$ is to simplify the study of some topological properties of $E$. Indeed, since $h_1$ is a linear homeomorphism from $\mathbb{C}$ to $\mathbb{C}$, we have when $\liminf_{i \ra \infty} p_i >0 $ that $E$ and $K$ are (really) the same up to a linear change of coordinates. \\
\textbf{2.} The analysis of the number of connected components of $E$ relies on the Riemann-Hurwitz Formula.  
It relates the number of connected components of $E$ (resp. $K$) with the number of critical points (counted with multiplicity) of $\tilde{f}_{n}$ (resp. $\tilde{g}_{n}$)  that do not belong to $E$ (resp. $K$). The critical points of $\tilde{f}_{n}$ (resp. $\tilde{g}_{n}$) are of the form $z \in \mathbb{C}$ such that
$z=1-p_1$ (resp. $z=0$) or $\tilde{f}_{k}(z)=1-p_{k+1}$ (resp. $\tilde{g}_{k}(z) = 0$) for some integer $ 1 \leq k < n$.

\end{remark}

\medskip

\subsection{Case where $p_i= p$ for all $i \geq 2$}

Here $g_j$ does not depend on $j$ and will be denoted by
$$
g(z) := \frac{1}{p}z^d -\frac {(1-p)}{p} \, , \ z \in \mathbb{C}.
$$
The polynomial map $g$ has a unique critical point at 0. Moreover, $g(1)=1$, and 1 is a repelling fixed point of $g$.

\begin{lemma}
\label{0inK}
Suppose that $p_i= p$ for all $i \geq 2$. If $0 \in K $ then $K$ is a connected set, otherwise $K$ is a Cantor set.
\end{lemma}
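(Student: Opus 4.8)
The plan is to reduce this to the classical dichotomy for the Julia set of a single polynomial $g$ of degree $d$, namely that the filled-in Julia set is connected when it contains the (unique) critical point and is totally disconnected (a Cantor set) otherwise. The set $K$ here is exactly the filled-in Julia set of $g$, since with $p_i = p$ constant we have $\tilde{g}_j = g^{\circ j}$, so $K = \{z : (g^{\circ j}(z))_j \text{ is bounded}\}$; the $\limsup$ condition together with Lemma~\ref{RR} identifies $K$ with $\bigcap_{n\ge 1} g^{-n}(\overline{\mathbb{D}(0,R)})$ for any $R>1$. I would state this identification explicitly at the outset, so that the remainder is a self-contained proof of the polynomial dichotomy adapted to our normalization.

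Next I would treat the connected case. Suppose $0 \in K$. Pick $R > 1$ large enough that $\overline{\mathbb{D}(0,R)}$ is forward-invariant under $g^{-1}$ in the appropriate sense, i.e. $g^{-1}(\overline{\mathbb{D}(0,R)}) \subset \mathbb{D}(0,R)$ (possible because $g$ is a polynomial of degree $d \ge 2$, so $|g(z)| \to \infty$; concretely any $R$ with $R^d/p - (1-p)/p > R$ works, and such $R$ exists). Set $U_0 = \mathbb{D}(0,R)$ and $U_{n+1} = g^{-1}(U_n)$, so that $K = \bigcap_n \overline{U_n}$ and each $U_n$ is open with $\overline{U_{n+1}} \subset U_n$. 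Each $U_n$ is connected: I argue by induction using the Riemann-Hurwitz formula as quoted in the excerpt. If $U_n$ is connected (hence $\chi$, i.e. connectivity, equals $1$) and if $0 \in U_n$ — which holds because $0 \in K \subset U_n$ — then the only critical point of $g$ lies in $U_n$, so $U_{n+1} = g^{-1}(U_n)$ has exactly $r = d-1$ critical points counted with multiplicity, whence the formula gives connectivity $1$ for $U_{n+1}$ as well (the remark after the Riemann-Hurwitz statement: if $W$ is connected and $r = d-1$ then $V$ is connected). Therefore $K$ is a decreasing intersection of compact connected sets (the $\overline{U_n}$), hence connected.

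For the Cantor case, suppose $0 \notin K$. Then there is a smallest $N$ with $0 \notin g^{-N}(\overline{\mathbb{D}(0,R)})$, equivalently $g^{\circ N}(0)$ escapes. Running the same construction, $U_0, \dots, U_N$ are connected and contain $0$, but $U_{N+1} = g^{-1}(U_N)$ no longer contains the critical point $0$, so $g : U_{N+1} \to U_N$ is an unbranched degree-$d$ covering; since $U_N$ is simply connected this forces $U_{N+1}$ to have $d$ connected components, each mapped homeomorphically onto $U_N$. Inductively, each subsequent preimage splits every component into $d$ pieces of shrinking diameter (the uniform shrinking comes from the expansion estimate already present in the paper: on $|z| > 1$, and more relevantly near $K$, $|g'|$ is bounded below, or one uses the Poincaré metric on $U_N \setminus K$ to get a definite contraction factor for the branches of $g^{-1}$). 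Hence $K = \bigcap_n \overline{U_n}$ is a nested intersection where at stage $N+k$ we have $d^k$ disjoint compact pieces of diameter tending to $0$; this is precisely the standard description of a Cantor set (nonempty — each branch of every inverse image is nonempty — perfect, compact, totally disconnected).

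The main obstacle is the Cantor-case diameter estimate: showing the components of $U_{N+k}$ shrink uniformly to points. The cleanest route is to fix a Jordan domain $W \supset K$ with $\overline{W} \subset U_N$ and $\overline{W} \cap \{0\} = \emptyset$ (possible since $0 \notin K$ and $K$ is compact), observe that $g^{-1}$ has $d$ well-defined holomorphic inverse branches on $W$ each mapping $W$ strictly inside itself away from the critical value, and invoke the Schwarz-Pick lemma for the hyperbolic metric of $W$ (or simply the fact that these branches are uniform contractions in the hyperbolic metric, hence eventually in the Euclidean metric on the compact set $K$) to conclude that compositions of $k$ such branches have diameter $O(c^k)$ with $c < 1$. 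I would present this as the key lemma and keep the verification brief, citing the standard theory of polynomial Julia sets (e.g. \cite{Mil}) for the fact that this yields a Cantor set, rather than reproving it from scratch.
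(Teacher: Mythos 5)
Your proposal is correct and takes essentially the same route as the paper: identify $K$ with the filled-in Julia set of the single unicritical polynomial $g$ (since $\tilde g_j=g^{\circ j}$), obtain connectedness from the Riemann--Hurwitz formula applied to the nested preimages of a disk when the critical point $0$ lies in $K$, and invoke the classical theory when $0$ escapes. The only difference is cosmetic: where you sketch the escape-time/inverse-branch contraction argument before deferring to the standard references, the paper simply cites the classical result (Berteloot--Mayer; Morosawa--Nishimura--Taniguchi--Ueda) that a polynomial all of whose critical points lie in the basin of infinity has a Cantor Julia set equal to its filled-in Julia set.
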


\medskip

\begin{proof}
The main point of this  lemma is that $g$ is a uni-critical polynomial with its critical point at $0$.\\
If $0\in K$, then all  critical points of $g$ belong to $K$ and by Lemma~\ref{RR} and  Riemann-Hurwitz Formula, we deduce that $K$ is connected.\\
Otherwise, $0\notin K$ then all  critical points of $g$ belong to the immediate bassin of $\infty$. By a classical
result of complex dynamics, see \cite{bv} Theorem 6.4 and also \cite{MNTU} Theorem 1.1.6, $g$ is a hyperbolic polynomial and its Julia set
is a Cantor set.
 
\end{proof}
\smallskip

\begin{proposition}
\label{prop:mudreg}
Let $p \in (0,1)$ be a fixed real number and suppose that $p_j = p$ for all $j \ge 2$, then the following properties are satisfied:
\begin{enumerate}
  \item[(i)]
  If $d$ is even, then
$E$ is connected if and only if  $p \geq \frac{1}{2}$, otherwise $E$ is a Cantor set.
  \item[(ii)]
    If $d$ is odd, then
$E$ is connected, if and only if, $p \geq \vartheta_d = d \theta_d^{d-1}$ where $\theta_d \in (0,1)$ is the
unique non-negative solution of
\begin{equation}
\label{eq:mudreg}
d x^{d-1} + (d-1) x^d - 1=0 \, ,
\end{equation}
otherwise $E$ is a Cantor set.
\end{enumerate}
\end{proposition}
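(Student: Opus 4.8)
The plan is to reduce the statement to the real one-dimensional dynamics of the single polynomial
\[
g(z)=\frac1p\,z^{d}-\frac{1-p}{p},
\]
and to determine for which $p$ the critical orbit $\big(g^{\comp n}(0)\big)_{n\ge 0}$ is bounded. First, since $p_{i}=p>0$ for all $i\ge 2$ we have $\liminf_{i}p_{i}>0$, so by~(\ref{if}) $E=h_{1}^{-1}(K)$ with $h_{1}$ a linear homeomorphism of $\C$; hence $E$ is connected (resp. a Cantor set) precisely when $K$ is. By Lemma~\ref{0inK}, $K$ is connected if $0\in K$ and a Cantor set otherwise, and since all the maps $g_{j}$ equal $g$ we have $\tilde g_{n}=g^{\comp n}$, so by~(\ref{E}) one has $0\in K$ if and only if $\big(g^{\comp n}(0)\big)_{n}$ is bounded. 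Thus I must decide when the critical orbit of $g$ is bounded. As a preliminary I would record the identity
\[
g(w)-w=\frac1p\,(w-1)\,\big(w^{d-1}+w^{d-2}+\cdots+w+(1-p)\big)=:\frac1p\,(w-1)\,R(w),\qquad w\in\R,
\]
which gives $g(1)=1$ and $g(w)>w$ for every $w>1$; hence any orbit entering $(1,\oo)$ increases and, there being no fixed point of $g$ above $1$ (indeed $R>0$ on $[0,\oo)$), escapes to $+\oo$.

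For $d$ even, $g$ is an even map, strictly decreasing on $[-1,0]$, strictly increasing on $[0,1]$, with minimum value $c:=g(0)=1-\tfrac1p$ at the critical point $0$, so that $g([-1,1])=[c,1]$; moreover for real $z$ with $|z|>1$ one has $g(z)=\big(|z|^{d}-(1-p)\big)/p>1$. If $p\ge\tfrac12$, then $c\ge -1$, the interval $[-1,1]$ is forward invariant, the critical orbit stays in it, $0\in K$, and $E$ is connected. If $p<\tfrac12$, then $c<-1$, whence $g^{\comp 2}(0)=g(c)>1$ and the critical orbit escapes, $0\notin K$, and $E$ is a Cantor set; this gives~(i).

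For $d$ odd, $g'(z)=\tfrac dp\,z^{d-1}\ge 0$, so $g$ is strictly increasing on $\R$, and since $g(0)=c<0$ the critical orbit is strictly decreasing after its first step; hence it is bounded if and only if it converges, i.e. if and only if $g$ has a real fixed point $\le c$. By the identity above the fixed points of $g$ are $1$ and the zeros of $R$; writing $R(w)=(1-p)+\frac{w(w^{d-1}-1)}{w-1}$ one checks $R>0$ on $(-\oo,-1]\cup[0,\oo)$, so all zeros of $R$ lie in $(-1,0)$. I would then set $S(z):=z+z^{2}+\cdots+z^{d-1}=\frac{z^{d}-z}{z-1}$, so $R=(1-p)+S$ and $S(-1)=S(0)=0$, and use $S'(z)=\frac{(d-1)z^{d}-dz^{d-1}+1}{(z-1)^{2}}$ together with the substitution $z=-t$ and $d$ odd to show that $S$ has a unique critical point $-\theta_{d}$ in $(-1,0)$, where $\theta_{d}\in(0,1)$ is the unique positive root of~(\ref{eq:mudreg}); thus $S$ is unimodal on $[-1,0]$ with minimum $m_{d}:=S(-\theta_{d})<0$. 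Consequently $R$ has a zero in $(-1,0)$ if and only if $1-p\le -m_{d}$, i.e. $p\ge 1+m_{d}$; when such a zero exists, the largest one, say $\beta_{1}$, satisfies $\beta_{1}=g(\beta_{1})<g(0)=c$, so the critical orbit remains $\ge\beta_{1}$ and converges to $\beta_{1}$ (bounded), while if none exists then $g(w)<w$ for all $w<1$ and the critical orbit tends to $-\oo$ (unbounded). Hence $0\in K$ if and only if $p\ge 1+m_{d}$, and it remains to check $1+m_{d}=\vartheta_{d}$: from $S(-\theta_{d})=(\theta_{d}^{d}-\theta_{d})/(\theta_{d}+1)$ one gets $1+m_{d}=(1+\theta_{d}^{d})/(\theta_{d}+1)$, which equals $d\theta_{d}^{d-1}$ exactly because $d\theta_{d}^{d-1}+(d-1)\theta_{d}^{d}-1=0$; this gives~(ii).

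The main obstacle is the odd case: one must identify the threshold with $1+\min_{[-1,0]}S$, which requires showing that $S$ has a single interior critical point on $[-1,0]$ (hence is unimodal there) and locating it, and then extracting the closed-form identity $1+\min_{[-1,0]}S=d\theta_{d}^{d-1}$ from the defining equation~(\ref{eq:mudreg}). The even case is comparatively soft, needing only the forward-invariant interval $[-1,1]$ together with the escape estimate $g(z)>1$ for $|z|>1$.
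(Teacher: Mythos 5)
Your proposal is correct and follows essentially the same route as the paper: reduce $E$ to $K$ via the affine conjugacy, invoke Lemma \ref{0inK}, and decide boundedness of the real critical orbit of $g$, with the even case settled by the invariant interval $[-1,1]$ (resp.\ escape when $g(0)<-1$) and the odd case by the existence of a real fixed point of $g$ in $(-1,0)$. The only real difference is computational: the paper maximizes $\psi(x)=p(g(x)-x)$ over $x\le 0$ at $x=-(p/d)^{1/(d-1)}$ and defines $\vartheta_d$ implicitly through $\phi(\vartheta_d)=0$, while you minimize the $p$-independent function $S(w)=w+\cdots+w^{d-1}$ on $[-1,0]$ at $-\theta_d$ and read off the threshold $p\ge 1+\min S=d\theta_d^{d-1}$ -- an equivalent determination of the same critical parameter.
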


\medskip

\begin{remark}
\textbf{1.} Since $\theta_d \in (0,1)$, we have that $d \, \theta_d^{d-1} > (d-1) \, \theta_d^d$ and by equation
(\ref{eq:mudreg}) we have that $\vartheta_d > 1/2$. This implies a noticeable difference with the case
$d$ even. \\
\textbf{2.} If $d = 3$ then $\theta_3 = 1/2$ and $\vartheta_3 = 3/4$. \\
\textbf{3.} We have that
$\vartheta_d$ decreases to $1/2$ as $d \rightarrow \infty$.
\end{remark}
%

\begin{proof}\emph{of Proposition \ref{prop:mudreg}.}
\medskip

(i) Assume that $d$ is even.
If $p < \frac{1}{2}$, then $g(0)= -\frac{(1-p)}{p} <-1$, hence $ 0 \not\in K$. By Lemma \ref{0inK} and Remark \ref{remark:RR}, $K$ and $E$ are Cantor sets. Now, suppose $p \geq  \frac{1}{2}$  and let $-1 \leq x \leq 1$, then we see easily by induction on $n$ that $ -1 \leq -\frac{(1-p)}{p} \leq g^{n}(x) \leq 1$ for all $n \in \mathbb{N}$. Hence $ 0  \in K$. By Lemma \ref{0inK} and Remark \ref{remark:RR}, $K$ and $E$ are connected.

\medskip

(ii) Now, assume that $d$ is odd.
\bigskip

\no \textbf{Claim :} $0 \in K$ if and only if the equation $x= g(x)$ has a real solution $ -1 \leq x \leq 0$.

\medskip

\no \textit{Proof of the Claim:} Assume that there exists a real number $x  \leq 0$  such that $x= g(x)$. Then, from monotonicity properties of $g$, for any integer $n \geq 0,$ we have  $x \leq g^{n}(0)  \leq 0$, therefore $0 \in K$.

Now suppose that $0 \in K$ and
put $l= \inf \{g^{n}(0): \; n \in \mathbb{N}\}$.
Let $\varepsilon  >0$ and $n \in \mathbb{N}$ such that $g^{n}(0)= \frac{1}{p} g^{n-1}(0)^{d} - \frac{1-p}{p} < l+ \varepsilon $,
then $g(l) = \frac {l^d - (1-p)}{p} < l+ \varepsilon $.
On the other hand $l \leq g^{n+1}(0) < \frac{1}{p} ( l+ \varepsilon )^d - \frac{1-p}{p}.$
Since $\epsilon$ is arbitrary, then $l= g(l)= \frac {l^d - (1-p)}{p}.$
From the fact that $-1 \leq l \leq 0$, the claim is proved.

\bigskip

Now consider the equation $\psi (x)= p (g(x)-x)= x^d- px- (1-p).$
Since $\psi (-1)= 2(p-1) <0$, $\psi (0)= p-1 <0$, 
\begin{displaymath}
\max_{x\le 0} \psi (x) \leq \psi \Big (- \big( \frac{\, p \,}{\, d \,} \big)^{\frac{1}{d-1}} \Big)
\end{displaymath}
and $\big( \frac{\, p \,}{\, d \,} \big)^{\frac{1}{d-1}} < 1$, we have that
$$ 0 \in K \Longleftrightarrow  \psi \Big( - \Big( \, \frac{p}{d} \, \Big)^{\frac{1}{d-1}} \Big) \geq 0.$$
Consider the function $\phi(p)=  \psi \big(- (\frac{p}{d} )^{\frac{1}{d-1}} \big)= (d-1) \big( (\frac{p}{d} )^{\frac{d}{d-1}} \big) - (1-p) $.
As  for any integer $d \geq 2$, we have $\phi(0) <0, \phi(1) >0$ and $\phi$ is  increasing, we deduce that there exists a unique real number
$0 < \vartheta_d <1$ such that 
$ \phi (\vartheta_d)= \psi \Big( - \big(\frac{\vartheta_d}{d} \big)^{\frac{1}{d-1}} \Big) = 0$.
Since $\partial_{p} \psi (x) = -x+1 \geq 0$ for all $x \leq 0$, we obtain that $0 \in K$ if and only if $p \geq \vartheta_d$.
On the other hand,
if $\theta_d=  \Big (\frac{\vartheta_d}{d} \Big)^{\frac{1}{d-1}}$, then
$(d-1)\theta_d^d + d \theta_{d}^{d-1}-1=0$
and  $\vartheta_d = d \theta_d^{d-1}$.
\end{proof}

\medskip

\subsection{Case where $(p_i)_{i \geq 2}$ is not constant}
Now we focus on the general setting, where again there are two different behaviors with respect to the parity of $d$. In Proposition \ref{propjulia} we consider the case $d$ even and in Proposition \ref{prop:odd} the case $d$ odd. These Propositions should be considered in analogy respectively with items (i) and (ii) in Proposition \ref{prop:mudreg}. The main difference with the case $(p_i)_{i \geq 2}$ constant is that in the general case $E$ may be non-connected and have a finite number of connected components. 
%

\begin{proposition}
\label{propjulia}
Assume that $d$ is even and let $s= \# \{i \geq 2,\; p_i < \frac{1}{2}\}$. Then we have the following results.
\begin{enumerate}
\item[(i)]
If $s=0$, then $E$ is connected.
\item[(ii)]
If  $0 < s <+ \infty $, then $E$ has $d^k$ connected components where $s \leq k \leq t-1$ where $p_t < \frac{1}{2}$ and $p_i \geq \frac{1}{2}$ for all $i > t$. In particular, if $p_i <1/2$ for all $2 \leq i \leq n$ and $p_i \geq 1/2$ for all $i >n+1$, then $E$ has exactly $d^{n-1}$ connected components.
\item[(iii)]
If $s= +\infty$, then $E$ has an infinite number of connected components.
\end{enumerate}
\end{proposition}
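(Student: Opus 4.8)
\emph{Overall strategy.} I would count the components of $E$ through the decreasing family of compacta $V_n:=\tilde f_n^{-1}\big(\overline{\mathbb{D}(0,1)}\big)$. By Corollary~\ref{cor:bound} one has $E=\bigcap_{n\ge1}V_n$, and since $f_{n+1}^{-1}\big(\overline{\mathbb{D}(0,1)}\big)=\overline{\mathbb{D}(1-p_{n+1},p_{n+1})}\subset\overline{\mathbb{D}(0,1)}$ the sequence is nested; each $V_n$ has finitely many components, and every component $C$ of $V_n$ meets $E$ because $\tilde f_n|_C:C\to\overline{\mathbb{D}(0,1)}$ is proper, hence onto, while the tail filled Julia set $E^{(n+1)}:=\{w:\limsup_j|(f_{n+j}\comp\cdots\comp f_{n+1})(w)|<\oo\}$ is a non-empty subset of $\overline{\mathbb{D}(0,1)}$ (it contains the fixed point $1$ of every $f_j$), so some $z\in C$ has $\tilde f_n(z)\in E^{(n+1)}$, hence $z\in E$. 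It follows that the number of components of $E$ is the eventual value --- possibly $+\oo$ --- of the non-decreasing sequence of numbers of components of the $V_n$, so it suffices to follow how this number evolves as the maps $f_n,f_{n-1},\dots,f_1$ are successively pulled back onto $\overline{\mathbb{D}(0,1)}$.

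\emph{The pull-back step.} Each $f_m$ is a polynomial of degree $d$ with a single critical point $1-p_m$ of multiplicity $d-1$ and common critical value $0$. By the Riemann--Hurwitz formula (used exactly as in Remark~\ref{remark:RR} and Lemma~\ref{0inK}), pulling a finite union of Jordan domains back under $f_m$ multiplies the number of components by $d$, except that if $0$ lies in the interior of one of them this component contributes only one preimage component. Hence the component count of $V_n$ is multiplied by $d$ at the step corresponding to $f_m$ unless the critical value $0$ is captured by $W_{m+1}:=(f_{m+1}\comp\cdots\comp f_n)^{-1}\big(\overline{\mathbb{D}(0,1)}\big)$; and $0\in W_{m+1}$ iff the orbit $0\mapsto f_{m+1}(0)\mapsto f_{m+2}(f_{m+1}(0))\mapsto\cdots$ of the shifted sequence stays in $\overline{\mathbb{D}(0,1)}$. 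Since $d$ is even, $f_{m+1}(0)=\big((1-p_{m+1})/p_{m+1}\big)^d$ has modulus $>1$ precisely when $p_{m+1}<1/2$, and then the estimates of Lemma~\ref{bound} force escape --- so the step necessarily multiplies the count by $d$; conversely, if $p_i\ge1/2$ for all $i\ge m+1$ then $[0,1]$ is forward invariant under every $f_i$, the orbit of $0$ never leaves $\overline{\mathbb{D}(0,1)}$, and --- as in the case $p\ge1/2$ of Proposition~\ref{prop:mudreg}(i) and in Lemma~\ref{0inK} --- no splitting occurs and the corresponding tail Julia set is connected.

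\emph{The three cases.} Part~(i) is $s=0$: all $p_i\ge1/2$, so $0$ is captured at every step, no splitting ever occurs, and $E$ is connected. For part~(ii): the $s$ indices $i\in\{2,\dots,t\}$ with $p_i<1/2$ force $s$ of the steps (those pulling back under $f_{i-1}$) to multiply the count by $d$; since $p_i\ge1/2$ for $i>t$ the tail subsystem from index $t+1$ on is connected by part~(i), so no splitting comes from the steps pulling back under $f_m$ with $m\ge t$, and $E$ has $d^k$ components for some $s\le k\le t-1$. In the special case where the indices $i\ge2$ with $p_i<1/2$ are exactly $2,\dots,n$ (so $s=n-1$, $t=n$), all of the steps pulling back under $f_1,\dots,f_{n-1}$ are forced splits and no other step splits, and at each of them $0\notin W_{m+1}$ (as $p_{m+1}<1/2$), so the delicate interior-capture situation never arises and $E$ has exactly $d^{n-1}$ components. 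For part~(iii), $s=\oo$ forces infinitely many steps to multiply the count by $d$, so the number of components of $V_n$ is unbounded; since every component of $V_n$ meets $E$, the set $E$ has infinitely many components.

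\emph{Main obstacle.} The hard point is the precise count in~(ii): that the number of components is exactly a power $d^k$ rather than merely lying between $d^s$ and $d^{t-1}$. This requires analysing, at each pull-back step, not just whether the critical value $0$ lies in $W_{m+1}$ but where it lies --- in the interior of one of its components (which would produce $1+d(c-1)$ components rather than $d\,c$, possibly breaking the power-of-$d$ structure), on its boundary, or outside it --- and proving that whenever $W_{m+1}$ already has at least two components the orbit of $0$ can only land on the boundary of a component, never in its interior, so that every step is uniformly a full $\times d$ split or leaves the count unchanged. Handling this, together with the borderline parameters $p_i=1/2$, the local (interior/boundary/sector) structure of the fibered Julia sets at $0$, and the exact value of $k$ within $[s,t-1]$, is the technical core; I would carry it out by an induction on the block structure of $(p_i)_{i\ge2}$ relative to the threshold $1/2$, the clean answer $d^{n-1}$ being the case where all the bookkeeping aligns.
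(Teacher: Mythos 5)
Your overall strategy --- counting components of the nested preimages $V_n=\tilde f_n^{-1}\big(\overline{\mathbb{D}(0,1)}\big)$, tracking at each pull-back under the unicritical map $f_m$ whether its critical value $0$ is captured, and using the parity of $d$ to tie escape to $p_{m+1}<1/2$ and capture to $p_i\ge 1/2$ from some index on --- is essentially the paper's argument (the paper phrases it with the conjugated maps $g_j$ and applies the Riemann--Hurwitz formula to the escaping critical points of $\tilde g_n$, but the bookkeeping is the same). Your treatment of (i), of (iii), of the ``in particular'' block case, and of the bounds $d^s\le \#\mathrm{comp}(E)\le d^{t-1}$, together with your extra care that every component of $V_n$ meets $E$ (which the paper leaves implicit), is sound.

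The genuine gap is the one you flag yourself: the exact claim of (ii), that the number of components is a power $d^k$, is not proved, and your proposed route to close it cannot work. What is needed is that a capture (critical value of $f_m$ lying in $W_{m+1}$) never occurs after a split has already happened, i.e.\ that the split steps form an initial segment $\{1,\dots,k\}$; your substitute claim --- that once $W_{m+1}$ has at least two components the critical value can only land on a boundary --- is both insufficient (capturing a full component on its boundary still gives a connected preimage, hence $d(c-1)+1$ components) and false. Indeed take $d=2$, $p_2=0.6$, $p_3=0.3$, $p_j=3/4$ for $j\ge 4$: then $s=1$, $t=3$, the tail set $E^{(2)}$ has exactly two components, and the critical value $0$ of $f_1$ lies in the \emph{interior} of one of them (since $f_2(0)=4/9$ and $f_3(4/9)=529/729$ lies in the interior of the filled Julia set of the constant tail $p=3/4$), so $E=f_1^{-1}(E^{(2)})$ has exactly $3$ connected components --- not a power of $2$, though it does lie between $d^s=2$ and $d^{t-1}=4$. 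So the exact power-of-$d$ form of (ii) fails in general; for what it is worth, the paper's own proof also only establishes the two bounds (at least $d^s$, at most $d^{t}$ components) via Riemann--Hurwitz and never addresses the exact form, and the precise count $d^{n-1}$ is only available in the block case you did handle, where every capture occurs while the count is still one.
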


 \begin{proof}

 \medskip


(i) Let $n \geq 2$ and $z$ be a critical point of $\tilde{g}_{n}= g_{n+1} \circ \ldots \circ g_2$, then $z=0$ or  $\tilde{g}_{i}(z) = 0$ for some $2 \leq
 i \leq n$.

Suppose $s=0$ then $p_i \geq \frac{1}{2}$ for all integers $i \geq 2$. It is easy to see that all integers $2 \leq k \leq m,\;
 -1  \leq 1-\frac{1}{p_{m+1}} \leq  \tilde{g}_{m}(0) \leq 1$.
We deduce that all critical points of $\tilde{g}_n$ belong to $ K$. Hence $K$ is a connected set. Thus by (\ref{if}), we deduce that $E$ is connected,

\medskip

(ii) Assume that $0 < s <+ \infty $, then there exist $s$ integers $k_1 < k_2 < \ldots < k_s$  such that $p_{k_{j}} < \frac{1}{2}$ for all $ 1 \leq j \leq s$. Hence
$1- \frac{1}{p_{k_{j}}} < -1$ for all $1 \leq j \leq s$.

Let $ n >k_s,\; 1 \leq j \leq s$,  and  $z_j $ be a complex number such that $\tilde{g}_{k_j -2}(z_j) = 0$, then
$\tilde{g}_{n}(z_j)= g_{n+1} \circ \ldots \circ g_{k_{j}+1}( 1- \frac{1}{p_{k_{j}}})$  diverges to $-\infty$ as $n \ra \infty$. Hence $z_j \not \in K$.
Since  $z_j$ is a critical point of $\tilde{g}_{n}$, using the Riemann-Hurwitz formula, we deduce that $K$ has at least $d^s$ connected components. On the other hand,
as $p_i \geq \frac{1}{2}$ for all $i > k_s$, we deduce that any complex number $z $ satisfying  $\tilde{g}_m (z)=0,\; m >  k_s$, belongs to $K$. Therefore $E$ has at most $d^{k_s}$ connected components.
\medskip

(iii) Suppose that $s= +\infty$. If $\liminf_{i \ra \infty} p_i >0$, then by (\ref{if}) and item (ii), we deduce that $E$ has an infinite number of connected components.

Now, suppose that $\liminf_{i \ra \infty} p_i =0$. Since if $R >1$ is a real number, then  $E= \bigcap_{n=1}^{+\infty} \tilde{f}_n ^{-1} \mathbb{D}(0,R)$ where $\tilde{f}_{n+1} ^{-1} \mathbb{D}(0,R) \subset \tilde{f}_{n} ^{-1} \mathbb{D}(0,R)$ for every integer $n \geq 1$; and the fact that the critical points of $\tilde{f}_n $ are $z$ such that $z=1-p_1$ or $\tilde{f}_k(z)= 1-p_{k+1}$ for some integer $k <n$. Using the same idea used in the proof of item (ii), we deduce that $E$ has an infinite number of connected components.

\end{proof}
\medskip

\begin{proposition}
\label{prop:odd}
If $d$ is odd, then the following assertions hold:
\begin{enumerate}
\item[(i)] $E$ is connected if and only if $1-p_1 \in E$.
\item[(ii)] If $p_j \ge \vartheta_d$ for all $j \ge 2$, then $E$ is connected.
\item[(iii)] For every $0 < \delta < \vartheta_d$, there exists $k=k(\delta)$ such that if $p_{m+r} < \vartheta_d - \delta$, for some $m\ge 2$ and all $0\le r \le k$, then $E$ is not connected.
\item[(iv)] If $p_{j} < 1/2$, for some $j$, then $E$ has at least $d^j$ connected components.
\item[(v)] If $p_j < 1/2$ for infinitely many $j$'s, then $E$ has an infinite number of connected components.
\item[(vi)] If $p_j$ is randomly chosen in a way that $(p_i)_{i=1}^\infty$ is a sequence of iid
random variables with $P(p_i < \vartheta_d) > 0$, then
$$P( E_{\bar{p}} \textrm{ has an infinite number of connected components}) = 1.$$
\end{enumerate}
\end{proposition}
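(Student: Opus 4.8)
Every part rests on the Riemann--Hurwitz bookkeeping of Remark \ref{remark:RR}, which relates the number of connected components of $E$ to the critical points of the compositions $\tilde f_n$ that escape to infinity, together with one fact special to $d$ odd: each $f_j$ is a monotone increasing self-map of $\R$ fixing $1$, with $f_j(0)\le 0$ and $f_j((-\oo,1])\subseteq(-\oo,1]$, so the forward orbit of a real critical point stays on the real line and can be compared with the orbits of the constant maps of Proposition \ref{prop:mudreg}. I would work with the nested closed neighbourhoods $V_n:=\tilde f_n^{-1}\big(\overline{\mathbb{D}(0,R)}\big)$, $R>1$ fixed, for which $\bigcap_n V_n=E$ (Lemma \ref{bound}, Corollary \ref{cor:bound}) and --- using the self-similarity $f_1(E_{\bar p})=E_{\bar p_2}$ iterated, so that $E=\tilde f_n^{-1}(E_{\bar p_{n+1}})$, together with $E_{\bar p_{n+1}}\subseteq\overline{\mathbb{D}(0,1)}$ --- every component of $V_n$ meets $E$. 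The crucial lemma for (i) is that, for $d$ odd, $1-p_1\in E$ iff every critical point of every $\tilde f_n$ lies in $E$: the forward orbit of $1-p_1$ is $f_1(1-p_1)=0$ followed by the orbit of $0$ under $f_2,f_3,\dots$, and a critical point $z$ with $\tilde f_k(z)=1-p_{k+1}$ has orbit $\tilde f_{k+1}(z)=0$ followed by the orbit of $0$ under $f_{k+2},f_{k+3},\dots$; by monotonicity the latter orbit dominates a tail of the former while staying $\le 0$ after one step, so boundedness of the orbit of $0$ under $(f_j)_{j\ge 2}$ (i.e.\ $1-p_1\in E$) forces boundedness of all critical orbits, and the converse is trivial. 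Then Riemann--Hurwitz, exactly as in Lemma \ref{0inK}: if all $d^n-1$ critical points of $\tilde f_n$ lie in $V_n$ then $V_n$ is connected, so the nested intersection $E=\bigcap_n V_n$ is connected; if $1-p_1\notin E$ then $1-p_1\notin V_n$ for $n$ large, so $V_n$ has $\ge 2$ components, each meeting $E$, and $E$ is disconnected --- establishing (i) and, more generally, that a single escaping critical point disconnects $E$.

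\noindent\textbf{Proof of (ii) and (iii).} By (i) it suffices to decide whether the orbit of $0$ under $(f_j)_{j\ge 2}$ is bounded; after the conjugacy (\ref{conjugacy}) this is the orbit of $0$ under $(g_j)_{j\ge 2}$, whose constant-parameter version is analysed in Proposition \ref{prop:mudreg}. For (ii): the constant map $z\mapsto\frac1p z^d-\frac{1-p}p$ depends monotonically on $p$ on $(-\oo,0]$, and its value of parameter $\vartheta_d$ fixes $-\theta_d$ by (\ref{eq:mudreg}); hence $p_j\ge\vartheta_d$ gives $g_j(-\theta_d)\ge -\theta_d$, so $[-\theta_d,1]$ is invariant under every such $g_j$ and the orbit of $0\in[-\theta_d,1]$ is bounded, whence $1-p_1\in E$ and (i) gives connectedness. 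For (iii): choose a real $z$ with $\tilde f_{m-1}(z)=1-p_m$ (possible since $\tilde f_{m-1}:\R\to\R$ is onto for $d$ odd); in the coordinates (\ref{conjugacy}) the forward orbit of $z$ is a tail of the orbit of $0$ under $(g_j)_{j\ge m+1}$, and if $p_{m+r}<\vartheta_d-\delta$ for $0\le r\le k$ then, by that parameter monotonicity on $(-\oo,0]$, the first $k$ steps of this orbit dominate the orbit of $0$ under the constant map of parameter $\vartheta_d-\delta$, which escapes by Proposition \ref{prop:mudreg}; a uniform lower bound for $x-g(x)$ over all constant parameters $p\le\vartheta_d-\delta$ and $x\in[-1,0]$ makes this comparison orbit drop below $-1$ within a number $k(\delta)$ of steps depending only on $\delta$, after which one checks $g_j(x)<x<-1$ for $x<-1$ and divergence follows. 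Hence $z$ escapes and $E$ is not connected.

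\noindent\textbf{Proof of (iv) and (v).} Iterate the self-similarity $E_{\bar p_{i-1}}=f_{i-1}^{-1}(E_{\bar p_i})$, whose only critical value is $0$. One has $0\in\overline{\mathbb{D}(1-p_j,p_j)}$ exactly when $p_j\ge 1/2$, so Lemma \ref{bound} gives $0\notin E_{\bar p_j}$ as soon as $p_j<1/2$. For $d$ odd, $0\notin E_{\bar q}$ forces $E_{\bar q}\cap\R\subseteq(0,1]$: otherwise some $x_0<0$ lies in $E_{\bar q}$, and the orbit of $0$ --- dominating that of $x_0$ from above and staying $\le 0$ after one step --- is bounded, so $0\in E_{\bar q}$. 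Since $f_{i-1}(0)=(1-1/p_{i-1})^d\le 0$, the relation $0\notin E_{\bar p_i}$ thus propagates down to all $i\le j$; for each such $i$ the critical value $0$ of $f_{i-1}$ lies outside $E_{\bar p_i}$, so $E_{\bar p_{i-1}}\to E_{\bar p_i}$ is an unramified degree-$d$ cover, and because every component of $E_{\bar p_i}$ is full (its complement being connected, by the argument of the first Proposition of this section) each component has exactly $d$ preimage components. Each such pull-back therefore multiplies the component count by $d$, forcing $E=E_{\bar p_1}$ to have at least $d^j$ components. Part (v) follows by letting $j$ run over the infinitely many indices with $p_j<1/2$.

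\noindent\textbf{Proof of (vi), and the main difficulty.} Fix $\delta>0$ with $\alpha:=P(p_1<\vartheta_d-\delta)>0$ --- possible since $\{p_1<\vartheta_d-\delta\}\uparrow\{p_1<\vartheta_d\}$ as $\delta\downarrow 0$ --- and let $k=k(\delta)$ be as in (iii). Partitioning $\{2,3,\dots\}$ into consecutive disjoint blocks of length $k+1$, the events ``all parameters in the block are $<\vartheta_d-\delta$'' are independent of probability $\alpha^{k+1}>0$, so by Borel--Cantelli infinitely many occur almost surely; each yields, as in (iii), an escaping critical point in the fiber of $\tilde f_{m-1}$ over $1-p_m$ for the corresponding $m$, and summing local degrees of $\tilde f_n$ over such a fiber shows the escaping critical multiplicity of $\tilde f_n$ is unbounded, so almost surely $E$ has infinitely many components. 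The step requiring the most care throughout is the transfer of the Riemann--Hurwitz counts from the closed neighbourhoods $V_n$ (and from the pull-back tower above) to the limit set $E$: checking that every component of $V_n$ actually meets $E$, that nestedness yields connectedness of $\bigcap_n V_n$ when each $V_n$ is connected, and that local degrees of $\tilde f_n$ are summed correctly when several critical fibers contribute.
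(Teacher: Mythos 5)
Your overall strategy is the paper's: for odd $d$ each $f_j$ is increasing on $\R$ and maps $(-\infty,0]$ into itself, every critical orbit passes through $0$ and is then squeezed between the orbit of $1-p_1$ and $0$, and Riemann--Hurwitz converts escaping critical points into components. Parts (i) and (v) reproduce the paper's argument (your extra bookkeeping that every component of $V_n=\tilde{f}_n^{-1}\big(\overline{\mathbb{D}(0,R)}\big)$ meets $E$, and that nested connected compacta have connected intersection, is correct and is left implicit in the paper). In (ii) you replace the paper's comparison with the full orbit $g^{j}(0,\vartheta_d)$ by the invariant interval $[-\theta_d,1]$, using that $-\theta_d$ is the fixed point of $g$ at parameter $\vartheta_d$ given by (\ref{eq:mudreg}); this is equivalent and correct. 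In (iii) you make a critical point over $1-p_m$ escape and extract $k(\delta)$ from a uniform decrement (indeed $\psi_p(x)\le \phi(\vartheta_d-\delta)<0$ for $x\le 0$, $p\le\vartheta_d-\delta$), while the paper pushes $1-p_1$ itself below $-1$ and takes $k$ from divergence of $g^{j}(0,\vartheta_d-\delta)$; both work, but note the comparison goes the opposite way from your word ``dominate'' (the true orbit lies \emph{below} the constant-parameter orbit), and the escape is best read off in the $f$-coordinates via $\tilde{f}_{n+1}(z)=\big(\tilde{g}_n(h_1(z))\big)^d$ so that (\ref{julia1}) applies. Your (vi), with disjoint blocks of length $k+1$, is in fact more careful than the paper's Borel--Cantelli step, where the independence needed for the second Borel--Cantelli lemma is not made explicit; the local-degree count over a single critical fiber per block does give an unbounded number of escaping critical points, hence infinitely many components.

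The genuine gap is in (iv), and it is an off-by-one that the paper shares. Your tower of unramified covers consists of the maps $f_{i-1}:E_{\bar{p}_{i-1}}\to E_{\bar{p}_i}$ for $2\le i\le j$ only --- $j-1$ steps --- and $E_{\bar{p}_j}$ is only known to be nonempty, so the argument as written yields $d^{\,j-1}$ components, not the asserted $d^{\,j}$. The missing factor $d$ would require $0\notin E_{\bar{p}_{j+1}}$, equivalently that the critical points with $\tilde{f}_{j-1}(z)=1-p_j$ escape; but those orbits reach $0$ at time $j$ and their fate depends on $p_{j+1},p_{j+2},\dots$, not on $p_j<1/2$ (for instance, if $p_i\ge\vartheta_d$ for every $i\neq j$, these level-$j$ critical points stay bounded, so no argument can extract the extra factor from $p_j<1/2$ alone). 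The paper's proof makes the same unjustified claim, asserting $\tilde{f}_{j+1}(z)<-1$ for \emph{all} critical points of $\tilde{f}_j$, which fails at level $j$; so with either method the honest conclusion is $d^{\,j-1}$, which is all that (v) and (vi) require. Apart from the exponent, your covering mechanism is sound and is a nice alternative to the paper's direct Riemann--Hurwitz count: fullness of each component of $E_{\bar{p}_i}$ does follow from connectedness of $\C\setminus E_{\bar{p}_i}$ (the first Proposition of Section \ref{sec:connectedness}), and a full component avoiding the critical value $0$ pulls back under $f_{i-1}$ to exactly $d$ components.
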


\medskip

For the proof of the previous Proposition and the next examples, we cease to mention $K$ and we speak directly of $E$. This avoids some unnecessary assertions.

\medskip

\noindent{\bf Examples :}
We show here with examples in the case where $d$ is odd that little can be said about the connectedness of $E$ when
we have indices $j$ for which $1/2 \le p_j < \vartheta_d$. Recall that in the case $d=3$, we have $\theta_3 = \frac{1}{2}$
and $\vartheta_3 = \frac{3}{4}$.
\begin{enumerate}
\item[1.] Take $p_2 = 2/3$ and $p_j=3/4$ otherwise. In this case, $\tilde{g}_2 (0) = - 1/2 = - \theta_3$ thus $\tilde{g}_j (0) = - 1/2$ for every $j$. Since the sequence $(\tilde{g}_j (0))_{j\ge 2}$ is bounded, $ E$ is connected.
\item[2.] Take $p_2 = 2/3$, $p_3=9/14$ and $p_j=3/4$ otherwise. In this case, $\tilde{g}_2 (0) = - 1/2$ and $\tilde{g}_3 (0) = - \frac{3}{4} < -1/2$,
thus $\lim_{j \rightarrow \infty} \tilde{g}_j (0) = - \infty$ and $E$ is not connected. Moreover, any other critical point of
$\tilde{g}_j$ distinct from $0$ is in $E$ by the previous case. Then $E$ has $d$ connected components by the
Riemann-Hurwitz Lemma (see Figure \ref{non-connected} above).
\item[3.] Take $p_2 = 2/3$, $p_3=9/14$, $p_4=\frac{126}{128}$ and $p_j=3/4$ otherwise. In this case, $\tilde{g}_2 (0) = 1/2$,
$\tilde{g}_3 (0) = - \frac{3}{4} < -1/2$ and $\tilde{g}_4 (0) = - \frac{6}{14} - \frac{1}{63} > -1/2$. Thus $-1/2 < \tilde{g}_j (0) < 0$
for every $j\ge 4$ which implies that $(\tilde{g}_j (0))_{j\ge 2}$ is bounded and $E$ is connected.
\end{enumerate}

\medskip

\begin{figure}[htb]
\begin{center}
\includegraphics[width=10cm]{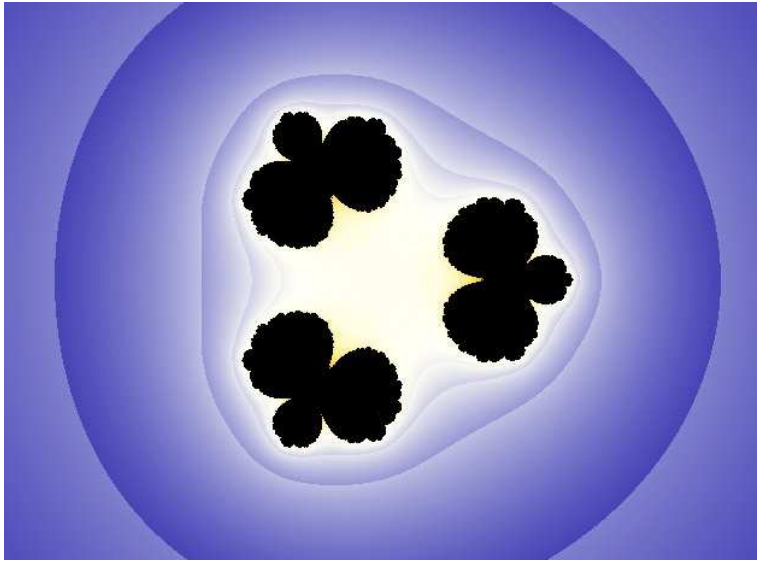}
\caption{In degree 3, $E$ may have a finite number of connected components }
\label{non-connected}
\end{center}
\end{figure}

\begin{proof}\emph{of Proposition \ref{prop:odd}.}

\medskip

\no \textit{Proof of (i):}
By the Riemann-Hurwitz Formula, $E$ is connected, if and only if, $E$ contains all the critical points of
$\tilde{f}_j$ for every $j \ge 1$. Since $1-p_1$ is clearly a critical point of every $\tilde{f}_j$, if $E$ is connected
then $1-p_1 \in E$. Now suppose that $1-p_1 \in E$ and let $z$ be a critical point of $\tilde{f}_l$, for some fixed $l \ge 2$.
By the chain rule for derivatives, there exists $1 \le m < l$ such that $\tilde{f}_m(z) = 1-p_{m+1}$. Thus, for every $j \geq l$,
$$
0 \ge \tilde{f}_j (z) = (f_{j} \comp ... \comp f_{m+2}) (0) \ge
(f_{j} \comp ... \comp f_{m+2}) ( \tilde{f}_{m+1}(1-p_1) ) = \tilde{f}_j (1-p_1)  \, ,
$$
which follows from the fact that  $(f_j \comp ... \comp f_{m})$ is increasing and   $\tilde{f}_{m+1}(1-p_1)= (f_{m+1} \comp ... \comp f_{2}) (0) < 0$.
Since $(\tilde{f}_j (1-p_1))_{j\ge 1}$ is bounded, $(\tilde{f}_j (z))_{j\ge 1}$ is also bounded and $z \in E$.
Therefore $E$ is connected.

\medskip

\no \textit{Proof of (ii):} Since $d$ is odd, $0 \in K$ (equivalent to $1-p_1 \in E$) if and only if
$\lim_{j \ra \infty} \tilde{g}_j (0) = - \infty$. Let
$$
g(x,p) := \frac{x^d - (1-p)}{p},
$$
then $\partial_p g(x,p) \ge 0$, for $x < 0$. Thus
$$
\tilde{g}_j(0) \ge g^{j} (0,\vartheta_d) \, ,
$$
for every $j$. By Proposition \ref{prop:mudreg}, we have  $-1 < \lim_{n \ra \infty} g^j (0,\vartheta_d) < 0$, thus (ii) holds.

\medskip

\no \textit{Proof of (iii):}
By Proposition \ref{prop:mudreg}, we have  $\lim_{j \ra \infty} g^j (0,\vartheta_d - \delta) = -\infty$. Therefore
there exists  $k=k(\delta)>1$ such that
$$
g^{k-1} (0,\vartheta_d - \delta) < -1 \, .
$$
Put
$$
f(x,p) := \left(\frac{x - (1-p)}{p} \right )^d.
$$
By (4.2),  $ h_{\vartheta_d - \delta}  \comp
f^{k-1} (1-p_1,\vartheta_d - \delta) = g^{k-1} (0,\vartheta_d - \delta)$.
Hence $f^{k} (1-p_1,\vartheta_d - \delta) =  \left( h_{\vartheta_d - \delta} \comp
f^{k-1} (1-p_1,\vartheta_d - \delta) \right)^d <-1$.

Now suppose that $p_{m+r} < \vartheta_d - \delta$, for some $m\ge 2$ and all $0\le r \le k$.
Since, $f$ is increasing in both $x$ and $p$, for $x\le 0$, then
\begin{eqnarray*}
\tilde{f}_{m+r} (1-p_1) & = & (f_{m+r} \comp ... \comp f_{m+1})( \tilde{f}_{m} (1-p_1) ) \\
& \le & (f_{m+r} \comp ... \comp f_{m+1})( 0 ) \le f^k (0,\vartheta_d - \delta) \le -1 \, ,
\end{eqnarray*}
for every $j \ge m+r$. Thus $1-p_1 \notin E$ and hence  $E$ is not connected.

\medskip

\no \textit{Proof of (iv):} If $p_j < 1/2$ then $f_j(0) < -1$. For every critical point $z \in \mathbb{C}$
of $\tilde{g}_j$, we have that $\tilde{f}_m(z) = 1-p_{m+1}$ for some $0\le m < j$, then $\tilde{f}_{j+1}(z) < -1$. Thus $z \notin E$.
By the Riemann-Hurwitz Formula, $E$ has at least $d^j$ connected components.

\medskip

\no \textit{Proof of (v):} Take $j \rightarrow \infty$ in (iv).

\medskip

\no \textit{Proof of (vi):} By continuity of measures, there exists $\delta$ such that
$P(p_i < \vartheta_d - \delta) > 0$. Take $k=k(\delta)$ as in (iii).
By the Borel Cantelli Lemma, we have that almost surely
there exists a sequence $(j_n)_{n\ge 1}$ such that $\lim_{n\rightarrow \infty} j_n = \infty$
and $p_{j_n+r} < \vartheta_d - \delta$ for all $0\le r \le k$ and $n \ge 1$. Since, from
(iii), we have
$$
(f_{j_n+r} \comp ... \comp f_{j_n})( 0 ) < -1 \, ,
$$
following the arguments in (iv), we get that $E_{\bar{\rho}}$ has at least
$d^{j_n+r}$ connected components. Since $\lim_{n\rightarrow \infty} j_n = \infty$,
we have that $E_{\bar{\rho}}$ has an infinite number of connected components.
\end{proof}

\medskip

\section{Further topological and analytical properties of the spectra}
\label{sec:fibered}

\smallskip

Fix $d\geq 2$. We are now able to identify the sets $E$ and $K$ with fibered Julia sets associated to a suitable
fibered polynomial  (see \cite{ses}).
Indeed, let us denote $X=[0,1]^\N$, which is a compact set endowed with the product topology. \\
We define the shift map $\tau : X\rightarrow X$ in the standard
way: for all $p=(p_1,\ldots,p_n,\ldots) \in X$, $ \tau(p)=( p_2,\ldots, p_{n+1}\ldots)$.
Let us define the "parameters" of our fibered polynomial map for all $p=(p_n)_{n \geq 1}\in X$ by
$$
a(p)=\frac{1}{p_1} \mbox{ and } b(p)=-\frac{1-p_1}{p_1}.
$$
We introduce the fibered polynomial map of degree $d$ over $(X,\tau)$ defined by
$$
\begin{array}{cccc}
 P\ : & X \times \mathbb{C} & \longrightarrow &
X\times \mathbb{C} \\
{} & (p,z) & \longmapsto & \left(\tau(p),
P_p(z)=a(p)z^d+b(p)\right),
\end{array}
$$
Note that $P$ is a skew-product map ({\it i.e.} the first variable does not depend on the second one).
If $P^n$ is the $n$-iterate of $P$ then $P^n(p,z)=(\tau^n(p), P^n_p(z))$ where
\begin{equation}\label{Pn}
P^n_p(z)= P_{\tau^{n-1}(p)}\circ\ldots \circ P_{\tau(p)}\circ P_p(z) \, .
\end{equation}
Observe that $P^{n+1}_{\tau(p)}$ is exactly the map $\tilde{g}_n$ defined in the previous section.

Now, we need to make the assumption that there exists $\varepsilon >0$ such that  for all $n \geq 2$, $\varepsilon\leq p_n\leq 1$. Thus
we restrict $\tau$ to the compact set $\widetilde{X}=[\varepsilon,1]^\N \subset X$. With this restriction,
$a$ and $b$ turn out to be continuous functions on
$\widetilde{X}$. We could also consider the $\tau$-invariant compact set  $Y=\overline{\{\tau^n(\omega),\ n\in\N\}}$ for a given $\omega=(p_1,\ldots,p_n,\ldots)$, 
with $p_i\geq\varepsilon$ and consider the restriction $\tau :Y\rightarrow Y$.\\
Whenever $f:X\rightarrow \C$ is a continuous map we consider the uniform norm $\|f\|_{\infty}=\sup_{x\in X}|f(x)|$.\\
The first point is to conjugate $P$ to a monic fibered polynomial by a fibered homeomorphism
of $X\times\mathbb{C}$ of the form: $(p,z)\mapsto (p, \lambda(p) z).$ This is the object of the following Proposition

\begin{proposition}\label{conjugacy}
Assume that $p_i\in [\varepsilon,1]$, $\varepsilon>0$, then there exist a continuous function
$\lambda : X\rightarrow \R^+$
such that $P$ is conjugated by $(p,z)\mapsto (p, \lambda(p) z)$
to $\hat P(p,z)=(\tau(p),z^d+c(p))$ where $c(p)=\lambda(\tau(p))b(p)$.
\end{proposition}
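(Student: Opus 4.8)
The plan is to explicitly build the conjugating function $\lambda$ via the fibered analogue of the classical normalization that turns $az^d+b$ into a monic polynomial. Recall that a single affine conjugacy $z\mapsto \mu z$ sends $P_p(z)=a(p)z^d+b(p)$ to $\mu^{-1}(a(p)\mu^d z^d + b(p)) = a(p)\mu^{d-1} z^d + \mu^{-1}b(p)$, which is monic precisely when $\mu^{d-1}=a(p)^{-1}$. In the fibered setting the coefficient seen after the conjugacy must match the fiber over $\tau(p)$, so the correct requirement is that, writing the conjugacy as $\Phi(p,z)=(p,\lambda(p)z)$, the conjugated map $\hat P = \Phi\circ P\circ\Phi^{-1}$ has the form $(\tau(p), z^d + c(p))$. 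A direct computation gives
\[
\hat P(p,z) = \Bigl(\tau(p),\ \lambda(\tau(p))\,\bigl(a(p)\,\lambda(p)^{-d}z^d + b(p)\bigr)\Bigr)
= \Bigl(\tau(p),\ a(p)\,\lambda(\tau(p))\,\lambda(p)^{-d}\,z^d + \lambda(\tau(p))\,b(p)\Bigr).
\]
So I need $\lambda:\widetilde X\to\R^+$ solving the functional equation $a(p)\,\lambda(\tau(p)) = \lambda(p)^d$ for all $p$, and then $c(p)=\lambda(\tau(p))b(p)$ automatically. (I will state everything on $\widetilde X=[\varepsilon,1]^{\N}$, where $a(p)=1/p_1\in[1,1/\varepsilon]$ is continuous.)

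First I would take logarithms to linearize: with $u=\log\lambda$ the equation becomes $d\,u(p) - u(\tau(p)) = \log a(p) =: \alpha(p)$, i.e. $u = \tfrac1d\bigl(\alpha + u\circ\tau\bigr)$. Since $\tfrac1d<1$ and $\alpha$ is bounded and continuous on the compact set $\widetilde X$, the operator $T\colon v\mapsto \tfrac1d(\alpha + v\circ\tau)$ is a contraction on $C(\widetilde X)$ with the uniform norm (indeed $\|Tv_1-Tv_2\|_\infty = \tfrac1d\|v_1\circ\tau - v_2\circ\tau\|_\infty = \tfrac1d\|v_1-v_2\|_\infty$ because $\tau$ is surjective on $\widetilde X$), so by the Banach fixed point theorem there is a unique continuous $u$ with $Tu=u$; equivalently the series $u(p)=\sum_{n\ge 0} d^{-(n+1)}\alpha(\tau^n(p))$ converges uniformly and is continuous. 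Setting $\lambda=e^{u}$ gives a continuous $\lambda:\widetilde X\to\R^+$ satisfying $\lambda(p)^d = a(p)\lambda(\tau(p))$, which is exactly what the conjugacy requires. Finally I would record that $\Phi(p,z)=(p,\lambda(p)z)$ is a fibered homeomorphism of $\widetilde X\times\C$ (fiberwise it is the invertible linear map $z\mapsto\lambda(p)z$ with $\lambda(p)>0$, depending continuously on $p$, with inverse $(p,w)\mapsto(p,\lambda(p)^{-1}w)$ equally continuous), verify the displayed identity $\Phi\circ P = \hat P\circ\Phi$ using the functional equation, and read off $c(p)=\lambda(\tau(p))b(p)$, which is continuous since $\lambda$, $\tau$ and $b$ all are.

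The only real subtlety — the part I would be most careful about — is the contraction estimate: it relies on $\tau$ mapping $\widetilde X$ onto $\widetilde X$ so that precomposition with $\tau$ is an isometry of $C(\widetilde X)$, and on $\alpha=\log a$ being genuinely bounded, which is where the standing hypothesis $p_n\ge\varepsilon>0$ is essential (without it $a(p)=1/p_1$ is unbounded and no continuous $\lambda$ need exist). If one prefers to work over the orbit closure $Y=\overline{\{\tau^n(\omega):n\in\N\}}$ instead of all of $\widetilde X$, the same argument applies verbatim since $\tau(Y)=Y$ and $\alpha|_Y$ is still bounded and continuous. Everything else is the routine bookkeeping of composing affine maps, so I would not belabor it.
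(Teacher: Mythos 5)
Your proposal is correct and follows essentially the same route as the paper: you derive the same functional equation $\lambda(p)^d=a(p)\lambda(\tau(p))$ from matching the second coordinate of the conjugacy, and your fixed point $u(p)=\sum_{n\ge 0}d^{-(n+1)}\log a(\tau^n(p))$ is exactly the logarithm of the paper's explicit solution $\lambda(p)=\prod_{i\ge 0}a(\tau^i(p))^{1/d^{i+1}}$, with the hypothesis $p_i\ge\varepsilon$ used in the same way to bound $\log a$. The contraction-mapping packaging adds only uniqueness of the continuous solution, which the statement does not require.
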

\begin{proof}
Let us define $\Lambda(p,z)= (p, \lambda(p) z)$.
The conjugacy equation is given by
$
\Lambda\circ P(p,z)=\widehat P\circ\Lambda(p,z).$
Thus, by identifying the second term of this equation, we obtain for all $(p,z)\in X\times \mathbb{C}$:
$$
\lambda(\tau(p)) (a(p)z^d+b(p)) = \lambda^d(p) z^d+c(p) \, .
$$
This provides
\begin{eqnarray}
\lambda(\tau(p)) a(p) & = & \lambda(p)^d\label{lambda} \, ,\\
\lambda(\tau(p)) b(p) & =& c(p) \, .
\end{eqnarray}
To solve the first equation, we simply define $\displaystyle \lambda(p)=\prod_{i=0}^{+\infty}a(\tau^i(p))^{1/d^{i+1}}.$
This infinite product is convergent since $1 \leq a(p)=\frac{1}{p_1}\leq \frac{1}{\varepsilon}$ and then
$$
0\leq \sum_{i=0}^{+\infty}\frac{1}{d^{i+1}}\log(a(\tau^i(p))\leq \frac{1}{d-1}\log( \frac{1}{\varepsilon}) <+\infty \, .
$$
Clearly, $\lambda$ satisfies equation~(\ref{lambda}) and
$p\mapsto \lambda(p)$ is continuous on $X$ with $1\leq \lambda(p)\leq \displaystyle \Big( \frac1\varepsilon \Big)^{\frac{1}{d-1}}$.
\end{proof}

Thus after conjugacy we can assume that the fibered polynomial takes the form $\widehat  P(p,z)=(\tau(p),z^d+c(p)).$
Given a sequence of numbers $(p_i)\in [\varepsilon,1]$, by the formulas of the previous
proposition, we have
$$
\lambda(p)=\prod_{i=1}^{+\infty}\left( \frac1{p_i}\right)^{1/d^{i}}
$$
and
$$
c(p)=b(p)\lambda(\tau(p))=-\frac{1-p_1}{p_1} \prod_{i=1}^{+\infty}\left( \frac1{p_{i+1}}\right)^{1/d^{i}} \, .
$$

To avoid heavy notation, we still denote $P(p,z)$ the fibered polynomial $\widehat  P=(\tau(p),z^d+c(p))$ and $P^n(p,z)=(\tau^n(p), P^n_p(z))$.
Now we can follow \cite{ses} and verify that most of the results there are still valid in our context.

The global filled-in Julia sets are defined by
$$
 {\bf K} = \Big\{(p,z) \in X\times \mathbb{C} \mbox{ such that }
\sup_{n\in \N}| P_p^{n}(z)| <+\infty \Big\},
$$
and for all $p\in X$
$$
K_{p}= \big\{z \in \mathbb{C} \mbox{ such that } (p,z)\in K \big\} \, .
$$
One has that $P({\bf K})={\bf K}$, $P_p(K_p)=K_{\tau(p)}$, ${\bf K}$ is a compact subset of $X\times \mathbb{C}$ and
$K_p$ is  a compact subset of $\mathbb{C}$ bounded by $1+\max_{p\in X}|c(p)|$. The fibered Julia sets are the topological boundaries $J_p=\partial K_p$, and we let $
{\bf J}=\overline{\bigcup_{p\in X}\{p\}\times J_{p}}.$

We also consider the Green function of $P$ defined by
$$
G(p,z)=G_p(z):=\lim_{n\ra +\infty } \frac{1}{d^n} \log_+|P_p^n(z)| \, ,
$$
where $\log_+(p):=\max\{0,\log(p)\}.$
Then one has Proposition 2.4 in section 2.3 of \cite{ses}:

\begin{proposition} \label{green}
The map $G:X \times \mathbb{C} \rightarrow \mathbb{R}^+$ is continuous and satisfies:
\begin{enumerate}
\item $G_p$ is harmonic in $\mathbb{C}\setminus K_p$ and $ K_p$ is exactly
the set of $z\in\mathbb{C}$ such that $G_p(z)=0$~;
\item $G$ satisfies the functional equation :
$G(P(p,z))=d G(p,z)$;
\item
there exist constants $A$ and $B$ such that for all  $|z|\geq B$,
\begin{equation}
\sup_{p\in X}{\Big|G_{p}(z)-\log|z|\Big|}\leq {A\over{|z|^2}} \, .
\label{tangence-log}
\end{equation}
\end{enumerate}
\end{proposition}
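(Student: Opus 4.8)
The plan is to run the classical construction of the Green function of a (fibered) polynomial, following \cite{ses}, while tracking that every estimate is uniform in the parameter $p$; this uniformity is exactly what compactness of $X$ provides, since then $M:=\max_{p\in X}|c(p)|<+\oo$ by continuity of $c$. Write $u_n(p,z):=\frac{1}{d^n}\log_+|P^n_p(z)|$, so that by definition $G=\lim_n u_n$ pointwise, and note $u_n\ge 0$. The key step is to upgrade this to \emph{uniform} convergence on $X\times\C$. Setting $w:=P^n_p(z)$ and $q:=\tau^n(p)$, the skew-product identity $P^{n+1}_p=P_{\tau^n(p)}\comp P^n_p$ (a consequence of (\ref{Pn})) gives
$$
u_{n+1}(p,z)-u_n(p,z)=\frac{1}{d^{n+1}}\Big(\log_+|w^d+c(q)|-d\log_+|w|\Big).
$$
Because $d\log_+|w|=\log_+(|w|^d)=\log_+|w^d|$ and $t\mapsto\log_+t$ is $1$-Lipschitz on $[0,+\oo)$ (with $\log_+0:=0$), the right-hand side is bounded in absolute value by $\frac{1}{d^{n+1}}\bigl|\,|w^d+c(q)|-|w^d|\,\bigr|\le M\,d^{-(n+1)}$, uniformly in $(p,z)$. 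Hence $\|u_{n+1}-u_n\|_\oo\le M\,d^{-(n+1)}$, the telescoping series converges uniformly, $G$ is a uniform limit of continuous functions and is therefore a continuous map $X\times\C\ra\R^+$, with $\|G-u_n\|_\oo\le M/(d^n(d-1))$. The functional equation of item (2) is then immediate: using $P^n_{\tau(p)}\comp P_p=P^{n+1}_p$,
$$
G(P(p,z))=G_{\tau(p)}(P_p(z))=\lim_n\frac{1}{d^n}\log_+|P^n_{\tau(p)}(P_p(z))|=\lim_n d\cdot\frac{1}{d^{n+1}}\log_+|P^{n+1}_p(z)|=d\,G_p(z).
$$

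For item (3), fix $R_0\ge 1$ large enough that, for every $p\in X$, $|z|\ge R_0$ implies both $|P_p(z)|\ge|z|^d-M\ge 2|z|$ and $|c(p)|/|z|^d\le 1/2$. For $|z|\ge R_0$ all iterates escape, $|P^n_p(z)|\ge|z|$, the ``$\log_+$'' may be dropped, and telescoping the identity $\log|P^{n+1}_p(z)|=d\log|P^n_p(z)|+\log\bigl|1+c(\tau^n(p))(P^n_p(z))^{-d}\bigr|$ yields
$$
G_p(z)=\log|z|+\sum_{n\ge 0}\frac{1}{d^{n+1}}\log\Big|1+\frac{c(\tau^n(p))}{(P^n_p(z))^{d}}\Big|.
$$
Using $|\log|1+\zeta||\le 2|\zeta|$ for $|\zeta|\le 1/2$ together with $|P^n_p(z)|\ge|z|$, each summand is at most $2M\,d^{-(n+1)}|z|^{-d}$, so the whole sum is $O(|z|^{-d})=O(|z|^{-2})$ uniformly in $p$ (here $d\ge 2$ is used). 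This is precisely (\ref{tangence-log}) with $B=R_0$ (enlarged if necessary, see below) and a suitable $A$.

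For item (1), if $z\in K_p$ the orbit $(P^n_p(z))_n$ is bounded, so $\log_+|P^n_p(z)|$ is bounded and $G_p(z)=0$. Conversely, if $z\notin K_p$ then $\sup_n|P^n_p(z)|=+\oo$, so $|P^N_p(z)|\ge B$ for some $N$, and iterating the functional equation together with (3) gives $G_p(z)=d^{-N}G_{\tau^N(p)}(P^N_p(z))\ge d^{-N}(\log B-A/B^2)>0$, after enlarging $B$ so that $B^2\log B>A$. Hence $K_p=\{z:G_p(z)=0\}$, which is closed (by continuity of $G_p$) and bounded (by the asymptotics of (3)), and $\C\setminus K_p=\{G_p>0\}$ is open. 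On this open set, near any $z_0\notin K_p$ there are $N$ and a neighbourhood $U$ of $z_0$ with $|P^n_p(z)|\ge B$ for all $z\in U$ and $n\ge N$; for $n\ge N$ the function $u_n(p,\cdot)=d^{-n}\log|P^n_p(\cdot)|$ is harmonic on $U$ (its argument is a non-vanishing holomorphic function), and these converge uniformly on $U$ to $G_p$, which is therefore harmonic there --- equivalently $G_p=d^{-N}(G_{\tau^N(p)}\comp P^N_p)$ is a harmonic function composed with a holomorphic one.

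The only genuinely delicate point is the uniformity in $p\in X$, and the $1$-Lipschitz bound on $\log_+$ above is what makes the uniform convergence of the $u_n$ on all of $X\times\C$ (not merely near $\infty$) costless, given $\|c\|_\oo<+\oo$. With that in hand, continuity, the functional equation, the asymptotics at $\infty$, the identity $K_p=G_p^{-1}(0)$, and harmonicity of $G_p$ on $\C\setminus K_p$ follow by the standard arguments of \cite{ses}, which go through in our setting verbatim.
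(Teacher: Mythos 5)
Your proof is correct. The paper does not actually prove this proposition --- it imports it verbatim as Proposition 2.4 of \cite{ses} --- and your argument is exactly the standard construction used there: uniform convergence of $u_n(p,z)=\frac{1}{d^n}\log_+|P^n_p(z)|$ via the telescoping estimate $\|u_{n+1}-u_n\|_\infty\le \|c\|_\infty d^{-(n+1)}$ (uniformity in $p$ coming from compactness of the parameter space and continuity of $c$), then the functional equation, the escape-region expansion for the $\log|z|$ asymptotics, and harmonicity as a uniform limit of $d^{-n}\log|P^n_p|$ off $K_p$, carried over routinely from degree $2$ to degree $d$.
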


\medskip

In section $2.4$ of \cite{ses}, the continuity of the $K_p$ and $J_p$ with respect to $p$ is considered, see also \cite{d}. Let us denote $\mbox{Comp}(\C)$ the set of non-empty compact subsets of $\C$ endowed with the Hausdorff distance. Then,
by Proposition 2.9 in \cite{ses}, we have the following result.
\begin{proposition}\label{continuity}
1) The map $p\in X \mapsto K_p\in \mbox{Comp}(\C)$ is upper semi-continuous.\\
2) The map $p\in X \mapsto J_p\in \mbox{Comp}(\C)$ is  lower  semi-continuous.\\
\end{proposition}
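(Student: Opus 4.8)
For part~(1) and part~(2) alike, the plan is to reduce everything to the continuity of the Green function $G$ (Proposition~\ref{green}), describing $K_p$ as the zero set $\{G_p=0\}$ and $J_p$ as the support of the equilibrium measure $\mu_p:=\tfrac1{2\pi}\Delta G_p$ (distributional Laplacian).

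For part~(1), recall from Proposition~\ref{green}(1) that $K_p=\{z\in\mathbb{C}:G_p(z)=0\}$ and that the fibers are uniformly bounded, $K_p\subset\overline{\mathbb{D}(0,R_*)}$ with $R_*:=1+\max_{p\in X}|c(p)|$. Given $p_0\in X$ and an open set $V\supset K_{p_0}$, I would argue by contradiction: if there were $p_n\to p_0$ and $z_n\in K_{p_n}\setminus V$, the uniform bound lets us extract a subsequence with $z_n\to z_\infty\notin V$, and continuity of $G$ gives $G_{p_0}(z_\infty)=\lim_n G_{p_n}(z_n)=0$, so $z_\infty\in K_{p_0}\subset V$, a contradiction. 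This is exactly upper semicontinuity of $p\mapsto K_p$; equivalently it is the statement that $\mathbf K=G^{-1}(0)$ is closed --- already recorded in the text --- together with the local uniform boundedness of the fibers.

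For part~(2), note first that each $G_p$ is a nonnegative continuous subharmonic function on $\mathbb{C}$, harmonic off $K_p$, with $|G_p(z)-\log|z||\le A/|z|^2$ for $|z|\ge B$ by Proposition~\ref{green}(3); hence $\mu_p:=\tfrac1{2\pi}\Delta G_p$ is a well-defined probability measure. Two facts are then needed. First, $\operatorname{supp}(\mu_p)=J_p$: this is the standard potential-theoretic description of the equilibrium measure of a compact set, valid here because $\mathbb{C}\setminus K_p=\{G_p>0\}$ is connected --- it is the escaping set (basin of infinity) of the fibered polynomial, and the classical argument (each fiber map $P_q$ is a degree-$d$ polynomial, so the preimage under $P_q$ of a connected neighbourhood of $\infty$ containing $\infty$ is again connected, since $\infty$ has only itself as preimage) shows it is an increasing union of connected open sets; consequently $K_p$ has no bounded complementary component and the equilibrium measure is carried exactly by $\partial K_p=J_p$. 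Second, $p\mapsto\mu_p$ is weak-$*$ continuous: for $\varphi\in C_c^\infty(\mathbb{C})$ one has $\langle\mu_p,\varphi\rangle=\tfrac1{2\pi}\int_{\mathbb{C}}G_p\,\Delta\varphi\,dA$, and since $G$ is continuous and the $G_p$ are locally uniformly bounded (Proposition~\ref{green}), dominated convergence gives continuity in $p$. Finally, let $w\in J_{p_0}=\operatorname{supp}(\mu_{p_0})$ and $p_n\to p_0$; for any $\varepsilon>0$ the Portmanteau inequality on the open ball yields $\liminf_n\mu_{p_n}(\mathbb{D}(w,\varepsilon))\ge\mu_{p_0}(\mathbb{D}(w,\varepsilon))>0$, so $J_{p_n}\cap\mathbb{D}(w,\varepsilon)\ne\emptyset$ for all large $n$; since every open set meeting $J_{p_0}$ contains such a ball, this is precisely lower semicontinuity of $p\mapsto J_p$.

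The main obstacle is the lower semicontinuity in part~(2). It cannot be extracted from the continuity of $G$ and the description $J_p=\partial\{G_p>0\}$ alone: the naive attempt only produces, arbitrarily close to a given $w\in J_{p_0}$, points lying \emph{outside} $K_{p_n}$, whereas one also needs points \emph{of} $K_{p_n}$ near $w$, and this fails for general compact-valued maps. The resolution must exploit the special dynamical structure of $J_p$, which above is done through the equilibrium measure $\mu_p$ and its weak-$*$ continuity; the classical alternative via density of repelling periodic points and the implicit function theorem would require a dense set of $\tau$-periodic base points, which is why the measure-theoretic route is preferable in this fibered setting. The remaining ingredients --- the compactness argument for part~(1), the identification $\operatorname{supp}\mu_p=J_p$, and the weak-$*$ continuity of $\mu_p$ --- are routine given Proposition~\ref{green}, and the whole argument parallels that of Proposition~2.9 in~\cite{ses}.
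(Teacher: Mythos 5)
Your argument is correct, but it is worth saying up front that the paper does not actually prove this Proposition: it simply quotes Proposition 2.9 of \cite{ses} (see also \cite{d}), so your proposal supplies an argument where the paper relies on a citation. Part (1) is exactly the standard compactness argument (uniform boundedness of the fibers, continuity of $G$, closedness of $\{G=0\}$) and is fine. For part (2) you correctly identify the real difficulty and resolve it by a potential-theoretic route: $\mu_p=\frac{1}{2\pi}\Delta G_p$ is a probability measure (logarithmic growth of $G_p$ from Proposition \ref{green}(3)), $\operatorname{supp}\mu_p=J_p$, weak-$*$ continuity of $p\mapsto\mu_p$ via $\langle\mu_p,\varphi\rangle=\frac{1}{2\pi}\int G_p\,\Delta\varphi$, and Portmanteau. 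This is a complete and legitimate proof; the classical alternative (used in the Douady--Sester tradition) is more elementary and avoids measures altogether: if $w\in J_{p_0}$ and $\mathbb{D}(w,\varepsilon)$ missed $J_{p_n}$ for some $p_n\ra p_0$, the disk would lie either in $K_{p_n}$ (then $G_{p_n}\equiv 0$ there, and continuity of $G$ forces $G_{p_0}\equiv 0$ on the disk, contradicting $w\in\partial K_{p_0}$) or in $\C\setminus K_{p_n}$ (then $G_{p_n}$ is positive harmonic on the disk, and Harnack's inequality together with $G_{p_n}(w)\ra G_{p_0}(w)=0$ forces $G_{p_n}\ra 0$ near $w$, contradicting $G_{p_0}(z_2)>0$ at some nearby $z_2$). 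Your measure-theoretic route buys a stronger by-product (continuity of the equilibrium measures), the Harnack route buys brevity; both rest only on Proposition \ref{green}.

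One justification in your write-up should be tightened. You derive $J_p\subseteq\operatorname{supp}\mu_p$ from ``complement connected $\Rightarrow$ the equilibrium measure is carried exactly by $\partial K_p$.'' Connectedness of $\C\setminus K_p$ alone does not give this: for a general compact set with connected complement the equilibrium measure can miss irregular boundary points (a segment together with an isolated point is the standard example), so ``carried by the boundary'' is weaker than ``support equals the boundary.'' What saves you here is Proposition \ref{green}(1): $G_p$ is continuous, nonnegative, and vanishes \emph{exactly} on $K_p$. Hence if $G_p$ were harmonic on a neighbourhood of $w\in J_p$, the minimum principle (with $G_p(w)=0$, $G_p\ge 0$) would force $G_p\equiv 0$ there, putting $w$ in the interior of $K_p$, a contradiction; so every point of $J_p$ lies in $\operatorname{supp}\Delta G_p$. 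With this one-line repair (and the routine remark that $G_p$ is subharmonic on all of $\C$ as a locally uniform limit of the subharmonic functions $d^{-n}\log_+|P^n_p|$), your proof is complete.
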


\begin{theorem}\label{quasicircle}
Let $P(p,z)=(\tau(p),z^d+c(p))$ be a fibered polynomial over $(X,\tau)$.
If the continuous map $c: X\mapsto \mathbb{C}$ satisfies, $\max |c|<(\frac{1}{2})^{\frac{d}{d-1}}$ then
there exists $k>1$ such that the Julia sets $J_p$ are $k$-quasicircles,
i.e. $J_p$ is the image of the unit circle by a $k$-quasiconformal map.
\end{theorem}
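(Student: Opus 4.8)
The plan is to deduce the quasicircle property from uniform fibered hyperbolicity together with the $\lambda$-lemma for holomorphic motions; the precise bound $\max|c| < (\tfrac12)^{d/(d-1)}$ is used only in the first (hyperbolicity) step, and the uniformity of the dilatation $k$ will come for free from the $\lambda$-lemma.

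First I would establish a uniform trapping estimate. Write $M := \max_{p}|c(p)| < r := (\tfrac12)^{d/(d-1)}$ and let $s^* := d^{-1/(d-1)}$ be the positive maximizer of $s\mapsto s-s^d$. A direct computation (equivalent to $(d/2)^d\le (d-1)^{d-1}$) gives $r \le s^* - (s^*)^d = \max_s(s-s^d)$, with equality only for $d=2$; hence $M < s^*-(s^*)^d$ and the equation $s-s^d=M$ has roots $s_-<s^*<s_+$. Fixing any $s'\in(s^*,s_+)$ one has $(s')^d+M\le s'$, so $P_p(\overline{\mathbb{D}(0,s')})\subset\overline{\mathbb{D}(0,s')}$ for every $p$, where $P_p(z)=z^d+c(p)$. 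Thus the forward orbit of the unique finite critical point $0$ is trapped in $\overline{\mathbb{D}(0,s')}\subset\mathrm{int}\,K_p$, so each $K_p$ is connected and full, while on the neighborhood $\{|z|\ge s'\}$ of $J_p$ one has $|P_p'(z)|=d|z|^{d-1}\ge d(s')^{d-1}>d(s^*)^{d-1}=1$. This yields uniform expansion $|P_p'|\ge\kappa_0>1$ near every $J_p$, with $s',\kappa_0$ independent of $p$. Since $0\in\mathrm{int}\,K_p$, the Green function of Proposition \ref{green} has no critical point in $\C\setminus K_p$, so integrating $\partial(G_p+i\widetilde G_p)$ produces a conformal isomorphism $\phi_p:\C\setminus K_p\to\C\setminus\overline{\mathbb{D}(0,1)}$ with the fibered Böttcher relation $\phi_{\tau(p)}\circ P_p=(\phi_p)^d$ and the uniform control near infinity given by part (3) of Proposition \ref{green}.

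The heart of the argument is to realize all the $J_p$ simultaneously as terminal slices of a holomorphic motion. Consider the enlarged family $P_p^\lambda(z)=z^d+\lambda\,c(p)$ for $\lambda\in\mathbb{D}_\Lambda:=\mathbb{D}(0,\Lambda)$ with $\Lambda:=r/M>1$. For every $|\lambda|<\Lambda$ we have $\max_p|\lambda c(p)|<r$, so the previous step applies verbatim: the fibered system is uniformly hyperbolic with connected fibers $K_p^\lambda$, and the maps, the Böttcher coordinates $\phi_p^\lambda$ and their inverses depend holomorphically on $(\lambda,z)$. At the basepoint $\lambda=0$ we have $P_p^0(z)=z^d$, hence $K_p^0=\overline{\mathbb{D}(0,1)}$ and $J_p^0=\{|w|=1\}$ for every $p$. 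Uniform hyperbolicity lets me extend $(\phi_p^\lambda)^{-1}$ continuously to the boundary circle and define $\Phi_p^\lambda(w):=(\phi_p^\lambda)^{-1}(w)$ for $|w|=1$; propagating along the $\tau$-orbit through the conjugacy $\phi_{\tau(p)}^\lambda\circ P_p^\lambda=(\phi_p^\lambda)^d$ and using the uniform expansion $\kappa_0$ to control convergence, I would verify that $\lambda\mapsto\Phi_p^\lambda(w)$ is holomorphic for each fixed $w$ and that $\Phi_p^\lambda$ is injective with image $J_p^\lambda$. Thus $\Phi_p^\lambda$ is a holomorphic motion of the unit circle over $\mathbb{D}_\Lambda$.

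Finally, the $\lambda$-lemma of Mañé, Sad and Sullivan together with Slodkowski's extension theorem shows that this motion extends to a holomorphic motion of $\overline{\C}$ and that for each $\lambda$ the map $\Phi_p^\lambda$ is quasiconformal with dilatation at most $(\Lambda+|\lambda|)/(\Lambda-|\lambda|)$. Evaluating at $\lambda=1$, which is legitimate since $1<\Lambda$, gives that $J_p=J_p^1=\Phi_p^1(\{|w|=1\})$ is the image of the unit circle under a $k$-quasiconformal homeomorphism with
\[
k\ \le\ \frac{\Lambda+1}{\Lambda-1}\ =\ \frac{r+M}{r-M},
\]
a constant independent of $p$; hence all $J_p$ are $k$-quasicircles with one common $k$. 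I expect the main obstacle to be the holomorphic-motion step: in this genuinely non-autonomous setting there are no periodic points to anchor the motion, so the holomorphicity and injectivity of $\lambda\mapsto\Phi_p^\lambda(w)$ must be extracted from the fibered Böttcher coordinates alone, and every estimate — the boundary extension of $\phi_p^\lambda$, the expansion constant $\kappa_0$, and the constants $A,B$ of Proposition \ref{green} — must be shown uniform over all $p\in X$. This uniformity is precisely what rests on the standing assumption $\varepsilon\le p_n\le 1$ together with $\max|c|<r$.
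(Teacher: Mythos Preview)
Your approach is correct in outline but genuinely different from the paper's. The paper proceeds by \emph{quasiconformal surgery}: it first observes that the disk $\mathbb{D}(0,r)$ with $r=(\tfrac12)^{1/(d-1)}$ is mapped compactly into itself by every $P_p$, then builds a $K$-quasiconformal modification $\eta_p$ supported on the annulus $A(p)=P_p^{-1}(\mathbb{D}(0,r))\setminus\overline{\mathbb{D}(0,r)}$ sending the critical value $c(p)$ to $0$, defines $\widetilde P_p=\eta_p\circ P_p$, pulls back an invariant Beltrami coefficient $\mu_p$ along orbits, and solves the Beltrami equation to obtain a global $k$-quasiconformal $\psi_p$ with $\psi_{\tau(p)}\circ\widetilde P_p\circ\psi_p^{-1}(z)=z^d$; since $\widetilde P_p=P_p$ outside $V_p$, the map $\psi_p^{-1}$ carries the unit circle onto $J_p$. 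The only distortion is introduced in a single annulus, so $\|\mu_p\|_\infty$ is uniformly bounded, which gives the uniform $k$.

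Your route via holomorphic motions and the $\lambda$-lemma is a legitimate alternative. The embedding $P_p^\lambda(z)=z^d+\lambda c(p)$ with $\lambda\in\mathbb{D}(0,\Lambda)$, $\Lambda=r/M$, is natural, and the trapping/expansion estimates you give are correct (including the computation $r\le s^*-(s^*)^d$). What you gain is an explicit dilatation bound $k\le(\Lambda+1)/(\Lambda-1)=(r+M)/(r-M)$, which the surgery argument does not provide directly. What you pay is the need for Slodkowski's extension theorem (a deeper result than the Measurable Riemann Mapping Theorem used in the paper) and, more importantly, the verification that $\lambda\mapsto\Phi_p^\lambda(w)$ really is a holomorphic motion of the circle in this non-autonomous setting. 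You correctly flag this as the main obstacle. The cleanest way to carry it out is not through boundary extension of B\"ottcher (which presupposes that $J_p^\lambda$ is a Jordan curve, essentially what you want to prove) but through symbolic coding: for $w=e^{2\pi i\theta}$, define $\Phi_p^\lambda(w)$ as the nested intersection of inverse branches of $P_p^n$ along the $d$-adic itinerary of $\theta$; uniform expansion gives exponential shrinking uniformly in $\lambda$, hence a locally uniform limit of holomorphic functions of $\lambda$, and distinct itineraries give disjoint branches, hence injectivity. With that step made precise, your argument goes through.
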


Concerning quasiconformal mapping and quasiconformal circles we refer to Ahlfors~(\cite{Ahl}) where a lot
of characterizations and properties are provided.

Here we need to adapt the results of~\cite{ses} where only the degree $d=2$ was considered.
\begin{proof}
The first step is to find an attracting domain. Let $r:=(\frac{1}{2})^{\frac{1}{d-1}}$, we claim that the
image $P_p(\mathbb{D}(0,r))$ is compactly contained in $\mathbb{D}(0,r)$. Indeed, for all $|z|\leq r$
$$
| P_p(z)| \leq |z|^d+ |c(p)|< r^d+ \left(\frac{1}{2}\right)^{\frac{d}{d-1}}
= \left(\frac{1}{2}\right)^{\frac{d}{d-1}}+\left(\frac{1}{2}\right)^{\frac{d}{d-1}}=\left(\frac{1}{2}\right)^{\frac{1}{d-1}}.
$$
Next, we will check that this property is sufficient to guarantee that the Julia set is a "uniform" quasi-circle. We follow the main lines in \cite{ses} see also \cite{cg}.

Let us denote $V_p=P_p^{-1}(\mathbb{D}(0,r))$ and  $A(p)=V_p\setminus \overline{\mathbb{D}(0,r)}$. Note that $V_p$ is a simply connected domain and thus
$A(p)$ is an annulus. Since the map $(p,z)\in X\times \mathbb{D}(0,r) \rightarrow P_p(z)$ is continuous, by compactness of $X$ one has that the modulus
of $A(p)$ is bounded from below.\\
According to lemma (5.5) in \cite{ses}, there exists
$K>0$ and, for all $p\in X$, a $K$-quasiconformal diffeormorphism $\eta_p:\C\rightarrow\C$ such that
 $\eta_p=Id$ on the complement of $V_p$, $\eta_p$ is holomorphic on $\mathbb{D}(0,r)$ and satisfies $\eta_p(c(p))=0$.

Let us define $\widetilde P_p=\eta_p\circ P_p$ and
$$
\widetilde P^n_p=\widetilde P_{\tau^{n-1}(p)}\circ\ldots\circ\widetilde P_p \, .
$$
We also consider an ellipse field $\sigma_p$ by making it circle on $\mathbb{D}(0,r)\cup (\C\setminus K_p)$ and to be invariant under
$\widetilde P_p$ on
$$
\bigcup_{n=0}^\infty (\widetilde P^n_p)^{-1}(A(\tau^n(p)).
$$
The crucial point is that the sets $(\widetilde P^n_p)^{-1}(A(\tau^n(p))$ for $n\in\N$ are disjoint. 
Thus $\sigma_p$ is well defined and there is only one distortion in the first iteration since $\widetilde P_p$ is analytic everywhere except on $A(p)$.
Let us denote $\mu_p$ the Beltrami coefficient of  $\sigma_p$, and define $\mu_p=0$ on any remaining part of $\C$. Hence
$$
\|\mu_p\|_\infty\leq k<1.
$$
Let $\psi_p$ be the solution of the associated Beltrami equation $\overline{  \psi_p}=\mu_p \psi_p$ with the normalization $\psi_p(0)=0$ and $\psi_p$ is tangent to the identity at
infinity. It is a $k$-quasiconformal map and by construction, $\psi_{\tau(p)}\circ\widetilde P_p \circ \psi_p^{-1}$  turns out to be an analytic map of degree $d$, with the critical point of order $d-1$ at 0 and also
critical value 0. Thus, $\psi_{\tau(p)}\circ\widetilde P_p \circ \psi_p^{-1}(z)=z^d$ for all $z\in \C.$ But recall that $\widetilde P_p=P_p$ on the complement of $V_p$. This yields that
$$
P_p(\psi^{-1}_p(z))=\psi^{-1}_{\tau(p)}(z^d)\, , \ \forall \, |z|\geq r_0 \, ,
$$
and that $J_p$ is the image of the unit circle by the $k$-quasiconformal map $\psi^{-1}_p$.
\end{proof}

\begin{corollary}\label{quasicircle2}
There exist $0<\rho<1$ such that whenever $p_i\in[\rho,1] $ for all $i\geq 2$, then
there exist $\kappa>1$ such that $J_p$ is a $k$-quasicircle. Actually, we can take $\rho=2(\sqrt{2}-1) \approx 0.828$.
\end{corollary}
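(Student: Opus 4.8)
The plan is to read the corollary off Theorem~\ref{quasicircle}. Recall from (\ref{if}) that, since $\liminf_i p_i\ge\rho>0$, one has $\partial E=h_1^{-1}(J_{\tau(\bar p)})$ with $h_1$ a linear homeomorphism of $\mathbb{C}$; as the quasicircle property is invariant under affine maps, it suffices to show that the fibered Julia sets $J_p$ of the (conjugated) fibered polynomial $P(p,z)=(\tau(p),z^d+c(p))$ over the compact $\tau$-invariant space $[\rho,1]^{\N}$ are uniform $\kappa$-quasicircles. By Theorem~\ref{quasicircle} this is guaranteed once $\|c\|_\infty=\sup_{p\in[\rho,1]^{\N}}|c(p)|<(1/2)^{d/(d-1)}$, so the whole argument reduces to an estimate on $c$.

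First I would estimate $\|c\|_\infty$. By Proposition~\ref{conjugacy}, $c(p)=-\frac{1-p_1}{p_1}\prod_{i=1}^{+\infty}(1/p_{i+1})^{1/d^{i}}$, and since $t\mapsto(1-t)/t$ and $t\mapsto 1/t$ are decreasing on $(0,1]$ the supremum over $[\rho,1]^{\N}$ is attained at the constant sequence $p_i\equiv\rho$; hence $\|c\|_\infty=\frac{1-\rho}{\rho}\,\rho^{-1/(d-1)}=(1-\rho)\,\rho^{-d/(d-1)}$. The required inequality thus reads $(1-\rho)\,\rho^{-d/(d-1)}<2^{-d/(d-1)}$, i.e. $1-\rho<(\rho/2)^{d/(d-1)}$. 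The function $\rho\mapsto(\rho/2)^{d/(d-1)}-(1-\rho)$ is strictly increasing on $(0,1)$, negative near $0$, positive at $1$, so it has a unique zero $\rho_d$ and the inequality holds for $\rho>\rho_d$. Evaluating at $\rho_0=2(\sqrt2-1)$ one has $1-\rho_0=3-2\sqrt2=(\sqrt2-1)^2$ and $(\rho_0/2)^{d/(d-1)}=(\sqrt2-1)^{d/(d-1)}$; since $0<\sqrt2-1<1$ and $1<d/(d-1)\le 2$, we get $(\sqrt2-1)^{d/(d-1)}\ge(\sqrt2-1)^2$, so $\rho_d\le 2(\sqrt2-1)$ and Theorem~\ref{quasicircle} applies with $\rho=2(\sqrt2-1)$, giving the claimed uniform $\kappa$.

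The one place that needs extra care, and which I expect to be the main obstacle, is $d=2$, where the last inequality is an equality at $\rho_0$: one only obtains $\|c\|_\infty\le 1/4$, and the value $1/4$ is attained solely at the constant sequence $p_i\equiv 2(\sqrt2-1)$, for which (after conjugacy) $P_p(z)=z^2-\frac14$. At that point the attracting-domain step in the proof of Theorem~\ref{quasicircle} is exactly non-strict, so I would instead invoke that $z^2-\frac14$ has an attracting fixed point and hence a quasicircle Julia set, and recover a uniform $\kappa$ on $[2(\sqrt2-1),1]^{\N}$ from this together with the semicontinuity of Proposition~\ref{continuity} and compactness of $[2(\sqrt2-1),1]^{\N}$ (equivalently: for every $\rho>2(\sqrt2-1)$ the previous paragraph applies verbatim, so $2(\sqrt2-1)$ is the sharp threshold). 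For $d\ge 3$ no such issue arises and the corollary is an immediate numerical consequence of Theorem~\ref{quasicircle} and the monotone bound on $|c|$.
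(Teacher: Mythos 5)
You follow the same route as the paper: reduce Corollary~\ref{quasicircle2} to the hypothesis $\sup_p|c(p)|<(1/2)^{d/(d-1)}$ of Theorem~\ref{quasicircle}, using the formula for $c(p)$ from Proposition~\ref{conjugacy} and monotonicity in each $p_i$, then evaluate at $\rho=2(\sqrt 2-1)$. Your computation is in fact sharper than the paper's: instead of the crude chain $(\frac1\rho-1)(\frac1\rho)^{1/(d-1)}\le(\frac1\rho-1)\frac1\rho\le\frac14$, you compute the exact supremum $(1-\rho)\rho^{-d/(d-1)}$, attained at the constant sequence, and you correctly isolate the only delicate configuration: $d=2$ with $p_i\equiv 2(\sqrt2-1)$, where $\sup|c|=\frac14=(1/2)^{d/(d-1)}$ and the strict hypothesis of Theorem~\ref{quasicircle} is not met. (The paper's own proof is loose exactly there: the asserted inequality $\frac14<(\frac12)^{d/(d-1)}$ is an equality when $d=2$.) For $d\ge 3$ your argument is complete and essentially identical to the paper's; the preliminary discussion of $h_1$ and $\partial E$ is not needed for the corollary as stated, but it is harmless.

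For $d=2$, however, your patch is not yet a proof. Proposition~\ref{continuity} gives only Hausdorff semicontinuity of $p\mapsto K_p$, $J_p$; it does not transport a quasiconformal constant from the single autonomous map $z^2-\frac14$ to the nearby non-autonomous fibers, and the parenthetical ``for every $\rho>2(\sqrt2-1)$ the argument applies verbatim'' proves a strictly weaker statement (it neither covers sequences attaining the value $2(\sqrt2-1)$ nor yields a $\kappa$ independent of $\rho$). The true breakdown at the boundary value is in the first step of the proof of Theorem~\ref{quasicircle}: with $r=\frac12$ one only gets $P_p(\overline{\mathbb{D}(0,r)})\subset\overline{\mathbb{D}(0,r)}$, not compact containment. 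A genuine repair stays inside that proof: for $c_1,c_2\in[-\frac14,0]$ the two-step map $z\mapsto (z^2+c_1)^2+c_2$ sends $\overline{\mathbb{D}(0,\frac12)}$ into a compact subset of $\mathbb{D}(0,\frac12)$, uniformly in the parameters, because equality in the triangle inequality would force $z^2+c_1=\pm\frac{i}{2}$, which is impossible since $z^2+c_1\in\overline{\mathbb{D}(c_1,\frac14)}$ with $c_1$ real; one then runs the quasiconformal surgery for the degree-$d^2$ fibered polynomial $P^2$ over $(X,\tau^2)$, whose fibered filled-in Julia sets coincide with the $K_p$. With such an argument (or by simply demanding $\rho>2(\sqrt2-1)$ when $d=2$) your proof closes; the gap you flagged is real, and it is shared by the paper.
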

\begin{proof}
It suffices to prove that whenever $p_i\in[\rho,1] $ with $\rho$ suitably chosen
\begin{equation} \label{cardioid}
|c(p)|=\frac{1-p_1}{p_1} \prod_{i=1}^{+\infty}\left( \frac1{p_{i+1}}\right)^{1/d^{i}}<\left(\frac{1}{2}\right)^{\frac{d}{d-1}}.
\end{equation}
Assume $p_i\in[\rho,1]$ then $1\leq \frac{1}{p_i}  \leq \frac{1}{\rho}$ and
$$
|c(p)|\leq \left(\frac{1}{\rho}-1\right) \left( \frac{1}{\rho}\right)^{\frac{1}{d-1}}\leq \left(\frac{1}{\rho}-1\right) \left( \frac{1}{\rho}\right) \, .
$$
Since $\frac{1}{4}<\left(\frac{1}{2}\right)^{\frac{d}{d-1}}$,
 a sufficient condition that implies inequality (\ref{cardioid})
is
$$
\left(\frac{1}{\rho}-1\right) \left( \frac{1}{\rho}\right)\leq \frac1 4 \, .
$$
In particular when $\rho=2(\sqrt{2}-1)$ there is equality, thus (\ref{cardioid}) holds.
Then the statement follows from Theorem \ref{quasicircle}.
\end{proof}

\begin{figure}[htb]
\begin{center}
\includegraphics[width=10cm]{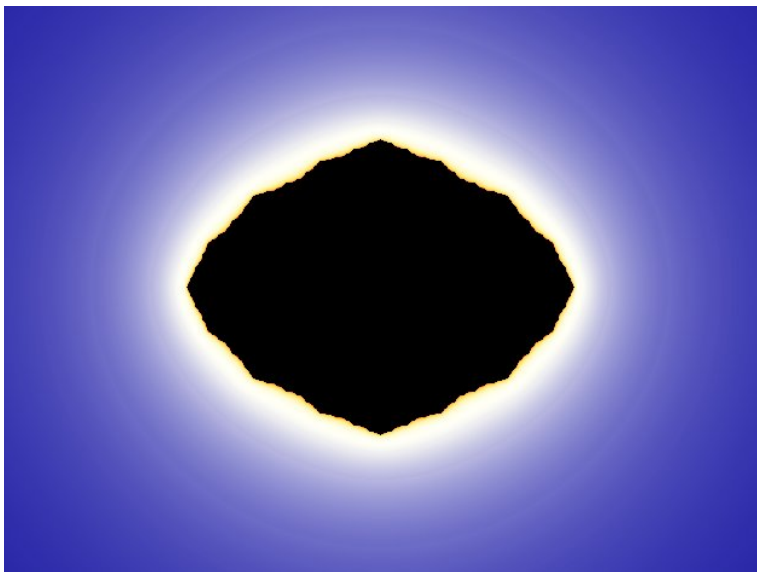}
\caption{A filled-in Julia set which is a quasi-disk. Here $(p_i)$ is a sequence of uniform random variables in $[0.83, 9]$ }
\label{Quasi-circle}
\end{center}
\end{figure}

\begin{remark}
To be more precise, associated to each $d$ there is a unique solution of the equation
$$
 \left(\frac{1}{\rho}-1\right) \left( \frac{1}{\rho}\right)^{\frac{1}{d-1}}=\left(\frac{1}{2}\right)^{\frac{d}{d-1}}
$$
which provides a value $0<\rho(d)<1$ a little more accurately.
\end{remark}
Finally, we can easily deduce the same result for the spectrum of the transition matrix $S_{d}$.
Indeed, in Theorem~\ref{spectrum} we have shown that the spectrum is exactly $E_{p}$, the set of points $z$ with bounded
orbit, i.e., such that the family ($\tilde f_{n}(z))_{n\in\N}$ is bounded. Then we conjugate $\tilde f_{n}$ to $\tilde g_{n}$ through affine maps:
$\tilde g_{n} \circ h_{1} = h_{n +1}\circ \tilde f_{n}.$
Finally, $\tilde g_{n}$ is conjugated to $P^{n}_p $ thanks to Proposition~\ref{conjugacy}. Thus we have that the following diagram is commutative:
$$\begin{CD}
\C @>\tilde f_{n}>> \C\\
@Vh_{1}VV @VVh_{n+1}V\\
\C@>\tilde g_{n}>> \C\\
@Vz\mapsto\lambda(p) z VV @VVz\mapsto\lambda(\tau^n(p)) zV\\
\C@> P^n_{p}>> \C
\end{CD}
$$
Due to the restriction $p\in [\varepsilon,1]^{\N}$, clearly
($\tilde f_{n}(z))_{n\in\N}$ is bounded if and only $(P^n_{p} (\lambda(p)h_{1}(z)))_{n\in\N}$ is bounded.
Let $\psi$ denote the inverse of $z\mapsto \lambda(p)h_{1}(z)$ which is also an affine map, it follows that
$E_{p}$ is the image of $K_{p}$ under  $\psi$. Thus the conclusions
of Proposition~\ref{continuity}, Theorem~\ref{quasicircle}, and the statement of Corollary~\ref{quasicircle2} also hold for $E_{p}$.

\section*{acknowledgments}
We are grateful to Simon Trevor Lloyd  who pointed out several mistakes in the first version and helped us to improve the readability of the final version. 
The authors would also like to thank the anonymous referee for his useful comments and remarks.

%

\end{document}